\newtheorem{theorem}{Theorem}
\newtheorem{example}{Example}
\newtheorem{remark}{Remark}
\newtheorem{proposition}{Proposition}
\newtheorem{definition}{Definition}
\newtheorem{lemma}{Lemma}
\newtheorem{corollary}{Corollary}
\newcommand{\NN}{\mathbb{N}}
\newcommand{\RR}{\mathbb{R}}
\newcommand{\II}{\mathbb{I}}
\newcommand{\QQ}{\mathbb{Q}}
\newcommand{\fd}{\mathbf{fd}}
\newcommand{\EE}{\mathbf{E}}
\newcommand{\A}{\mathcal{A}}
\newcommand{\B}{\mathcal{B}}
\newcommand{\C}{\mathcal{C}}
\newcommand{\D}{\mathcal{D}}
\newcommand{\E}{\mathcal{E}}
\newcommand{\F}{\mathcal{F}}
\renewcommand{\H}{\mathcal{H}}
\newcommand{\W}{\mathcal{W}}
\newcommand{\U}{\mathcal{U}}
\renewcommand{\L}{\mathcal{L}}
\newcommand{\G}{\mathcal{G}}
\renewcommand{\P}{\mathcal{P}}
\newcommand{\Q}{\mathcal{Q}}
\renewcommand{\u}{\underline}
\renewcommand{\o}{\overline}
\renewcommand{\j}{\mathbf{j}}
\newcommand{\dd}{\mathbf{d}}
\newcommand{\p}{\mathbf{p}}
\renewcommand{\d}{\mathrm{d}}
\renewcommand{\sc}{\mathbf{sc}}
\newcommand{\fsc}{\mathbf{fsc}}
\newcommand{\card}{\mathrm{card}\,}
\newcommand{\minimize}{\mathrm{minimize}}
\newcommand{\gn}{\subseteq_{GN}}
\newcommand{\superimpose}[2]{%
  {\ooalign{$#1\@firstoftwo#2$\cr\hfil$#1\@secondoftwo#2$\hfil\cr}}}
\newcommand{\intC}{\mathpalette\superimpose{{\hspace{0.3mm}\vspace{-0.2mm}\textsc{C}}{\int}}}
\newcommand{\intCs}{\mathpalette\superimpose{{\hspace{0.1mm}\textsc{c}}{\int}}}
\journal{arXiv.org}
\begin{document}

\begin{frontmatter}



\title{Envelopes of conditional probabilities extending a strategy and a prior probability}


\author[label1]{Davide Petturiti\corref{cor1}}
\ead{davide.petturiti@dmi.unipg.it}
\author[label2]{Barbara Vantaggi}
\ead{barbara.vantaggi@sbai.uniroma1.it}
\address[label1]{Dip.  Matematica e Informatica, Universit\`{a} di Perugia, Italy}
\address[label2]{Dip.  S.B.A.I., ``La Sapienza'' Universit\`{a} di Roma, Italy}
\cortext[cor1]{Corresponding author.}

\begin{abstract}
Any strategy and prior probability together are a coherent conditional probability that can be extended, generally not in a unique way, to a full conditional probability. The corresponding class of extensions is studied and a closed form
expression for its envelopes is provided.
Then a topological characterization of the subclasses of extensions satisfying the further properties of full disintegrability and full strong conglomerability is given and their envelopes are studied.
\end{abstract}

\begin{keyword} 
Finitely additive probability \sep
strategy \sep
coherence \sep
envelopes of conditional probabilities \sep
disintegrability \sep
conglomerability

\MSC[2010] 60A05 \sep 62C10 \sep 60A10
\end{keyword}

\end{frontmatter}

\section{Introduction}
In this paper we provide closed form expressions for the lower and upper envelopes of sets of full conditional probabilities extending a strategy and a prior probability. In the seminal paper \cite{dubins} by Dubins, the notion of {\it strategy} $\sigma$ together with the ensuing concepts of {\it disintegrability} and {\it (strong) conglomerability} with respect to a {\it  (finitely additive) prior probability} $\pi$ are presented and it is proved  that the assessment $\{\pi,\sigma\}$ can always be extended, generally not in a unique way, to a full conditional probability. The notion of coherence, essentially due to de Finetti \cite{definetti,williams,holzer,regazzini}, and the extension of an assessment  $\{\pi,\sigma\}$ are particularly meaningful in Bayesian statistics \cite{definetti,heath1978,regazzini-bayes,regazzini-unimi,brr-bayes,kss-fa-statistics} and also in limit theorems and stochastic processes 
\cite{ds-libro,dubins,dubins2,purves}.

The aforementioned papers generally focus on the existence of a particular conditional probability extending the assessment $\{\pi,\sigma\}$, while here the whole class of extensions is considered.
In detail, we provide a closed form expression for the envelopes
of the whole class of  full conditional probabilities extending a strategy  $\sigma$ and a prior probability  $\pi$ and furthermore, we focus on subclasses of extensions satisfying some additional analytical properties.
In turn, the properties of disintegrability and strong conglomerability are imposed on the extensions of $\{\pi,\sigma\}$  and are asked to hold with respect to a conditional prior probability extending $\pi$. Hence, the properties of {\it full disintegrability} and {\it full strong conglomerability} are derived, where the latter can be considered as a weakening of the former in case of non-integrability of $\sigma$.

The subclasses of fully disintegrable and fully strongly conglomerable extensions are characterized from a topological point of view, proving they are compact sets with respect to the product topology of pointwise convergence. This implies that the corresponding envelopes are actually attained pointwise by some extension in such subclasses.

The closed form expressions of the lower envelopes of such classes reveal that, on a specific subfamily of conditional events, they are totally monotone capacities. This determines a Choquet integral expression of the envelopes on another distinguished subfamily of conditional events. As a consequence, this also allows to compute as a Choquet integral the corresponding lower (upper) conditional previsions on a suitable class of conditional bounded random variables.

Notice that, since  a prior probability $\pi$ and a {\it statistical model} $\lambda$ give automatically rise to a unique pair $\{\pi,\sigma\}$ where $\sigma$ is a strategy, our results provide a generalized Bayesian updating rule. In particular, it is shown that under the classical hypotheses of Bayes theorem, 
the usual expression of posterior probability is coherent \cite{brr-bayes} but the lower and upper bounds of fully disintegrable extensions could not coincide.


The framework of finitely additive probabilities, as in the present paper, is not forcedly in contrast with countable additivity, as countably additive probabilities constitute a (distinguished) subclass of finitely additive ones. Dealing with finitely additive probabilities is often unavoidable also in the standard Kolmogorovian framework in order to overcome measurability difficulties and, needless to say, to handle  problems admitting a finitely additive solution but not a countably additive one.

For example, as is well-known, the weak limit of a weak convergent net of countably additive probability measures on the Borel $\sigma$-algebra of a topological space is generally only a finitely additive probability. Actually, the finitely additive setting allows to treat successfully some problems arising in weak convergence \cite{br-weakconv}.

Finitely additive probabilities appear ``naturally'' also in statistics \cite{kss-fa-statistics}, for example, to justify the use of the so-called {\it improper priors} \cite{heath1978,heath1989,berti1994} and to connect the profile likelihood with the integrated likelihood \cite{cpv-sma}.

Furthermore, in mathematical finance, it is known that in the {\it fundamental theorem of asset pricing} the absence of arbitrages of the first kind in the market is equivalent to the existence of a finitely additive probability under which the discounted wealth processes become ``local martingales'' \cite{kardaras,bpr-ftap}.

The notion of conditioning adopted in this paper differs from the Kolmogorovian definition as a Radon-Nikodym derivative. In the literature there are several examples showing that the standard notion of conditioning to a sub-$\sigma$-algebra gives rise to counter-intuitive and pathological situations that can be avoided using {\it proper regular conditional distributions} \cite{blackwell1975,ssk-improper-cd} which, in turn, give rise to conditional probabilities in the sense adopted in this paper. Actually, in \cite{blackwell1975} it is shown that countably additive proper regular conditional distributions may not exist for particular $\sigma$-algebras, while existence is proved if only finite additivity is required.

The paper is organized as follows. In Section~\ref{sec:preliminaries} some preliminaries on coherent conditional probabilities, capacities and Choquet integration are provided. In agreement with de~Finetti's approach \cite{definetti2}, we avoid a set-theoretic formalization and work with abstract Boolean algebras of events: since we mainly deal with atomic Boolean algebras, most of the results can be immediately translated in set-theoretic terms. In Section~\ref{sec:coherent-ext} the class of all full conditional probabilities extending $\{\pi,\sigma\}$ is considered and a closed form expression for its envelopes is given. Section~\ref{sec:disintegrable} copes with fully disintegrable extensions, providing a topological characterization of the corresponding class and a closed form expression of its envelopes. Finally, Section~\ref{sec:conglomerable} considers fully strongly conglomerable extensions under the assumption of non-integrability of $\sigma$.

The proofs of results in Section~\ref{sec:coherent-ext}, Section~\ref{sec:disintegrable} and Section~\ref{sec:conglomerable} are collected, respectively, in \ref{appendix0}, \ref{appendix1} and \ref{appendix2}.

\section{Preliminaries}
\label{sec:preliminaries}
Let $\A$ be a Boolean algebra of {\it events}  and denote with $(\cdot)^c$, $\vee$
and $\wedge$ the usual Boolean operations of contrary, disjunction and conjunction, respectively.
The {\it sure event} $\Omega$ and the {\it impossible event} $\emptyset$ coincide, respectively, with the top and bottom elements of $\A$ endowed with the partial order $\subseteq$ of {\it implication}.

A set of events $\G = \{E_i\}_{i \in I}$ can always be embedded into a minimal Boolean algebra denoted as $\langle\G\rangle$ and said the {\it Boolean algebra generated} by $\G$, moreover, $\langle\G\rangle^*$ denotes the {\it complete atomic Boolean algebra generated} by $\G$, which is the minimal atomic Boolean algebra closed under arbitrary conjunctions and disjunctions containing $\G$.  We also denote with $\langle \G \rangle^\sigma$ the {\it Boolean $\sigma$-algebra generated} by $\G$, i.e., the minimal Boolean subalgebra of $\langle\G\rangle^*$ which is closed under countable disjunctions and conjunctions.

A {\it conditional event} $E|H$ is an ordered pair of events $(E,H)$ with $H \neq \emptyset$.
Any set of conditional events $\G = \{E_i|H_i\}_{i \in I}$ can always be embedded into a minimal structured set $\langle\langle\G\rangle\rangle = \A \times \H$, where $\A = \langle\{E_i,H_i\}_{i \in I}\rangle$ and $\H \subseteq \A^0 = \A \setminus \{\emptyset\}$ is an additive class (i.e., it is closed under finite disjunctions).

Recall the definition of {\it coherent conditional probability}  essentially due to de Finetti \cite{definetti,williams,holzer,regazzini}.

\begin{definition}
Let $\G = \{E_i|H_i\}_{i \in I}$ be a set of conditional events. A function $P :\G \to [0,1]$ is a {\bf coherent conditional probability} if and only if, for every $n\in\NN$,
every $E_{i_1}|H_{i_1},\ldots,E_{i_n}|H_{i_n}\in\G$ and every real numbers $s_1,\ldots,s_n$, the random gain
(where ${\bf 1}_E$ denotes the indicator of an event $E$)
\begin{equation}
G=\sum_{j=1}^n s_j({\bf 1}_{E_{i_j}} - P(E_{i_j}|H_{i_j})){\bf 1}_{H_{i_j}},
\end{equation}
satisfies the following inequalities
\begin{equation}
\min_{C_r \subseteq H_0^0} G(C_r)\leq 0\leq  \max_{C_r \subseteq H_0^0} G(C_r),
\end{equation}
where $H_0^0=\bigvee_{j=1}^n H_{i_j}$, and $\B = \langle\{E_{i_j},H_{i_j}\}_{j=1,\ldots,n}\rangle$ with set of atoms $\C_\B = \{C_1,\ldots,C_m\}$.
\end{definition}
In particular, if $\G = \A \times \H$, where $\A$ is a Boolean algebra and $\H \subseteq \A^0$ is an additive class, then $P(\cdot|\cdot)$ is a coherent conditional probability if and only if it satisfies the following conditions:
\begin{description}
\item[{\bf (C1)}] $P(E|H)=P(E\wedge H|H)$, for every $E\in\A$ and $H\in\H$;
\item[{\bf (C2)}] $P(\cdot|H)$ is a finitely additive probability on $\A$, for every $H\in\H$;
\item[{\bf (C3)}] $P(E\wedge F|H)= P(E|H) \cdot P(F|E\wedge H)$, for
every $H, E\wedge H\in \H$ and $E, F\in\A$.
\end{description}
In this case $P(\cdot|\cdot)$ is simply said a {\it conditional probability}, moreover, following the terminology of \cite{dubins}, $P(\cdot|\cdot)$ is said a {\it full conditional probability on $\A$} (or a {\it f.c.p. on $\A$} for short) if its domain is $\A \times \A^0$.


Every coherent conditional probability can be extended on every superset of conditional events by 
the conditional version of the {\it fundamental theorem for probabilities} \cite{definetti,williams,regazzini}.
\begin{theorem}
\label{th:extension}
Let $\G$ and $\G'$ be arbitrary sets of conditional events with $\G \subset \G'$ and $P:\G \to [0,1]$. Then there exists a coherent conditional probability $\tilde{P}(\cdot|\cdot)$ on $\G'$ such that $\tilde{P}_{|\G} = P$ if and only if $P$ is a coherent conditional probability on $\G$. Moreover, if $\G' = \G \cup \{E|H\}$ the coherent values for the conditional probability of $E|H$ range in a closed interval $\II_{E|H} = [\u{P}(E|H),\o{P}(E|H)]$.
\end{theorem}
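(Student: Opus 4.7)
The plan is to prove the two directions separately, and to obtain the interval structure as a by-product of the existence argument for a single additional conditional event, then iterate via Zorn's lemma.

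For the \emph{only if} direction: if $\tilde{P}$ is coherent on $\G'$ and $P = \tilde{P}_{|\G}$, then for any finite subfamily $E_{i_1}|H_{i_1},\ldots,E_{i_n}|H_{i_n} \in \G \subset \G'$ and any reals $s_1,\ldots,s_n$, the random gain associated with $P$ coincides identically with the random gain associated with $\tilde{P}$ on that subfamily; hence the coherence inequalities transfer verbatim.

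For the \emph{if} direction I would first treat the one-step extension $\G' = \G \cup \{E|H\}$ and let $\Lambda \subseteq [0,1]$ denote the set of values $x$ such that the assessment $P \cup \{E|H \mapsto x\}$ is coherent. Since coherence is a finitary property (each defining inequality involves only finitely many conditional events), $\Lambda = \bigcap_{\F} \Lambda_\F$ where $\F$ runs over finite subsets of $\G$ and $\Lambda_\F$ is the set of values $x$ rendering $P_{|\F} \cup \{E|H \mapsto x\}$ coherent on $\F \cup \{E|H\}$. Working inside the finite atomic algebra $\B = \langle\F, E, H\rangle$, every coherent assessment on $\F \cup \{E|H\}$ can be described by a tuple of finitely additive probabilities on (subalgebras generated by layers of) $\B$ satisfying a finite system of linear equalities and inequalities dictated by conditions \textbf{(C1)}--\textbf{(C3)}; this feasible region is a nonempty compact convex polytope, and $\Lambda_\F$ is its projection onto the coordinate associated with $E|H$, hence a nonempty compact convex subset of $[0,1]$, i.e., a closed interval $[\u{P}_\F(E|H), \o{P}_\F(E|H)]$. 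The family $\{\Lambda_\F\}$ enjoys the finite intersection property because for finitely many $\F_1, \ldots, \F_k$ one has $\Lambda_{\F_1\cup\cdots\cup\F_k} \subseteq \bigcap_j \Lambda_{\F_j}$, and the former is nonempty by the finite case; by compactness of $[0,1]$, $\Lambda$ is itself a nonempty closed interval, from which we read both the existence of an extension and the identification $\II_{E|H} = [\u{P}(E|H), \o{P}(E|H)]$ with $\u{P}(E|H) = \inf \Lambda$ and $\o{P}(E|H) = \sup \Lambda$.

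For a general superset $\G'$, I would apply Zorn's lemma to the poset of coherent extensions of $P$ whose domain is a subset of $\G'$, ordered by domain-inclusion with pointwise coincidence. Every chain admits an upper bound via the union of domains (its coherence is inherited from that of its finite restrictions, which themselves lie in some chain element); a maximal element must have domain $\G'$, as otherwise the one-step argument above would produce a strict enlargement, contradicting maximality.

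The main obstacle is the finite case, namely verifying that $\Lambda_\F$ is a nonempty closed interval: this encodes the finite version of the fundamental theorem and requires the careful LP/polyhedral reformulation of de Finetti's coherence on a finite family of conditional events, together with the observation that conditioning on a hierarchy of possibly null events forces one to work with a chain of finitely additive probabilities on nested subalgebras rather than with a single probability on $\B$.
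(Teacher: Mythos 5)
This theorem is stated in the paper as a known preliminary (attributed to de~Finetti, Williams, Holzer and Regazzini) and is not proved there, so there is no in-paper argument to compare against; your overall architecture --- restriction for the ``only if'' direction, the representation $\Lambda=\bigcap_{\F}\Lambda_\F$ over finite subfamilies followed by a compactness/finite-intersection argument for the one-step extension, and Zorn's lemma for an arbitrary superset --- is exactly the standard route in the cited literature, and those outer layers of your proof are sound (coherence is by definition a condition on finite subfamilies, so both the restriction step and the union-of-a-chain step go through).

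The genuine gap is in the finite case, which you yourself flag as ``the main obstacle'' but then dispatch with a claim that does not hold: the set of coherent assessments on $\F\cup\{E|H\}$ is \emph{not} a compact convex polytope, so ``$\Lambda_\F$ is the projection of a polytope, hence an interval'' is not a valid inference. Two things break. First, because $H$ (and the other conditioning events) may receive zero probability at the top layer, a coherent assessment on a finite family is represented by a chain of finitely additive probabilities on nested supports, and the admissible layer structures vary from one extension to another; the feasible region is therefore a finite \emph{union} of pieces indexed by layer structures, and unions of convex sets project to unions of intervals, not to intervals. Second, even with the layer structure fixed, the constraint tying the unknown value $x$ of $E|H$ to the layer probability $m$ reads $m(E\wedge H)=x\,m(H)$, which is bilinear in $(x,m)$; for fixed layers $\Lambda_\F$ is the image of a polytope under a linear-fractional map (an interval), but the full $\Lambda_\F$ is a priori a finite union of such intervals, and showing that this union is connected is precisely the nontrivial content of the sequential linear-programming theorem of Coletti and Scozzafava that the paper itself invokes later (in the proof of Theorem~3). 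Your proof needs either that argument or some other explicit demonstration that any value between two coherent values of $E|H$ is itself coherent; a naive convex combination of two extensions does not work, since convex combinations of joints do not yield convex combinations of conditionals. Once $\Lambda_\F=[\u{P}^\F(E|H),\o{P}^\F(E|H)]$ is properly established, the rest of your argument is correct.
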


Theorem~\ref{th:extension} implies that $P:\G \to [0,1]$ is a coherent conditional probability if and only if there exists a conditional probability $\tilde{P}:\langle\langle\G\rangle\rangle \to [0,1]$ extending it.

The interval $\II_{E|H}$ in Theorem~\ref{th:extension} can be explicitly computed in terms of finite subfamilies of $\G$ as
\begin{equation}
\II_{E|H} = \bigcap \left\{\II_{E|H}^\F \,:\, \mbox{$\F \subseteq \G$, $\card \F < \aleph_0$}\right\},
\end{equation}
with $\II_{E|H}^\F = [\u{P}^\F(E|H), \o{P}^\F(E|H)]$ the closed interval obtained extending $P_{|\F}$ on
$\F \cup \{E|H\}$, thus, it holds
\begin{eqnarray}
\u{P}(E|H) &=& \sup\left\{\u{P}^\F(E|H) \,:\, \mbox{$\F \subseteq \G$, $\card \F < \aleph_0$}\right\},\\
\o{P}(E|H) &=& \inf\left\{\o{P}^\F(E|H) \,:\, \mbox{$\F \subseteq \G$, $\card \F < \aleph_0$}\right\}.
\end{eqnarray}

The set $\P = \{\tilde{P}(\cdot|\cdot)\}$ of all the coherent extensions of $P$ to $\G'$ is a compact subset of the space $[0,1]^{\G'}$ endowed with the product topology of pointwise convergence and the projection set on each element of $\G'$ is a (possibly degenerate) closed interval. The pointwise envelopes
\begin{equation}
\u{P}=\min\P\quad\mbox{and}\quad\o{P}=\max\P,
\end{equation}
are known as {\it coherent lower} and {\it upper conditional probabilities}, respectively \cite{williams}. The envelopes $\u{P}$ and $\o{P}$ satisfy the {\it duality} property, i.e., $\o{P}(E|H) = 1 - \u{P}(E^c|H)$, for every $E|H,E^c|H \in \G'$.

A {\it (normalized) capacity} on a Boolean algebra $\A$ is a function $\varphi:\A \to [0,1]$ satisfying $\varphi(\emptyset) = 0$, $\varphi(\Omega) = 1$ and $\varphi(A) \le \varphi(B)$ when $A \subseteq B$, for $A,B \in \A$. Every capacity has an associated {\it dual} capacity $\psi$ defined for every $E \in \A$ as $\psi(E) = 1 - \varphi(E^c)$. A capacity $\varphi$ is then said to be {\it $n$-monotone}, with $n \ge 2$, if it satisfies the further property, for every $A_1,\ldots,A_n \in \A$,
\begin{equation}
\varphi\left(\bigvee_{i=1}^n A_i\right) \ge \sum\limits_{\emptyset \neq I \subseteq \{1,\ldots,n\}}(-1)^{|I| - 1}
\varphi\left(\bigwedge_{i \in I}A_i\right).
\label{eq:n-mon}
\end{equation}
A capacity which is $n$-monotone for every $n \ge 2$ is also referred to as {\it totally monotone capacity}.

Every $2$-monotone capacity $\varphi$ on $\A$ induces a closed convex set of finitely additive probabilities on $\A$ said {\it core} defined as
\begin{equation}
\P_\varphi = \{\tilde{\pi} \,:\, \mbox{$\tilde{\pi}$ finitely additive probability on $\A$, $\varphi \le \tilde{\pi}$}\}.
\end{equation}

Every capacity $\varphi$ on $\A$ gives rise to an {\it inner measure} $\varphi_*$ on $\langle\A\rangle^*$ defined for every $E \in \langle\A\rangle^*$ as
$
\varphi_*(E) = \sup\{\varphi(B) \,:\, B \subseteq E, B \in \A\}.
$
In \cite{choquet,denneberg-inner} it is proved that if $\varphi$ is $n$-monotone then also $\varphi_*$ is, so the inner measure induced by a finitely additive probability is always totally monotone. 
The dual of the inner measure induced by a capacity will be referred to as {\it outer measure} and will be denoted as $\varphi^*$.

Let $\L=\{H_i\}_{i \in I}$ be a {\it partition} of $\Omega$,
$\A_\L$ a Boolean algebra such that $\langle \L \rangle \subseteq \A_\L \subseteq \langle \L \rangle^*$, and $X:\L \to \RR$ a real function. For $t \in \RR$, introduce the event $(X \ge t) =\bigvee\{H_i \in \L\;:\; X(H_i) \ge t\}$, which does not necessarily belong to $\A_\L$.

Given a finitely additive probability $\pi$ on $\A_\L$, the {\it lower} and {\it upper Stieltjes integrals} with respect to $\pi$ (see, e.g., \cite{bhaskara}) of a bounded $X$ are defined as
\begin{eqnarray}
\u{\int}X(H_i)\pi(\d H_i) &=& \sup_{\L^\F \subseteq \A_\L} \sum_{u=1}^m \left(\left(\inf_{H_i \subseteq A_u} X(H_i)\right)\pi(A_u)\right),\\
\o{\int}X(H_i)\pi(\d H_i) &=& \inf_{\L^\F \subseteq \A_\L} \sum_{u=1}^m \left(\left(\sup_{H_i \subseteq A_u} X(H_i)\right)\pi(A_u)\right),
\end{eqnarray}
where $\L^\F = \{A_u\}_{u=1}^m$ is a finite partition of $\Omega$ contained in $\A_\L$.
The function $X$ is said {\it Stieltjes integrable}, or {\it S-integrable} for short, with respect to $\pi$ if $\u{\int}X(H_i)\pi(\d H_i) = \o{\int}X(H_i)\pi(\d H_i)$ and their common value, denoted as $\int X(H_i)\pi(\d H_i)$, is called {\it Stieltjes integral}.

The function $X$ is said {\it $\A_\L$-continuous} \cite{bhaskara,deCooman-book} if is bounded and for every $t \in \RR$ and $\epsilon > 0$ there exists $A \in \A_\L$ such that
$
(X \ge t) \supseteq A \supseteq (X \ge t + \epsilon).
$
The notion of $\A_\L$-continuity coincides with the notion of measurability required in \cite{schmeidler}, moreover, every $\A_\L$-continuous function is S-integrable with respect to every finitely additive probability on $\A_\L$ \cite{bhaskara}. If $\A_\L$ is a Boolean $\sigma$-algebra then $\A_\L$-continuous functions exactly coincide with bounded $\A_\L$-measurable (in the usual sense) functions.

Given a capacity $\varphi$ on $\A_\L$ with associated inner measure $\varphi_*$ on $\langle\L\rangle^*$, the {\it Choquet integral} of an $\A_\L$-continuous $X$ (see, e.g., \cite{denneberg}) with respect to $\varphi$ is defined as
\begin{equation}
\intC X(H_i) \varphi(\d H_i) = \int_{-\infty}^0 [\varphi_*(X \ge t) - 1] \d t + \int_0^{+\infty} \varphi_*(X \ge t) \d t,
\end{equation}
where the integrals on the right side are usual Riemann integrals.

For every $2$-monotone capacity $\varphi$ on $\A$ with dual capacity $\psi$ and core $\P_\varphi$, and every $\A_\L$-continuous function $X$ it holds (see, e.g., \cite{schmeidler})
\begin{eqnarray}
\intC X(H_i) \varphi(\d H_i) &=& \min\left\{\int X(H_i)\tilde{\pi}(\d H_i) \,:\, \tilde{\pi} \in \P_\varphi\right\},\\
\intC X(H_i) \psi(\d H_i) &=& \max\left\{\int X(H_i)\tilde{\pi}(\d H_i) \,:\, \tilde{\pi} \in \P_\varphi\right\},
\end{eqnarray}
so, in particular, if $\varphi$ is finitely additive it holds $\intCs X \d \varphi = \intCs X \d \psi = \int X \d \varphi$.

\section{Coherent extensions of a strategy and a prior}
\label{sec:coherent-ext}
Let $\L = \{H_i\}_{i \in I}$ and $\E = \{E_j\}_{j \in J}$ be two partitions of $\Omega$, with $I,J$ arbitrary index sets, and consider the Boolean algebras $\A_\L$ and $\A_\E$ such that $\langle\L\rangle \subseteq \A_\L \subseteq \langle\L\rangle^*$ and  $\langle\E\rangle \subseteq \A_\E \subseteq \langle\E\rangle^*$.

Consider a Boolean algebra $\A$ such that $\langle\A_\L \cup \A_\E\rangle \subseteq \A \subseteq \langle\A_\L \cup \A_\E\rangle^*$, whose set of atoms is $\C_\A = \{H_i \wedge E_j \neq \emptyset \,:\, H_i \in \L, E_j \in \E\}$.

A {\it strategy} is any map $\sigma:\A \times \L \to [0,1]$ satisfying the following conditions for every $H_i \in \L$:
\begin{itemize}
\item[\bf (S1)]
$\sigma(F|H_i) = 1$ if $F \wedge H_i = H_i$, for every $F \in \A$;
\item[\bf (S2)] $\sigma(\cdot|H_i)$ is a finitely additive probability on $\A$.
\end{itemize}

It is known (see \cite{dubins,regazzini-bayes}) that any strategy is a coherent conditional probability and the same holds for any {\it statistical model} $\lambda = \sigma_{|\A_\E \times \L}$. In general, given a statistical model $\lambda$ on $\A_\E \times \L$ there can exist possibly infinite strategies extending $\lambda$ on $\A \times \L$. Nevertheless, in the case $\A$ coincides with the Boolean algebra generated by $\A_\L \cup \A_\E$, a statistical model $\lambda$ extends uniquely to a strategy $\sigma$ on $\A \times \L$.

\begin{proposition}
\label{prop:uniqueness-strategy}
Let $\lambda$ be a statistical model on $\A_\E \times \L$ and $\A = \langle\A_\L \cup \A_\E\rangle$, then there exists a unique strategy $\sigma$ on $\A \times \L$ such that $\sigma_{|\A_\E \times \L} = \lambda$.
\end{proposition}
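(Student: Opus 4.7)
The plan is to reduce to a ``trace on $H_i$'' argument: for each $H_i \in \L$, show that any finitely additive probability on $\A$ assigning mass $1$ to $H_i$ is uniquely determined by the values $\lambda(\cdot|H_i)$ on $\A_\E$, and that the resulting forced assignment is indeed a strategy. First I would establish the structural lemma: for every $F \in \A$ and every $H_i \in \L$ there exists $E_F^i \in \A_\E$ with $F \wedge H_i = E_F^i \wedge H_i$. I prove this by induction on the Boolean formation of $F$ from generators in $\A_\L \cup \A_\E$. The base cases are: for $F \in \A_\E$ take $E_F^i := F$; for $F \in \A_\L \subseteq \langle \L \rangle^*$ the event $F$ is a disjunction of elements of the partition $\L$, hence either $H_i \subseteq F$ (take $E_F^i := \Omega$) or $H_i \wedge F = \emptyset$ (take $E_F^i := \emptyset$). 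The induction step is immediate because $(\cdot) \wedge H_i$ commutes with conjunction, disjunction and complementation.

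Next I would check that $\lambda(E_F^i|H_i)$ does not depend on the chosen $E_F^i$. If $E_1, E_2 \in \A_\E$ both satisfy $E_k \wedge H_i = F \wedge H_i$, then $(E_1 \triangle E_2) \wedge H_i = \emptyset$; any $E \in \A_\E$ disjoint from $H_i$ has $E^c \supseteq H_i$, so $\lambda(E^c|H_i) = 1$ by (S1) applied to $\lambda$, giving $\lambda(E|H_i) = 0$ by (S2). Finite additivity of $\lambda(\cdot|H_i)$ then yields $\lambda(E_1|H_i) = \lambda(E_1 \wedge E_2|H_i) = \lambda(E_2|H_i)$. I then set
\begin{equation*}
\sigma(F|H_i) := \lambda(E_F^i|H_i),
\end{equation*}
which is well defined. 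Condition (S1) is immediate since $H_i \subseteq F$ allows $E_F^i = \Omega$ with $\lambda(\Omega|H_i) = 1$. For (S2), the commutation relations underlying Step 1 imply that $F \mapsto E_F^i$, read modulo the equivalence $E_1 \sim_{H_i} E_2 \Leftrightarrow E_1 \wedge H_i = E_2 \wedge H_i$, is a Boolean homomorphism from $\A$ onto $\A_\E/{\sim_{H_i}}$, so $\sigma(\cdot|H_i)$ inherits finite additivity from $\lambda(\cdot|H_i)$.

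For uniqueness, let $\sigma'$ be any strategy on $\A \times \L$ extending $\lambda$. For each $H_i$, axiom (S1) gives $\sigma'(H_i|H_i) = 1$ and hence, by (S2), $\sigma'(A|H_i) = 0$ for every $A \in \A$ with $A \subseteq H_i^c$. Consequently, for every $F \in \A$,
\begin{equation*}
\sigma'(F|H_i) = \sigma'(F \wedge H_i|H_i) = \sigma'(E_F^i \wedge H_i|H_i) = \sigma'(E_F^i|H_i) = \lambda(E_F^i|H_i) = \sigma(F|H_i),
\end{equation*}
since $F$ and $F \wedge H_i$ (respectively, $E_F^i$ and $E_F^i \wedge H_i$) differ by a subset of $H_i^c$, and $\sigma'_{|\A_\E \times \L} = \lambda$. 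Thus $\sigma' = \sigma$.

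The main obstacle I anticipate is the careful bookkeeping in the trace lemma of Step 1 and the verification that $F \mapsto E_F^i$ really descends to a Boolean homomorphism onto $\A_\E/{\sim_{H_i}}$; once this algebraic step is secured, both existence and uniqueness fall out transparently from the axioms of $\lambda$.
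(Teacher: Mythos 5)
Your proof is correct and follows essentially the same route as the paper's: both rest on the key structural fact that every $F\in\A$ admits an event $E_F^i\in\A_\E$ with $F\wedge H_i=E_F^i\wedge H_i$ (the paper exhibits it explicitly from a disjunctive normal form over generators in $\A_\L\cup\A_\E$, you obtain it by induction on the Boolean formation of $F$), after which uniqueness follows from the identical chain $\sigma(F|H_i)=\sigma(F\wedge H_i|H_i)=\sigma(E_F^i\wedge H_i|H_i)=\sigma(E_F^i|H_i)=\lambda(E_F^i|H_i)$. Your additional verification of well-definedness and of (S1)--(S2) for the induced $\sigma$ is a welcome completion of the existence half, which the paper leaves implicit.
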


For the sake of generality, in what follows we will always refer to a strategy $\sigma$ in place of a statistical model $\lambda$.

As immediate consequence of Theorem~5 in \cite{dubins}, given a strategy $\sigma$ on $\A \times \L$ and a finitely additive prior probability $\pi$ on $\A_\L$, the whole assessment $\{\pi,\sigma\}$ is a coherent conditional probability on $\G = (\A_\L \times \{\Omega\}) \cup (\A \times \L)$.

A finitely additive probability $P^\j:\A \to [0,1]$ is said a {\it joint probability consistent with $\{\pi,\sigma\}$} if
$P^\j_{|\A_\L} = \pi$ and $\{P^\j,\sigma\}$ is a coherent conditional probability on $\G' = \A \times (\{\Omega\} \cup \L)$. In general the joint probability on $\A$ consistent with $\{\pi,\sigma\}$ is not unique, so the aim here is to characterize the set of consistent joint probabilities $\P^\j = \{\tilde{P}^\j(\cdot)\}$ in terms of its envelopes that are called {\it lower} and {\it upper joint probabilities} $\u{P}^\j = \min \P^\j$ and $\o{P}^\j = \max \P^\j$.

For this, consider the set
$$
\P = \{\tilde{P} \,:\, \mbox{f.c.p. on $\A$ extending $\{\pi,\sigma\}$}\},
$$
together with its lower and upper envelopes $\u{P} = \min \P$ and $\o{P} = \max \P$, for which it holds $\u{P}^\j = \u{P}_{|\A \times \{\Omega\}}$ and $\o{P}^\j = \o{P}_{|\A \times \{\Omega\}}$.

The following theorem provides a characterization of the lower joint probability $\u{P}^\j$ on $\A$ consistent with $\{\pi,\sigma\}$.

\begin{theorem}
\label{th:joint}
The lower joint probability $\u{P}^\j(\cdot)$ is such that, for every $F \in \A$, it holds
$$
\u{P}^\j(F) = \sup_{\L^\F \subseteq \A_\L}\left\{\sum_{h=1}^n \sigma(F|H_{i_h})\pi(H_{i_h}) + \sum_{B_k \subseteq F} \pi(B_k)\right\},
$$
where $\L^\F = \{H_{i_h}\}_{h=1}^n \cup \{B_k\}_{k = 1}^t$ is a finite partition of $\Omega$ contained in $\A_\L$.
\end{theorem}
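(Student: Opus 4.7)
I would split the identity into the two inequalities and prove each separately.

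\emph{Direction $\u{P}^\j(F)\ge\sup_{\L^\F}[\cdots]$.} Fix any f.c.p.\ $\tilde P\in\P$ extending $\{\pi,\sigma\}$ and any finite partition $\L^\F=\{H_{i_h}\}_{h=1}^n\cup\{B_k\}_{k=1}^t\subseteq\A_\L$. Finite additivity of $\tilde P(\cdot|\Omega)$ gives
$$
\tilde P(F)=\sum_{h=1}^n \tilde P(F\wedge H_{i_h})+\sum_{k=1}^t \tilde P(F\wedge B_k).
$$
Because $\tilde P$ extends both $\pi$ and $\sigma$, condition (C3) forces $\tilde P(F\wedge H_{i_h})=\sigma(F|H_{i_h})\pi(H_{i_h})$ (trivially so even when $\pi(H_{i_h})=0$). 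Each $\tilde P(F\wedge B_k)\ge 0$, with equality forced when $B_k\wedge F=\emptyset$ and with $\tilde P(F\wedge B_k)=\pi(B_k)$ forced when $B_k\subseteq F$. Discarding the remaining, possibly positive, terms yields $\tilde P(F)\ge\sum_{h}\sigma(F|H_{i_h})\pi(H_{i_h})+\sum_{B_k\subseteq F}\pi(B_k)$; taking the minimum over $\tilde P\in\P$ and then the supremum over partitions gives this direction.

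\emph{Direction $\u{P}^\j(F)\le\sup_{\L^\F}[\cdots]$.} By Theorem~\ref{th:extension} applied to $\G$, $\u{P}^\j(F)=\sup\{\u{P}^\F(F|\Omega):\F\subseteq\G,\;\card\F<\aleph_0\}$. Given finite $\F\subseteq\G$, collect the strategy-side conditioning atoms $\H^\F\subseteq\L$ appearing in $\F$ and the prior-side events $\H'^\F\subseteq\A_\L$ occurring in $\F$; since each $H_i\in\L$ is already an atom of $\A_\L$, the atoms of $\langle\H^\F\cup\H'^\F\rangle$ form a finite partition $\L^\F=\{H_{i_h}\}_{h=1}^n\cup\{B_k\}_{k=1}^t\subseteq\A_\L$. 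Enlarge $\F$ to $\F'\subseteq\G$ by adjoining $A_u|\Omega$ with value $\pi(A_u)$ for every $A_u\in\L^\F$ and $F|H_{i_h}$ with value $\sigma(F|H_{i_h})$ for every $h$; the global coherence of $\{\pi,\sigma\}$ yields coherence of $\F'$, hence $\u{P}^\F(F)\le\u{P}^{\F'}(F)$. Decomposing $P(F)$ along $\L^\F$, the constraints in $\F'$ pin $P(F\wedge H_{i_h})=\sigma(F|H_{i_h})\pi(H_{i_h})$, leave $P(F\wedge B_k)\in[0,\pi(B_k)]$ free with forced endpoints in the purely disjoint or purely included cases, and impose no coupling among distinct $B_k$'s. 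The coherent minimum is therefore
$$
\u{P}^{\F'}(F)=\sum_{h=1}^n\sigma(F|H_{i_h})\pi(H_{i_h})+\sum_{B_k\subseteq F}\pi(B_k),
$$
and taking the supremum over $\F$ (equivalently, over $\L^\F$) gives the reverse bound.

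\emph{Main obstacle.} The subtle step is the last computation of $\u{P}^{\F'}(F)$: one must argue that the independent per-block minima $P(F\wedge B_k)=0$ for the ``partial'' blocks (those with $B_k\wedge F\neq\emptyset$ and $B_k\not\subseteq F$) are simultaneously attainable by a single coherent assignment on the finite algebra generated by $\F'\cup\{F|\Omega\}$. Keeping the argument at the finite-subfamily level reduces this to a finitary linear program via de~Finetti's fundamental theorem, where the decisions $P(F\wedge B_k)$ are decoupled; this sidesteps the delicate finitely additive constructions (e.g., ultrafilter-type measures concentrating on $B_k\wedge F^c$) that would be needed to realize $\tilde P(F\wedge B_k)=0$ by an honest full conditional probability on the global algebra $\A$.
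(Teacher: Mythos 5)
Your proposal is correct and follows essentially the same route as the paper: reduce via Theorem~\ref{th:extension} to finite subfamilies of $\G$, enlarge each to the canonical form $(\L^\F\times\{\Omega\})\cup(\{F\}\times\{H_{i_h}\}_{h=1}^n)$ using monotonicity of $\u{P}^\F$ in $\F$, and compute the coherent minimum there as $\sum_{h}\sigma(F|H_{i_h})\pi(H_{i_h})+\sum_{B_k\subseteq F}\pi(B_k)$. Your separate direct verification of the ``$\ge$'' inequality is subsumed in the paper by the equality $\u{P}(F)=\sup_\F\u{P}^\F(F)$, so nothing essential differs.
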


If $\L$ is countable and $\pi$ is countably additive on $\A_\L$, then for every $F \in \A$ it holds
$
\u{P}^\j(F) = \sum_{i=1}^{\infty}\sigma(F|H_i)\pi(H_i),
$
thus, $\u{P}^\j$ is a finitely additive probability on $\A$ coinciding with $\o{P}^\j$ (i.e., $\P^\j$ reduces to a singleton), moreover, if $\sigma(\cdot|H_i)$ is countably additive on $\A$ for every $H_i \in \L$, then $\u{P}^\j$ is countably additive.
On the contrary, if $\card \L > \aleph_0$, then the countable additivity of $\pi$ does not imply the unicity of the joint probability in $\P^\j$ as showed by the following example.

\begin{example}
Let $\L = \{H_i\}_{i \in \RR}$, $\E = \{E_1,E_2\}$ with $H_i \wedge E_j \neq \emptyset$ for every $i,j$, and take $\A_\L$ isomorphic to the Borel $\sigma$-field on $\RR$, $\A_\E = \langle\E\rangle$, $\A = \langle \A_\L \cup \A_\E\rangle$. Let $\pi$ be any diffuse (i.e., such that $\pi(H_i) = 0$ for every $i$) countable additive probability on $\A_\L$ and $\sigma$ any strategy on $\A \times \L$. For every finite partition $\L^\F = \{H_{i_h}\}_{h=1}^n \cup \{B_k\}_{k = 1}^t$ it holds $\sigma(E_j|H_{i_h})\pi(H_{i_h}) = 0$ and $\emptyset \neq E_j \wedge B_k \neq E_j$, for $j=1,2$, thus $\u{P}^\j(E_j) = 0$ and $\o{P}^\j(E_j) = 1$ for $j=1,2$.
\end{example}

Next result provides a characterization of $\u{P}(\cdot|\cdot)$ relying on $\u{P}^\j(\cdot)$, $\o{P}^\j(\cdot)$ and the functions $L^\j(\cdot,\cdot)$ and
$U^\j(\cdot,\cdot)$ defined for $F \in \A$ and $K \in \A^0$ as
\begin{eqnarray}
L^\j(F,K) &=& \min\left\{\tilde{P}^\j(F \wedge K) \,:\, \tilde{P}^\j \in \P^\j,\right.\\\nonumber
&&~~~~~~~~~~~~\left.\tilde{P}^\j(F^c \wedge K) = \o{P}^\j(F^c \wedge K)\right\},\\
U^\j(F,K) &=& \max\left\{\tilde{P}^\j(F \wedge K) \,:\, \tilde{P}^\j \in \P^\j,\right.\\\nonumber
&&~~~~~~~~~~~~\left.\tilde{P}^\j(F^c \wedge K) = \u{P}^\j(F^c \wedge K)\right\},
\end{eqnarray}
for which it holds
$$
\u{P}^\j(F \wedge K) \le L^\j(F,K) \le U^\j(F,K) \le \o{P}^\j(F \wedge K).
$$

\begin{theorem}
\label{th:ext-prob-lik-inf}
The lower envelope $\u{P}(\cdot|\cdot)$ is such that, for every $F|K \in \A \times \A^0$, $\u{P}(F|K) = 1$ when $F \wedge K = K$, and if $F \wedge K \neq K$, then:
\begin{itemize}
\item[\it (i)] if $\u{P}^\j(K) > 0$, then
$$
\u{P}(F|K) =\min\left\{\frac{\u{P}^\j(F \wedge K)}{\u{P}^\j(F \wedge K) + U^\j(F^c,K)},
\frac{L^\j(F,K)}{L^\j(F,K) + \o{P}^\j(F^c \wedge K)}\right\};
$$
\item[\it (ii)] if $\u{P}^\j(K) = 0$, then
$$
\u{P}(F|K) =
\left\{
\begin{array}{ll}
\min\limits_{i \in I_2^{F|K}} \frac{\sigma(F \wedge K|H_i)}{\sigma(K|H_i)} & \mbox{if $I_2^{F|K} \neq \emptyset = I_3^{F|K}$, $\card I_2^{F|K} < \aleph_0$}\\
& \mbox{and $\sigma(K|H_i) > 0$ for all $i \in I_2^{F|K}$},\\[2ex]
0 & \mbox{otherwise},
\end{array}
\right.
$$
where
$I_1^{F|K} = \{i \in I \,:\, H_i \wedge F \wedge K \neq \emptyset =  H_i \wedge F^c \wedge K\}$,
$I_2^{F|K} = \{i \in I \,:\, H_i \wedge F \wedge K \neq \emptyset \neq  H_i \wedge F^c \wedge K\}$, and
$I_3^{F|K} = \{i \in I \,:\, H_i \wedge F \wedge K = \emptyset \neq  H_i \wedge F^c \wedge K\}$.
\end{itemize}
\end{theorem}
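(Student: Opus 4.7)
The plan is to exploit the natural correspondence between $\P$ and $\P^\j$: every $\tilde{P}\in\P$ yields a joint $\tilde{P}^\j=\tilde{P}(\cdot|\Omega)\in\P^\j$, and conversely Theorem~\ref{th:extension} guarantees that each $\tilde{P}^\j\in\P^\j$ admits at least one extension in $\P$. For $F|K$ with $F\wedge K\neq K$, the decisive quantity is $\tilde{P}^\j(K)$: when $\tilde{P}^\j(K)>0$, property \textbf{(C3)} forces $\tilde{P}(F|K)=\tilde{P}^\j(F\wedge K)/\tilde{P}^\j(K)$, while when $\tilde{P}^\j(K)=0$ the value $\tilde{P}(F|K)$ is not pinned down by the joint and enjoys further freedom, constrained only through coherence with $\sigma$ on the sub-events $H_i\wedge K$. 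This dichotomy is exactly the split into cases $(i)$ and $(ii)$.

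For case $(i)$, since $\u{P}^\j(K)>0$ every $\tilde{P}^\j\in\P^\j$ satisfies $\tilde{P}^\j(K)\geq\u{P}^\j(K)>0$, so $\u{P}(F|K)$ reduces to minimising the ratio $x/(x+y)$ over the compact convex set
$$
S=\{(\tilde{P}^\j(F\wedge K),\tilde{P}^\j(F^c\wedge K)) : \tilde{P}^\j\in\P^\j\}\subseteq [0,1]^2.
$$
Because $(x,y)\mapsto x/(x+y)$ is quasi-linear on the positive orthant (its level sets are rays from the origin), along any segment joining two points of $S$ the ratio is monotone, so the minimum is attained at an extreme point of the north-west frontier of $S$. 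Two natural candidates arise from the two orders in which the coupled extremes can be taken: first the leftmost $x=\u{P}^\j(F\wedge K)$ paired with the largest compatible $y=U^\j(F^c,K)$; dually, first the highest $y=\o{P}^\j(F^c\wedge K)$ paired with the smallest compatible $x=L^\j(F,K)$. The minimum is then the smaller of the two resulting ratios, which is precisely the claimed formula.

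For case $(ii)$ the extensions with $\tilde{P}^\j(K)=0$ are the relevant ones. Partitioning the indices of those $H_i$ meeting $K$ into $I_1^{F|K}$, $I_2^{F|K}$ and $I_3^{F|K}$, condition \textbf{(C3)} together with $\tilde{P}(\cdot|H_i)=\sigma(\cdot|H_i)$ yields $\tilde{P}(F|K\wedge H_i)=1$ for $i\in I_1^{F|K}$, $\tilde{P}(F|K\wedge H_i)=0$ for $i\in I_3^{F|K}$, $\tilde{P}(F|K\wedge H_i)=\sigma(F\wedge K|H_i)/\sigma(K|H_i)$ for $i\in I_2^{F|K}$ with $\sigma(K|H_i)>0$, and a free value in $[0,1]$ otherwise. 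If $I_3^{F|K}\neq\emptyset$, concentrating $\tilde{P}(\cdot|K)$ on some $H_{i^*}\wedge K$ with $i^*\in I_3^{F|K}$ achieves $\tilde{P}(F|K)=0$; the same conclusion holds if some $i\in I_2^{F|K}$ has $\sigma(K|H_i)=0$, since the corresponding free value may be set to $0$, or if $|I_2^{F|K}|=\aleph_0$ with $I_3^{F|K}=\emptyset$, spreading $\tilde{P}(\cdot|K)$ diffusely over the countably many $H_i\wedge K$ via a purely finitely additive assignment. In the remaining sub-case $I_3^{F|K}=\emptyset$, $|I_2^{F|K}|<\aleph_0$ and $\sigma(K|H_i)>0$ on $I_2^{F|K}$, the event $F^c\wedge K$ equals the finite disjunction $\bigvee_{i\in I_2^{F|K}}(F^c\wedge K\wedge H_i)$, so finite additivity of $\tilde{P}(\cdot|K)$ gives
$$
\tilde{P}(F|K)=1-\sum_{i\in I_2^{F|K}}\left(1-\frac{\sigma(F\wedge K|H_i)}{\sigma(K|H_i)}\right)\tilde{P}(H_i|K),
$$
and minimising over the simplex $\tilde{P}(H_i|K)\geq 0$, $\sum_{i\in I_2^{F|K}}\tilde{P}(H_i|K)\leq 1$, yields $\min_{i\in I_2^{F|K}}\sigma(F\wedge K|H_i)/\sigma(K|H_i)$, attained by concentrating all of the conditional mass on the minimising index.

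The main obstacle is to verify, in each sub-case of $(ii)$, that the particular assignment of $\tilde{P}(\cdot|K)$ (and of the free values $\tilde{P}(F|K\wedge H_i)$ when $\sigma(K|H_i)=0$) extends to a genuine full conditional probability on $\A$ compatible with $\{\pi,\sigma\}$. This coherence check is carried out through Theorem~\ref{th:extension} applied to finite sub-assessments; in particular, the diffuse construction needed for $|I_2^{F|K}|=\aleph_0$ hinges on the existence of purely finitely additive probabilities on an infinite partition, a phenomenon unavailable in the countably additive setting and the source of the essentially finitely additive flavour of the statement.
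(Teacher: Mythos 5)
Your overall strategy matches the paper's: the same dichotomy on $\tilde{P}^\j(K)$, the same monotonicity argument for $x/(x+y)$ in case \emph{(i)}, and the same sub-case analysis of $I_1^{F|K},I_2^{F|K},I_3^{F|K}$ with the same concentration/diffusion constructions in case \emph{(ii)}. Where you genuinely differ is in how case \emph{(ii)} is verified. The paper computes $\u{P}(F|K)$ as a supremum of lower bounds $\u{P}^\F(F|K)$ over finite subfamilies $\F$ of the assessment, solving for each $\F$ an explicit linear optimization problem whose constraints encode compatibility with $\sigma$ on the $H_i$ with $\pi(H_i)=0$ and $\sigma(K|H_i)>0$, and reads the three sub-cases off the structure of its feasible solutions. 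You instead argue directly inside a full conditional probability, using {\bf (C3)} and finite additivity to get $\tilde{P}(F|K)=1-\sum_{i\in I_2^{F|K}}\bigl(1-\sigma(F\wedge K|H_i)/\sigma(K|H_i)\bigr)\tilde{P}(H_i|K)$ and optimizing over the simplex. This yields a cleaner lower bound, and one valid for \emph{every} $\tilde{P}\in\P$, so your opening restriction to extensions with $\tilde{P}^\j(K)=0$ is in fact unnecessary for that direction; but it shifts all the substantive work into attainability, i.e.\ showing that concentrating $\tilde{P}(\cdot|K)$ on a chosen $H_{i^*}\wedge K$, assigning the free values when $\sigma(K|H_i)=0$, or taking a diffuse purely finitely additive assignment when $\card I_2^{F|K}\ge\aleph_0$, is compatible with $\{\pi,\sigma\}$ and extends to an element of $\P$. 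You name Theorem~\ref{th:extension} as the tool but do not execute this check; in the paper it is exactly the content of the linear system, so this is the one step you would still need to write out. Two minor points: $I_2^{F|K}$ need not be countable when infinite, so your diffuse construction should be phrased for an arbitrary infinite index set (e.g.\ via a $0$--$1$-valued finitely additive probability vanishing on finite subsets of $I_2^{F|K}$); and in case \emph{(i)} your ``extreme point of the north-west frontier'' remark leaves the identification of the minimizer with one of the two displayed candidates at the same level of justification as the paper's one-line monotonicity argument.
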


The following example, inspired to Example~2.1 in \cite{regazzini-unimi}, shows the application of previous theorem in a Bayesian inferential procedure.

\begin{example}
\label{ex:coerenti}
Consider a finite population of unknown size and let $\Theta$ be the random relative frequency of a characteristic under study. For the range of $\Theta$ it is ``natural'' to assume ${\bf \Theta} = [0,1] \cap \QQ$, so let $\L = \{H_{\theta} = (\Theta = \theta) \,:\, \theta \in {\bf \Theta}\}$, and $\A_\L = \langle\L\rangle^*$.

Assign a uniform distribution to $\Theta$, specifying a prior probability $\pi$
on $\A_\L$ such that $\pi(\Theta \in [0,\theta] \cap {\bf \Theta}) = \theta$, for $\theta \in {\bf \Theta}$.
The probability $\pi$ is only finitely additive since, for $\theta \in {\bf \Theta}$,
$(\Theta = \theta) = \bigwedge_{n \in \NN} \left(\Theta \in \left(\theta-\frac{1}{n},\theta\right] \cap {\bf \Theta}\right)$, thus
$\pi(\Theta = \theta) = \lim_{n \in \NN} \pi\left(\Theta \in \left(\theta-\frac{1}{n},\theta\right]  \cap {\bf \Theta}\right) = 0$.

Draw a sample with replacement of size $n$ from the population, and let $X$ be the number of individuals showing the characteristic under study, whose range is ${\bf X} = \{0,\ldots,n\}$. Let $\E = \{E_x = (X = x) \,: \, x \in {\bf X}\}$ for which it holds $H_0 \subseteq E_0$, $H_1 \subseteq E_n$, and $H_\theta \wedge E_x \neq \emptyset$ for $\theta \in {\bf \Theta} \setminus \{0,1\}$ and $x \in {\bf X}$. Take $\A_\E = \langle\E\rangle$ and let $\lambda$ be the statistical model on $\A_\E \times \L$ singled out by
$$
\lambda(X = x|\Theta = \theta) = \binom{n}{x}\theta^x (1 - \theta)^{n-x}.
$$
The statistical model $\lambda$ uniquely extends to a strategy $\sigma$ on $\A \times \L$, where $\A = \langle\A_\L \cup \A_\E\rangle$.

Consider the conditional event $A|B$ with $A = \left(\Theta \in \left(\theta_1,\theta_2\right] \cap {\bf \Theta} \right)$ and $B = (X = n)$, where $\theta_1,\theta_2 \in {\bf \Theta}$, $\theta_1 < \theta_2$, which is related to a posterior probability. Since $H_0 \wedge E_n = \emptyset$, $H_1 \subseteq E_n$, $H_\theta \wedge E_n \neq \emptyset$ for $\theta \in {\bf \Theta} \setminus \{0,1\}$, and $\pi(\Theta = \theta) = 0$ for every $\theta \in {\bf \Theta}$, we have $\u{P}^\j(B) = 0$. Moreover, since $I_3^{A|B} \neq \emptyset \neq I_3^{A^c|B}$, it follows $\u{P}(A|B) = \u{P}(A^c|B) = 0$, and so $\o{P}(A|B) = 1$, which implies that the coherent probability values of $A|B$ range in $[0,1]$.

Then, consider the conditional event $C|D$ with $C = (X = n)$ and $D = \left(\Theta \in \left\{\frac{i}{10} \,:\, i=1,\ldots,9\right\}\right)$.
We have $\u{P}^\j(D) = 0$, $I_1^{C|D} = I_3^{C|D} =
I_1^{C^c|D} = I_3^{C^c|D} = \emptyset$, and $I_2^{C|D} = I_2^{C^c|D} = \left\{\frac{i}{10} \,:\, i=1,\ldots,9\right\}$. Since $\sigma\left(D|\Theta = \frac{i}{10}\right) = 1$, for $i=1,\ldots,9$, it follows $\u{P}(C|D) = \left(\frac{1}{10}\right)^n$, $\u{P}(C^c|D)= 1 - \left(\frac{9}{10}\right)^n$, and so $\o{P}(C|D) = \left(\frac{9}{10}\right)^n$, which implies that the coherent probability values of $C|D$ range in $\left[\left(\frac{1}{10}\right)^n,\left(\frac{9}{10}\right)^n\right]$.
\end{example}

\section{Fully disintegrable extensions}
\label{sec:disintegrable}
Given a strategy $\sigma$ on $\A \times \L$ and a finitely additive prior probability $\pi$ on $\A_\L$ defined as in Section~\ref{sec:coherent-ext}, it has already been stressed that the joint probability on $\A$ consistent with $\{\pi,\sigma\}$ is generally not unique.

When the function $\sigma(F|\cdot)$ is S-integrable with respect to $\pi$ for every $F \in \A$, there is \cite{dubins,regazzini-bayes,brr-bayes} a joint probability $P^\dd$ in $\P^\j$, defined for every $F\in\A$ as
\begin{equation}
P^\dd(F)=\int \sigma(F|H_i)\pi(\d H_i).
\label{eq:dis}
\end{equation}

To avoid cumbersome measurability requirements, in the rest of this section assume that, for every $F \in \A$, $\sigma(F|\cdot)$ is $\A_\L$-continuous. In particular, if $\A_\L = \langle\L\rangle^*$ then no measurability requirement has to be imposed on $\sigma(F|\cdot)$.

The main feature of $P^\dd$ is that it is {\it $\L$-disintegrable} \cite{dubins} with respect to the prior $\pi$ and the strategy $\sigma$, however, as claimed in \cite{brr-bayes}, this is just one of the possible joint probabilities on $\A$ consistent with $\{\pi,\sigma\}$. We refer to this particular element of $\P^\j$ as {\it $\L$-disintegrable joint probability}.

Since the assessment $\{P^\dd,\sigma\}$ is coherent, it can be extended further to $\A \times \A^0$.
The extension is uniquely determined for all the events $F|K \in \A \times \A^0$ such that $P^\dd(K) > 0$, while uniqueness is lost in case $P^\dd(K) = 0$, for this introduce the set
$$
\P^\dd = \{\tilde{P} \,:\, \mbox{f.c.p. on $\A$ extending $\{P^\dd,\sigma\}$}\},
$$
whose lower envelope is $\u{P}^\dd = \min\P^\dd$.

\begin{theorem}
\label{th:prior-lik-inf-dis}
The lower envelope $\u{P}^\dd(\cdot|\cdot)$ is such that, for every $F|K \in \A \times \A^0$, $\u{P}^\dd(F|K) = 1$ when $F \wedge K = K$, and if $F \wedge K \neq K$, then:
\begin{itemize}
\item[\it (i)] if $P^\dd(K) > 0$, then
$$
\u{P}^\dd(F|K) =\frac{P^\dd(F \wedge K)}{P^\dd(K)};
$$
\item[\it (ii)] if $P^\dd(K) = 0$, then
$$
\u{P}^\dd(F|K) =
\left\{
\begin{array}{ll}
\min\limits_{i \in I_2^{F|K}} \frac{\sigma(F \wedge K|H_i)}{\sigma(K|H_i)} & \mbox{if $I_2^{F|K} \neq \emptyset = I_3^{F|K}$, $\card I_2^{F|K} < \aleph_0$}\\
& \mbox{and $\sigma(K|H_i) > 0$ for all $i \in I_2^{F|K}$},\\[2ex]
0 & \mbox{otherwise},
\end{array}
\right.
$$
where
$I_1^{F|K} = \{i \in I \,:\, H_i \wedge F \wedge K \neq \emptyset =  H_i \wedge F^c \wedge K\}$,
$I_2^{F|K} = \{i \in I \,:\, H_i \wedge F \wedge K \neq \emptyset \neq  H_i \wedge F^c \wedge K\}$, and
$I_3^{F|K} = \{i \in I \,:\, H_i \wedge F \wedge K = \emptyset \neq  H_i \wedge F^c \wedge K\}$.
\end{itemize}
\end{theorem}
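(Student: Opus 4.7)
My plan is to split the argument according to the two cases in the statement. The trivial case $F\wedge K=K$ gives $\tilde P(F|K)=1$ for every f.c.p. directly from axiom (C1). For \textbf{case (i)}, I observe that every $\tilde P\in\P^\dd$ has restriction $\tilde P_{|\A\times\{\Omega\}}=P^\dd$, so axiom (C3) applied with conditioning event $\Omega$ yields $P^\dd(F\wedge K)=\tilde P(F|K)\,P^\dd(K)$; since $P^\dd(K)>0$, this forces $\tilde P(F|K)=P^\dd(F\wedge K)/P^\dd(K)$ uniformly across $\P^\dd$, so the envelope equals this common value.

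For \textbf{case (ii)}, because $P^\dd\in\P^\j$ and $P^\dd(K)=0$ imply $\u P^\j(K)=0$, Theorem~\ref{th:ext-prob-lik-inf}(ii) already delivers the claimed closed form as the envelope $\u P(F|K)$ of the larger class $\P\supseteq\P^\dd$. The inclusion gives $\u P^\dd(F|K)\ge\u P(F|K)$ for free, and to match the two envelopes I would exhibit, in each sub-case, a f.c.p.~in $\P^\dd$ attaining the claimed value, invoking Theorem~\ref{th:extension} to lift finite assignments to a full conditional probability. Concretely: (a) if $I_3^{F|K}\neq\emptyset$, fix $i^*\in I_3^{F|K}$ and take $\tilde P(\cdot|K)$ concentrated on $H_{i^*}\wedge K\subseteq F^c$, so $\tilde P(F|K)=0$; (b) if some $i^*\in I_2^{F|K}$ satisfies $\sigma(K|H_{i^*})=0$, concentrate $\tilde P(\cdot|K)$ on $H_{i^*}\wedge K$ and exploit that $\sigma(F\wedge K|H_{i^*})=0$ leaves $\tilde P(F|H_{i^*}\wedge K)$ free, choosing it equal to $0$; (c) if $\card I_2^{F|K}\ge\aleph_0$ with all $\sigma(K|H_i)>0$, build $\tilde P(\cdot|K)$ via a free ultrafilter on a countably infinite $\{i_k\}_{k\in\NN}\subseteq I_2^{F|K}$ so that $\tilde P(H_{i_k}\wedge K|K)=0$ for every $k$ yet the total mass sits on the $F^c$-part of the residual $K\setminus\bigvee_k H_{i_k}$, giving $\tilde P(F|K)=0$; (d) otherwise, pick $i^*\in I_2^{F|K}$ minimizing $\sigma(F\wedge K|H_i)/\sigma(K|H_i)$, concentrate $\tilde P(\cdot|K)$ on $H_{i^*}\wedge K$, and read off $\tilde P(F|K)=\sigma(F\wedge K|H_{i^*})/\sigma(K|H_{i^*})$ via (C3).

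The hardest sub-case will be \textbf{(c)}: the finite lower bound $\min_{k=1,\ldots,n}\sigma(F\wedge K|H_{i_k})/\sigma(K|H_{i_k})$ is strictly positive under the hypotheses, yet it fails to persist for infinite $I_2^{F|K}$ because the residual $K\setminus\bigvee_{k=1}^n H_{i_k}$ always meets both $F$ and $F^c$ (as $I_3^{F|K}=\emptyset$ and $I_2^{F|K}\setminus\{i_1,\ldots,i_n\}$ remains infinite), so an ultrafilter allocation can push all the weight onto its $F^c$-side. The remaining technical check -- that each constructed $\tilde P$ has marginal $P^\dd$ on $\A$, so belongs to $\P^\dd$ rather than only to $\P$ -- is routine once coherence of the enlarged assessment is verified, and follows from the coherence of $\{P^\dd,\sigma\}$ provided by \eqref{eq:dis} together with Theorem~\ref{th:extension}.
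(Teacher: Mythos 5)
Your overall strategy is sound and genuinely different from the paper's. The paper does not route through Theorem~\ref{th:ext-prob-lik-inf} at all: it recomputes $\u{P}^\dd(F|K)$ directly as a supremum over finite subfamilies of $\G=\A\times(\{\Omega\}\cup\L)$, solving for each finite subfamily the same linear optimization problem as in the proof of Theorem~\ref{th:ext-prob-lik-inf}{\it (ii)}, with the zero class $\C_1$ now determined by $P^\dd$ instead of $\u{P}$. Your sandwich argument --- $\u{P}^\dd(F|K)\ge\u{P}(F|K)$ from the inclusion $\P^\dd\subseteq\P$ (valid, since $P^\dd$ extends $\pi$), plus a witness in $\P^\dd$ attaining the value given by Theorem~\ref{th:ext-prob-lik-inf}{\it (ii)} --- is a legitimate shortcut that reuses the earlier theorem; its price is that the witnesses must actually be constructed and shown coherent, which is exactly the content the paper's optimization problem encodes. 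Your case {\it (i)} and sub-cases (a), (b), (d) are fine.

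Sub-case (c) contains a genuine gap. The measure $\tilde P(\cdot|K)$ cannot ``sit on the $F^c$-part of the residual $K\wedge(\bigvee_k H_{i_k})^c$'': first, $\bigvee_{k\in\NN}H_{i_k}$ need not belong to $\A$; second, the residual may be empty (take $I_2^{F|K}$ countable and $\{i_k\}_{k\in\NN}=I_2^{F|K}$); third, even when nonempty the residual meets other $H_i$ with $i\in I_2^{F|K}$ and $\sigma(K|H_i)>0$, so placing positive mass on such an $H_i\wedge K$ re-imposes the proportionality $\tilde P(F\wedge K\wedge H_i|K)=\tilde P(H_i\wedge K|K)\,\sigma(F\wedge K|H_i)/\sigma(K|H_i)$, and a convex combination of these ratios need not vanish. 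The correct witness is \emph{diffuse along} the $H_{i_k}$, not supported off them: pick for each $k$ an atom $C_k\subseteq H_{i_k}\wedge F^c\wedge K$ of $\A$ and set $\tilde P(A|K)=\mathbf{1}\left[\{k\,:\,C_k\subseteq A\}\in\U\right]$ for a free ultrafilter $\U$ on $\NN$. This is a finitely additive $0$--$1$ probability with $\tilde P(K|K)=\tilde P(F^c\wedge K|K)=1$ and $\tilde P(H_i\wedge K|K)=0$ for \emph{every} $i$, so all the {\bf (C3)} constraints degenerate to $0=0$ and $\tilde P(F|K)=0$. With that repair, and an honest verification (via Theorem~\ref{th:extension} applied to finite subfamilies, or via compactness of $\P^\dd$) that each prescribed $\tilde P(\cdot|K)$ extends to a member of $\P^\dd$ --- the standard zero-layer freedom available because $P^\dd(K)=0$ --- your proof goes through.
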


Even restricting to the $\L$-disintegrable joint probability $P^\dd$, the set of extensions of $\{P^\dd,\sigma\}$ on $\A \times \A^0$ can give rise to non-informative probability bounds (i.e., reducing to $0$ and $1$, respectively) for a large class of conditional events.

For this, the aim now is to restrict further the set of coherent extensions of $\{P^\dd,\sigma\}$ by selecting only those extensions on $\A \times \A^0$ satisfying the following stronger notion of $\L$-disintegrability.

\begin{definition}
A full conditional probability $\tilde{Q}(\cdot|\cdot)$ on $\A$ extending $\{\pi,\sigma\}$ is {\bf fully $\L$-disintegrable} if, denoting with $\tilde{\pi}^\p = \tilde{Q}_{|\A_\L \times \A_\L^0}$, for every $F|K \in \A \times \A_\L^0$ it holds
\begin{equation}
\tilde{Q}(F|K) = \int \sigma(F|H_i)\tilde{\pi}^\p(\d H_i|K).
\end{equation}
\end{definition}

Hence, let us consider the set
$$
\Q^\fd = \{\tilde{Q} \,:\, \mbox{fully $\L$-disintegrable f.c.p. on $\A$ extending $\{\pi,\sigma\}$}\},
$$
whose topological structure is investigated in the following theorem.

\begin{theorem}
\label{th:compact}
The set $\Q^\fd$ is a non-empty compact subset of the space $[0,1]^{\A \times \A^0}$ endowed with the product topology of pointwise convergence.
\end{theorem}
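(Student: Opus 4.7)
The plan is to prove non-emptiness by an explicit construction of one fully $\L$-disintegrable extension, and then obtain compactness by showing $\Q^\fd$ is closed inside the compact set $\P \subseteq [0,1]^{\A \times \A^0}$ of all f.c.p.~extensions of $\{\pi,\sigma\}$. To build the explicit extension I would first apply Theorem~\ref{th:extension} to the prior $\pi$ (viewed as a coherent conditional probability on $\A_\L \times \{\Omega\}$) to obtain a full conditional probability $\tilde{\pi}^\p$ on $\A_\L \times \A_\L^0$. Since $\sigma(F|\cdot)$ is assumed $\A_\L$-continuous, it is Stieltjes integrable with respect to every $\tilde{\pi}^\p(\cdot|K)$, and I set
$$
\tilde{Q}(F|K) = \int \sigma(F|H_i)\,\tilde{\pi}^\p(\d H_i|K), \qquad F|K \in \A \times \A_\L^0.
$$
One checks that $\tilde{Q}(\cdot|K)$ is a finitely additive probability on $\A$ by linearity of the Stieltjes integral, that $\tilde{Q}(F|K) = \tilde{\pi}^\p(F|K)$ when $F \in \A_\L$ (since $\sigma(F|H_i) = {\bf 1}_F(H_i)$ in that case, so $\tilde{Q}$ restricted to $\A_\L$ extends $\pi$), and that $\tilde{Q}(F|H_i) = \sigma(F|H_i)$ because $\tilde{\pi}^\p(\cdot|H_i)$ is concentrated on $H_i$. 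The chain rule (C3) on $\A \times \A_\L^0$ then follows from the chain rule for $\tilde{\pi}^\p$ combined with a finite-partition approximation of the Stieltjes integral. A further application of Theorem~\ref{th:extension} extends $\tilde{Q}$ to a full conditional probability on $\A \times \A^0$ which, by construction, lies in $\Q^\fd$.

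For compactness, since $\P$ is compact in the product topology by the general fact recalled in Section~\ref{sec:preliminaries}, it suffices to show $\Q^\fd$ is closed in $\P$. Let $(\tilde{Q}_\alpha) \subseteq \Q^\fd$ converge pointwise to some $\tilde{Q} \in \P$, set $\tilde{\pi}_\alpha^\p = \tilde{Q}_{\alpha | \A_\L \times \A_\L^0}$ and $\tilde{\pi}^\p = \tilde{Q}_{| \A_\L \times \A_\L^0}$, and fix $F|K \in \A \times \A_\L^0$. For every finite partition $\L^\F = \{A_u\}_{u=1}^m \subseteq \A_\L$, the Stieltjes integral representation of $\tilde{Q}_\alpha$ yields
$$
\sum_{u=1}^m \Bigl(\inf_{H_i \subseteq A_u}\sigma(F|H_i)\Bigr)\tilde{\pi}_\alpha^\p(A_u|K) \le \tilde{Q}_\alpha(F|K) \le \sum_{u=1}^m \Bigl(\sup_{H_i \subseteq A_u}\sigma(F|H_i)\Bigr)\tilde{\pi}_\alpha^\p(A_u|K).
$$
Both bounds are finite linear combinations, so passing to the pointwise limit in $\alpha$ preserves the inequalities; then taking supremum and infimum over $\L^\F$ sandwiches $\tilde{Q}(F|K)$ between the lower and upper Stieltjes integrals of $\sigma(F|\cdot)$ against $\tilde{\pi}^\p(\cdot|K)$. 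By $\A_\L$-continuity these two integrals coincide, giving $\tilde{Q}(F|K) = \int \sigma(F|H_i)\,\tilde{\pi}^\p(\d H_i|K)$, which shows $\tilde{Q} \in \Q^\fd$.

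The main obstacle I anticipate lies in the coherence verification of the non-emptiness step, specifically the chain rule (C3) for $\tilde{Q}$ on $\A \times \A_\L^0$: reconciling $\int \sigma(E \wedge F|H_i)\,\tilde{\pi}^\p(\d H_i|K)$ with $\tilde{Q}(E|K)\int \sigma(F|H_i)\,\tilde{\pi}^\p(\d H_i|E \wedge K)$ through the chain rule of $\tilde{\pi}^\p$ requires a careful passage via finite partitions of $\A_\L$ and a separate treatment of the degenerate case $\tilde{\pi}^\p(E \wedge K|K) = 0$, where the product on the right is defined by coherence but is not directly encoded by the integral formula.
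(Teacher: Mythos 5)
Your proposal is correct and follows essentially the same route as the paper: non-emptiness is obtained by the Stieltjes-integral construction $\int \sigma(F|H_i)\,\tilde{\pi}^\p(\d H_i|K)$ against a coherent conditional prior extending $\pi$ (the paper's Proposition~\ref{prop:fully-L-dis}) followed by a coherent extension to $\A \times \A^0$, and compactness by showing $\Q^\fd$ is closed under pointwise limits of nets inside the compact set of all coherent extensions. The one cosmetic difference is that where you pass to the limit in the integrals by sandwiching between finite Riemann--Stieltjes sums, the paper invokes a convergence theorem of Girotto--Holzer; both work under the standing $\A_\L$-continuity assumption, and your worry about the degenerate case in (C3) is unfounded since $\tilde{P}^\fd(E|K)=0$ forces both sides of the product rule to vanish by monotonicity.
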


The following result characterizes the lower envelope $\u{Q}^\fd = \min \Q^\fd$, relying on the following functions, defined for $F \in \A$, $K \in \A^0$ and $A \in \A_\L^0$ with $K \subseteq A$ as
\begin{eqnarray}
L^\fd(F, K;A) &=& \min\left\{\tilde{Q}(F \wedge K|A) \,:\, \tilde{Q} \in \Q^\fd,\right.\label{eq:Ld-full}\\\nonumber
&&~~~~~~~~~~~~\left.\tilde{Q}(F^c \wedge K|A) = \o{Q}^\fd(F^c \wedge K|A)\right\},\\
U^\fd(F, K;A) &=& \max\left\{\tilde{Q}(F \wedge K|A) \,:\, \tilde{Q} \in \Q^\fd,\right.\label{eq:Ud-full}\\\nonumber
&&~~~~~~~~~~~~\left.\tilde{Q}(F^c \wedge K|A) = \u{Q}^\fd(F^c \wedge K|A)\right\},
\end{eqnarray}
that can be equivalently expressed as in equations (\ref{eq:Ld}) and (\ref{eq:Ud}) in \ref{appendix1}, and
for which it holds
$$
\u{Q}^\fd(F \wedge K|A) \le L^\fd(F, K;A) \le U^\fd(F, K;A) \le \o{Q}^\fd(F \wedge K|A).
$$

\begin{theorem}
\label{th:fully-L-dis-fcp}
The lower envelope $\u{Q}^\fd(\cdot|\cdot)$ is such that for every $F|K \in \A \times \A^0$, $\u{Q}^\fd(F|K) = 1$ when $F \wedge K = K$, and if $F \wedge K \neq K$, then
\begin{itemize}
\item[\it (i)] if $K \in \A_\L^0$
$$
\u{Q}^\fd(F|K) =
\left\{
\begin{array}{ll}
\displaystyle{\frac{\int \sigma(F \wedge K|H_i) \pi(\d H_i)}{\pi(K)}} & \mbox{if $\pi(K) > 0$},\\[4ex]
\inf\limits_{H_i \subseteq K} \sigma(F|H_i) & \mbox{otherwise},
\end{array}
\right.
$$
\item[\it (ii)] if $K \in \A^0 \setminus \A_\L^0$, then if there exists $A \in \A_\L^0$ such that $K \subseteq A$ and $\u{Q}^\fd(K|A) > 0$ we have that
\begin{eqnarray*}
\u{Q}^\fd(F|K) &=& \min\left\{\frac{\u{Q}^\fd(F \wedge K|A)}{\u{Q}^\fd(F \wedge K|A) + U^\fd(F^c,K;A)},\right.\\
&&~~~~~~~~~~~~\left.\frac{L^\fd(F, K;A)}{L^\fd(F, K;A) + \o{Q}^\fd(F^c \wedge K|A)}\right\},
\end{eqnarray*}
otherwise $\u{Q}^\fd(F|K) = 0$.
\end{itemize}
\end{theorem}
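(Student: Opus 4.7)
My plan is to proceed by case analysis mirroring the statement, exploiting that $\Q^\fd$ is a non-empty compact subset of $[0,1]^{\A\times\A^0}$ by Theorem~\ref{th:compact}, so every lower envelope is actually attained. The case $F\wedge K=K$ is immediate from normalization, so I concentrate on the rest.

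For case (i), $K\in\A_\L^0$: full $\L$-disintegrability gives $\tilde Q(F|K)=\int\sigma(F|H_i)\,\tilde\pi^\p(\d H_i|K)$ for every $\tilde Q\in\Q^\fd$, where $\tilde\pi^\p=\tilde Q_{|\A_\L\times\A_\L^0}$ extends $\pi$. When $\pi(K)>0$, axiom (C3) forces $\tilde\pi^\p(\cdot|K)=\pi(\cdot\wedge K)/\pi(K)$ on $\A_\L$, so the integral is independent of $\tilde Q$; combining $\sigma(F|H_i)=\sigma(F\wedge K|H_i)$ for $H_i\subseteq K$ with $\sigma(F\wedge K|H_i)=0$ for $H_i\wedge K=\emptyset$, together with the concentration of $\tilde\pi^\p(\cdot|K)$ on $K$, yields the displayed ratio. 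When $\pi(K)=0$, coherence leaves $\tilde\pi^\p(\cdot|K)$ free among all finitely additive probabilities on $\A_\L$ concentrated on $K$; minimizing the integral gives $\inf_{H_i\subseteq K}\sigma(F|H_i)$ by concentrating mass on suitable $H_i\in\L$, and a disintegration construction patterned on \eqref{eq:dis}, fed with this extremal conditional prior, produces a member of $\Q^\fd$ attaining the bound (attainment being guaranteed by Theorem~\ref{th:compact}).

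For case (ii), $K\in\A^0\setminus\A_\L^0$: pick $A\in\A_\L^0$ containing $K$ (e.g.\ $A=\Omega$). Whenever $\tilde Q(K|A)>0$, property (C3) yields
\begin{equation*}
\tilde Q(F|K)\;=\;\frac{\tilde Q(F\wedge K|A)}{\tilde Q(F\wedge K|A)+\tilde Q(F^c\wedge K|A)},
\end{equation*}
so the problem reduces to minimizing $g(x,y)=x/(x+y)$ over the joint range of $\bigl(\tilde Q(F\wedge K|A),\tilde Q(F^c\wedge K|A)\bigr)$ as $\tilde Q$ varies in $\Q^\fd$. The argument here is designed to mirror the proof of Theorem~\ref{th:ext-prob-lik-inf}, with $A$ playing the role of $\Omega$ and the envelopes of $\Q^\fd$ replacing those of $\P^\j$. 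Since $g$ is non-decreasing in $x$ and non-increasing in $y$, its minimum is attained at one of the two corner pairs $\bigl(\u{Q}^\fd(F\wedge K|A),U^\fd(F^c,K;A)\bigr)$ or $\bigl(L^\fd(F,K;A),\o{Q}^\fd(F^c\wedge K|A)\bigr)$, both realized by some extension thanks to the definitions \eqref{eq:Ld-full}--\eqref{eq:Ud-full} and compactness; the smaller of the resulting two ratios is the stated formula. When instead $\u{Q}^\fd(K|A)=0$ for every admissible $A$, I would use the freedom in choosing $\tilde\pi^\p(\cdot|A)$ granted by the $\pi(A)=0$ branch of case (i) to build, for each $\varepsilon>0$, an extension $\tilde Q\in\Q^\fd$ with $\tilde Q(K|A)>0$ and $\tilde Q(F\wedge K|A)/\tilde Q(K|A)<\varepsilon$, thereby showing $\u{Q}^\fd(F|K)=0$.

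The main obstacle will be justifying, in case (ii), that the minimum of $g$ over the joint range of $\bigl(\tilde Q(F\wedge K|A),\tilde Q(F^c\wedge K|A)\bigr)$ is attained at one of the two specific corner pairs rather than at some interior point. Because $\Q^\fd$ need not be convex (axiom (C3) is multiplicative), a Bauer-type argument cannot be invoked directly; instead the proof will lean on the monotonicity of $g$ together with the observation that $L^\fd$ and $U^\fd$ are defined precisely to couple an extreme value of one coordinate with the best admissible value of the other, so any interior pair can be pushed towards one of the two corners without increasing $g$. A subsidiary technical step is confirming that each corner pair is simultaneously realized by some $\tilde Q\in\Q^\fd$, which ultimately rests on Theorem~\ref{th:compact} and a disintegration construction with the prescribed marginal on $\A_\L$.
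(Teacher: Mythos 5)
Your treatment of the case $F\wedge K=K$, of case \emph{(i)}, and of the first branch of case \emph{(ii)} follows essentially the same route as the paper: case \emph{(i)} is the paper's Lemma~\ref{lem:lower-fully-L-dis} (the conditional prior $\tilde\pi^\p(\cdot|K)$ is pinned when $\pi(K)>0$ and vacuous when $\pi(K)=0$, so the lower envelope is a Choquet integral against the vacuous capacity, i.e.\ the infimum over $H_i\subseteq K$), and the first branch of \emph{(ii)} is the same monotonicity-of-$x/(x+y)$ argument via $L^\fd$ and $U^\fd$. Your stated worry about whether the minimum really sits at one of the two corner pairs is legitimate, but the paper makes exactly the same step with exactly the same justification, so this is not a point of divergence.

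The genuine gap is in the ``otherwise'' branch of case \emph{(ii)}, where $\u{Q}^\fd(K|A)=0$ for every $A\in\A_\L^0$ with $K\subseteq A$. Your plan --- exploit the freedom in $\tilde\pi^\p(\cdot|A)$ available on the $\pi(A)=0$ branch to manufacture, for each $\varepsilon>0$, an extension with $\tilde Q(K|A)>0$ and ratio below $\varepsilon$ --- does not work in general, because the hypothesis is $\u{Q}^\fd(K|A)=0$, not $\pi(A)=0$. For $A=\Omega$ (or any $A$ with $\pi(A)>0$) full $\L$-disintegrability forces $\tilde Q(K|A)=\int\sigma(K\wedge A|H_i)\pi(\d H_i)/\pi(A)$ for \emph{every} $\tilde Q\in\Q^\fd$; if this value is $0$ there is simply no member of $\Q^\fd$ with $\tilde Q(K|A)>0$ and nothing to perturb. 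In that regime $\tilde Q(F|K)$ is not determined by any ratio: it is a free coherent extension of the fully $\L$-disintegrable part, and one must prove that the value $0$ is among the coherent ones. The paper does this with two ingredients you are missing: first, a finite-intersection-property compactness argument inside $\P^\fd$ (in the proof of Lemma~\ref{lem:lower-fully-L-dis-full}) producing a \emph{single} $\tilde P^\fd_0\in\P^\fd$ with $\tilde P^\fd_0(K|A)=0$ simultaneously for all $A\in\A_\L^0$ containing $K$ --- having, for each $A$ separately, some extension vanishing at $K|A$ is not enough, since a given extension could be positive at $K|A'$ for another $A'$ and then be pinned there; second, Lemma~\ref{lem:full-fully-L-dis}, which shows that for such a $\tilde P^\fd_0$ the further coherent extension to $F|K$ admits the value $0$. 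Without these two steps your argument does not establish $\u{Q}^\fd(F|K)=0$ in this branch.
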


Note that, in condition {\it (i)} of Theorem~\ref{th:fully-L-dis-fcp}, $\u{Q}^\fd(\cdot|\cdot)$ can be expressed as a suitable Choquet integral (see Lemma~\ref{lem:lower-fully-L-dis} in \ref{appendix1}).

Next example reviews Example~\ref{ex:coerenti} focusing only on the fully $\L$-disintegrable extensions.

\begin{example}
\label{ex:fully-L-dis}
Consider the situation described in Example~\ref{ex:coerenti}. Since $\A_\L = \langle\L\rangle^*$ it immediately follows that for every $F \in \A$, $\sigma(F|\cdot)$ is an $\A_\L$-continuous function on $\L$.

For the conditional event $A|B$ (see also \cite{regazzini-unimi}), since $B \notin \A_\L^0$, it holds
\begin{eqnarray*}
\u{Q}^\fd(B|\Omega) &=& \o{Q}^\fd(B|\Omega) = \int \sigma(B|H_\theta)\pi(\d H_\theta) = \frac{1}{n+1},\\
\u{Q}^\fd(A \wedge B|\Omega) &=& \o{Q}^\fd(A \wedge B|\Omega) = \int \sigma(A \wedge B|H_\theta)\pi(\d H_\theta) =
\frac{\theta_2^{n+1} - \theta_1^{n+1}}{n+1}.
\end{eqnarray*}
Thus we have
$
\u{Q}^\fd(A \wedge B|\Omega) + U^\fd(A^c,B;\Omega) = L^\fd(A, B;\Omega) + \o{Q}^\fd(A^c \wedge B|\Omega) = \frac{1}{n+1},
$
which implies $\u{Q}^\fd(A|B) = \o{Q}^\fd(A|B) = \theta_2^{n+1} - \theta_1^{n+1}$.

For the conditional event $C|D$, since $D \in \A_\L^0$ and $\pi(D) = 0$, it follows $\u{Q}^\fd(C|D) = \left(\frac{1}{10}\right)^n$, $\u{Q}^\fd(C^c|D)= 1 - \left(\frac{9}{10}\right)^n$, and so $\o{Q}^\fd(C|D) = \left(\frac{9}{10}\right)^n$ that coincide with the probability bounds determined by the whole set of coherent extensions.
\end{example}

Recall that Theorem~\ref{th:compact} implies $\u{Q}^\fd$ is attained pointwise by at least an extension in $\Q^\fd$ so, in particular, there is at least an extension in $\Q^\fd$ assuming the infimum in condition {\it (i)} of Theorem~\ref{th:fully-L-dis-fcp} as shown by next example.
\begin{example}
Consider the partitions $\L = \{H_i\}_{i \in \NN}$ and $\E = \{E_1,E_2\}$ with $H_i \wedge E_j \neq \emptyset$ for every $i,j$. Take $\A_\L = \langle\L\rangle^*$, $\A_\E = \langle\E\rangle$, $\A = \langle\A_\L \cup \A_\E\rangle$ and let $\U$ be an ultrafilter of $\A_\L$ containing $B = \bigvee_{i \in \NN} H_{2i-1}$. 

Let $\pi$ the finitely additive prior probability defined for $K \in \A_\L$ as $\pi(K)=1$ if $K \in \U$ and $0$ otherwise, and $\lambda$ the statistical model on $\A_\E \times \L$ singled out for $i \in \NN$ by
$\lambda(E_1|H_i) = \frac{1}{2} + \frac{1}{2i}$ and $\lambda(E_2|H_i) = 1 - \lambda(E_1|H_i)$,
which extends uniquely to a strategy $\sigma$ on $\A \times \L$. 

Since $\pi(B^c) = 0$ it holds
\begin{eqnarray*}
\u{Q}^\fd(E_1|B^c) &=& \inf\limits_{H_i \subseteq B^c} \sigma(E_1|H_i) = \frac{1}{2},\\
\o{Q}^\fd(E_1|B^c) &=& 1 - \u{Q}^\fd(E_2|B^c) = 1 - \inf\limits_{H_i \subseteq B^c} \sigma(E_2|H_i) = \frac{3}{4}.
\end{eqnarray*}
So, there exist $\tilde{Q}_1, \tilde{Q}_2 \in \Q^\fd$ such that $\tilde{Q}_1(E_1|B^c) = \frac{1}{2}$ and $\tilde{Q}_2(E_1|B^c) = \frac{3}{4}$.
\end{example}

Let us consider the case where $H_i \wedge E_j \neq \emptyset$ for every $i,j$, $\A_\L$ and $\A_\E$ are Boolean $\sigma$-algebras, and $\A = \langle\A_\L \cup \A_\E\rangle^\sigma$. If $\pi$ is countably additive on $\A_\L$ and $\lambda$ is a statistical model on $\A_\E \times \L$ such that $\lambda(\cdot|H_i)$ is countably additive and absolutely continuous with respect to the same $\sigma$-finite measure $\mu$ on $\A_\E$ for every $H_i \in \L$, then $l(\cdot;H_i)$ is the Radon-Nikodym derivative of $\lambda(\cdot|H_i)$ with respect to $\mu$. Under previous hypotheses the function $\lambda$ is also called a {\it transition kernel} and $l(E_j;\cdot)$ is assumed to be $\A_\L$-measurable for every $E_j \in \E$.

In general, Proposition~\ref{prop:uniqueness-strategy} only implies that $\lambda$ uniquely extends to a strategy on
$\langle\A_\L \cup \A_\E\rangle \times \L$. Nevertheless, since $\A_\L$ and $\A_\E$ are Boolean $\sigma$-algebras and $\A$ is the Boolean $\sigma$-algebra generated by them, fixing $H_i \in \L$, for every $F \in \A$ there exists $F_{H_i} \in \A_\E$ such that $F \wedge H_i = F_{H_i}  \wedge H_i$. Thus for every strategy $\sigma$ on $\A \times \L$ extending $\lambda$ it must hold
$$
\sigma(F|H_i) = \sigma(F \wedge H_i|H_i) = \sigma(F_{H_i} \wedge H_i|H_i) = \sigma(F_{H_i} |H_i) = \lambda(F_{H_i} |H_i),
$$
so $\sigma$ is uniquely determined by $\lambda$, $\sigma(\cdot|H_i)$ is countably additive on $\A$ for every $H_i \in \L$,
and $\sigma(F|\cdot)$ is bounded and $\A_\L$-measurable for every $F \in \A$. In turn, this implies
$\u{Q}^\fd(\cdot|\Omega) = \o{Q}^\fd(\cdot|\Omega)$ is countably additive on $\A$, moreover,
for every $B|E_j \in \A_\L \times \E$  such that $0 < \int l(E_j;H_i) \pi(\d H_i) < +\infty$ it holds
$$
\u{Q}^\fd(B|E_j) \le \frac{\int l(E_j;H_i) {\bf 1}_{B}(H_i) \pi(\d H_i)}{\int l(E_j;H_i) \pi(\d H_i)}
\le \o{Q}^\fd(B|E_j),
$$
where the involved integrals are in the Lebesgue sense.
If further $\pi$ is absolutely continuous with respect to a $\sigma$-finite measure $\nu$ on $\A_\L$, then $p$ is the Radon-Nikodym derivative of $\pi$ with respect to $\nu$ and we obtain 
that the usual statement of Bayes theorem for densities produces a coherent value \cite{brr-bayes} even if the inequalities above may be strict.

\begin{example}
Consider two random variables $\Theta$ and $X$ ranging, respectively, on ${\bf \Theta} = {\bf X} = [0,1]$, and let $\L = \{H_\theta = (\Theta = \theta) \;:\, \theta \in {\bf \Theta}\}$ and 
$\E = \{E_x = (X = x) \;:\, x \in {\bf X}\}$ with $H_\theta \wedge E_x \neq \emptyset$ for every $\theta,x$. Let $\A_\L$ and $\A_\E$ be isomorphic to the Borel $\sigma$-fields on ${\bf \Theta}$ and ${\bf X}$, respectively, and $\A = \langle\A_\L \cup \A_\E\rangle^\sigma$.

Let $\pi$ and $\lambda(\cdot|H_\theta)$, for every $\theta \in {\bf \Theta}$, coincide with the Lebesgue measure on $[0,1]$. The statistical model extends uniquely to a strategy $\sigma$ on $\A \times \L$.

We want to compute $\u{Q}^\fd(B|E_{0.5})$ where $B = (\Theta \in [0,0.5])$ and $E_{0.5} = (X = 0.5)$. In order to be $\u{Q}^\fd(B|E_{0.5}) \neq 0$ there must exist $A \in \A_\L^0$ such that $\u{Q}^\fd(E_{0.5}|A) > 0$ where
$$
\u{Q}^\fd(E_{0.5}|A) = \left\{
\begin{array}{ll}
\frac{\int \sigma(E_{0.5} \wedge A|H_\theta) \pi(\d H_\theta)}{\pi(A)} = \frac{\int \lambda(E_{0.5}|H_\theta) {\bf 1}_A(H_\theta) \pi(\d H_\theta)}{\pi(A)} & \mbox{if $\pi(A) > 0$},\\[2ex]
\inf\limits_{H_\theta \subseteq A} \lambda(E_{0.5}|H_\theta) & \mbox{otherwise}.
\end{array}
\right.
$$

Notice that $l(E_x ; H_\theta) = 1$ for every $x,\theta$ and $\int l(E_x ; H_\theta) \pi(\d H_\theta) = 1 \in (0,+\infty)$.
Since $\lambda(E_{0.5}|H_\theta) = \int_{0.5}^{0.5} \d x = 0$, for every $\theta \in {\bf \Theta}$, it trivially holds that $\u{Q}^\fd(E_{0.5}|A) = 0$ for every $A \in \A_\L^0$, so it must be $\u{Q}^\fd(B|E_{0.5}) = 0$. Similarly, it is possible to show that $\o{Q}^\fd(B|E_{0.5}) = 1 - \u{Q}^\fd(B^c|E_{0.5}) = 1$.
\end{example}


\section{Fully strongly conglomerable extensions}
\label{sec:conglomerable}
Consider a strategy $\sigma$ on $\A \times \L$ and a finitely additive prior probability $\pi$ on $\A_\L$ as in Section~\ref{sec:coherent-ext}.

Now we focus on strongly $\L$-conglomerable joint probabilities consistent with $\{\pi,\sigma\}$, according to the following definition \cite{brr-bayes}.

\begin{definition}
\label{def:cong}
A joint probability $\tilde{P}^\j$ in $\P^\j$ is {\bf strongly $\L$-conglome\-rable} if for every $F \in \A$ and $B \in \A_\L$ it holds
\begin{equation}
\pi(B)\inf_{H_i \subseteq B} \sigma(F|H_i) \le \tilde{P}^\j(F \wedge B) \le \pi(B)\sup_{H_i \subseteq B} \sigma(F|H_i).
\label{eq:cong}
\end{equation}
\end{definition}

Strong $\L$-conglomerability implies the classical notion of conglomerability introduced by de Finetti in \cite{dF-cong} in which condition (\ref{eq:cong}) is asked to hold only for $B = \Omega$. It is well-known that the notion of conglomerability due to Dubins \cite{dubins} is stronger than the one due to de Finetti and, under the assumption $\A_\L = \langle\L\rangle^*$, such notion is proved to be equivalent to $\L$-disintegrability of the joint probability on $\A$.

In \cite{brr-bayes} the authors analyse Dubins' notion of conglomerability without the assumption $\A_\L = \langle\L\rangle^*$ and show that $\L$-disintegrability for the joint probability is equivalent to its strong $\L$-conglomerability plus the S-integrability of $\sigma(F|\cdot)$ with respect to $\pi$, for $F \in \A$. Hence, under hypothesis $\A_\L = \langle\L\rangle^*$, Definition~\ref{def:cong} is equivalent to conglomerability in the sense of Dubins.

The notion of disintegrability introduced in previous section essentially relies in considering, for every $F \in \A$, $\sigma(F|\cdot)$ as a bounded function on $\L$ and in expressing the joint probability $P(F)$ as an average of $\sigma(F|\cdot)$ with respect to $\pi$. Hence, the S-integrability of $\sigma(F|\cdot)$ with respect to $\pi$ turns out to be fundamental.

Nevertheless, for $F \in \A$, a strategy $\sigma(F|\cdot)$ can always be considered as a (possibly not S-integrable with respect to $\pi$) bounded function on $\L$. In this case it is possible to define the lower and upper Stieltjes integrals
$$
\u{\int}\sigma(F|H_i)\pi(\d H_i)
\quad\mbox{and}\quad
\o{\int}\sigma(F|H_i)\pi(\d H_i).
$$

In particular, if $\sigma(F|\cdot)$ is S-integrable with respect to $\pi$ for every $F \in \A$, then Theorem~1.6 in \cite{brr-bayes} implies that
$$
P^\dd(F) = \int\sigma(F|H_i)\pi(\d H_i) = \u{\int}\sigma(F|H_i)\pi(\d H_i) = \o{\int}\sigma(F|H_i)\pi(\d H_i)
$$
and that $P^\dd$ is the unique strongly $\L$-conglomerable joint probability on $\A$ consistent with $\{\pi,\sigma\}$ which is also the unique $\L$-disintegrable one. This highlights that strong $\L$-conglomerability can be considered as a weakening of $\L$-disintegrability.

Our aim is to prove that in case $\sigma(F|\cdot)$ is not S-integrable with respect to $\pi$ for every $F \in \A$, i.e., necessarily $\A_\L \neq \langle\L\rangle^*$, then $\u{\int}\sigma(F|H_i)\pi(\d H_i)$ and $\o{\int}\sigma(F|H_i)\pi(\d H_i)$ bound the set of strongly $\L$-conglomerable joint probabilities on $\A$ consistent with $\{\pi,\sigma\}$.

First we show that the lower and upper S-integrals produce coherent values for the probability of each $F \in \A$.
\begin{proposition}
\label{prop:bounds-sc}
For every $F \in \A$ it holds
$$
\u{P}^\j(F) \le \u{\int} \sigma(F|H_i)\pi(\d H_i) \quad \mbox{and} \quad \o{\int} \sigma(F|H_i)\pi(\d H_i) \le \o{P}^\j(F).
$$
\end{proposition}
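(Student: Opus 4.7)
The plan is to exploit the explicit characterisation of $\u{P}^\j(F)$ supplied by Theorem~\ref{th:joint} and compare it directly with the defining expression of the lower Stieltjes integral recalled in Section~\ref{sec:preliminaries}. Both quantities are suprema over the same family, namely the finite partitions of $\Omega$ contained in $\A_\L$, so it is enough to bound the two partition-indexed quantities term by term and then pass to the supremum.

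Fix a finite partition $\L^\F \subseteq \A_\L$ and decompose it as $\{H_{i_h}\}_{h=1}^n \cup \{B_k\}_{k=1}^t$, where the $H_{i_h}$'s are the pieces which happen to lie in $\L$ and the $B_k$'s are the remaining pieces (each a union of elements of $\L$, since $\langle\L\rangle \subseteq \A_\L \subseteq \langle\L\rangle^*$). I would then analyse the value of $\inf_{H_i \subseteq A_u}\sigma(F|H_i)$ appearing in the lower Stieltjes sum for each type of piece. For $A_u = H_{i_h}$ the only element of $\L$ contained in it is $H_{i_h}$ itself, so the infimum equals $\sigma(F|H_{i_h})$ and its contribution coincides exactly with the corresponding term in Theorem~\ref{th:joint}. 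For $A_u = B_k \subseteq F$, every $H_i \subseteq B_k$ satisfies $H_i \wedge F = H_i$, hence $\sigma(F|H_i) = 1$ by (S1), and the two contributions agree and equal $\pi(B_k)$. Finally, for $A_u = B_k$ with $B_k \not\subseteq F$ the expression in Theorem~\ref{th:joint} contributes $0$, whereas the Stieltjes side contributes the non-negative quantity $\inf_{H_i \subseteq B_k}\sigma(F|H_i)\cdot\pi(B_k)$. Summing over $u$ and taking the supremum over $\L^\F$ yields the first inequality.

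For the upper bound I would invoke duality twice. The envelopes satisfy $\o{P}^\j(F) = 1 - \u{P}^\j(F^c)$, and writing $\sup_{H_i \subseteq A_u}\sigma(F|H_i) = 1 - \inf_{H_i \subseteq A_u}\sigma(F^c|H_i)$ together with $\sum_u \pi(A_u) = 1$ gives the companion identity $\o{\int}\sigma(F|H_i)\pi(\d H_i) = 1 - \u{\int}\sigma(F^c|H_i)\pi(\d H_i)$ after taking the infimum over partitions. Applying the already established first bound to $F^c$ then delivers the second bound. The only mildly delicate point is the termwise domination in the case $B_k \subseteq F$, which rests on the fact that each $B_k \in \A_\L$ is a union of atoms of $\L$ and hence inherits property (S1) pointwise; beyond this the argument is purely a comparison of two suprema over an identical directed family.
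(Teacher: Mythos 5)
Your argument is correct and matches the paper's proof essentially step for step: the same decomposition of a finite partition into pieces lying in $\L$ and the remaining $B_k$'s, the same termwise comparison (with the infimum collapsing to $\sigma(F|H_{i_h})$ on singletons, equality via (S1) when $B_k \subseteq F$, and nonnegative slack otherwise), followed by the supremum over the common directed family. The only difference is that the paper dismisses the upper bound as "similar," whereas you derive it explicitly via the duality $\o{P}^\j(F) = 1 - \u{P}^\j(F^c)$ and the conjugacy of the upper and lower Stieltjes integrals, which is a valid and slightly more economical way to finish.
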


In general it holds $\u{P}^\j(F) < \u{\int} \sigma(F|H_i)\pi(\d H_i)$ and $\o{\int} \sigma(F|H_i)\pi(\d H_i) < \o{P}^\j(F)$: this is trivial when $\sigma(F|\cdot)$ is S-integrable with respect to $\pi$.

Next result provides a topological characterization of the set $\P^{\sc} \subseteq \P^\j$ of strongly $\L$-conglomerable
joint probabilities on $\A$ consistent with $\{\pi,\sigma\}$.

\begin{theorem}
\label{th:sc}
The set $\P^\sc$ is a non-empty compact subset of the space $[0,1]^\A$ endowed with the product topology of pointwise convergence and has envelopes $\u{P}^{\sc} = \min \P^\sc$ and $\o{P}^\sc = \max \P^\sc$ defined for every $F \in \A$ as
$$
\u{P}^\sc(F) = \u{\int} \sigma(F|H_i)\pi(\d H_i) \quad \mbox{and} \quad \o{P}^\sc(F) = \o{\int} \sigma(F|H_i)\pi(\d H_i).
$$
\end{theorem}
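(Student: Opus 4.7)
The plan is to proceed in three phases: (a) show that $\P^\sc$ is closed in the already-compact $\P^\j$ and hence compact; (b) derive the easy direction that every member of $\P^\sc$ is squeezed between $\u{\int}\sigma(F|H_i)\pi(\d H_i)$ and $\o{\int}\sigma(F|H_i)\pi(\d H_i)$ by a direct partition argument; and (c) construct members of $\P^\sc$ that approximate, and by compactness realise, these bounds pointwise on any prescribed event. Non-emptiness of $\P^\sc$ will emerge as a by-product of (c).

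For (a) I would recall that $\P^\j \subseteq [0,1]^\A$ is compact in the product topology by the general coherence theory of Section~\ref{sec:preliminaries}. For each fixed $(F,B) \in \A \times \A_\L$, the quantities $\pi(B)\inf_{H_i \subseteq B}\sigma(F|H_i)$ and $\pi(B)\sup_{H_i \subseteq B}\sigma(F|H_i)$ are constants, while $\tilde{P}^\j \mapsto \tilde{P}^\j(F\wedge B)$ is a continuous coordinate projection; Definition~\ref{def:cong} therefore cuts out two closed halfspaces, and intersecting over all $(F,B)$ exhibits $\P^\sc$ as a closed subset of a compact space.

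For (b), fix $\tilde{P}^\j \in \P^\sc$, $F \in \A$, and a finite partition $\L^\F = \{A_u\}_{u=1}^m \subseteq \A_\L$ of $\Omega$. Finite additivity and the left-hand inequality of (\ref{eq:cong}) applied cellwise will give
$$\tilde{P}^\j(F) \;=\; \sum_{u=1}^m \tilde{P}^\j(F\wedge A_u) \;\ge\; \sum_{u=1}^m \pi(A_u)\inf_{H_i \subseteq A_u}\sigma(F|H_i),$$
and supremising over $\L^\F$ will yield $\tilde{P}^\j(F) \ge \u{\int}\sigma(F|H_i)\pi(\d H_i)$; the symmetric upper bound follows from the right-hand inequality of (\ref{eq:cong}). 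This delivers $\u{P}^\sc(F) \ge \u{\int}\sigma(F|H_i)\pi(\d H_i)$ and $\o{P}^\sc(F) \le \o{\int}\sigma(F|H_i)\pi(\d H_i)$, conditional on $\P^\sc \neq \emptyset$.

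For (c), given $F_0 \in \A$ and $\epsilon > 0$, I would pick a finite $\L_\epsilon = \{A_u\}_{u=1}^m \subseteq \A_\L$ witnessing the supremum defining $\u{\int}\sigma(F_0|H_i)\pi(\d H_i)$ up to $\epsilon$, and for each $A_u$ with $\pi(A_u) > 0$ choose $H_{i_u} \subseteq A_u$ whose value $\sigma(F_0|H_{i_u})$ lies within $\epsilon/m$ of the infimum on $A_u$. Assembling $\tilde{P}^\j_\epsilon(\cdot) = \sum_u \pi(A_u)\sigma(\cdot|H_{i_u})$ on $\A$, pairing with $\sigma$ and invoking Theorem~\ref{th:extension}, yields a coherent f.c.p.\ extension in $\P^\j$. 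Indexing such approximants by simultaneous refinement of finite subalgebras of $\A_\L$ and by $\epsilon \to 0$ produces a net in $\P^\j$ whose $F_0$-coordinate tends to $\u{\int}\sigma(F_0|H_i)\pi(\d H_i)$; compactness of $\P^\j$ supplies a convergent subnet whose limit, by the refinement structure and Step (a), lies in $\P^\sc$. A mirror construction using suprema handles $\o{P}^\sc$. The hard part will be Step (c): the approximants are tailored to a single event $F_0$, so global strong $\L$-conglomerability at the limit hinges on directing the net by refinement of all finite subalgebras of $\A_\L$ rather than by a single partition; closedness from Step (a) then places the limit inside $\P^\sc$ and finishes the argument.
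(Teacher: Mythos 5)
Your route is genuinely different from the paper's. Steps (a) and (b) are correct: closedness of $\P^\sc$ in $\P^\j$ follows exactly as you say, and summing the two inequalities in (\ref{eq:cong}) over the cells of a finite partition $\L^\F \subseteq \A_\L$ squeezes every $\tilde{P}^\j \in \P^\sc$ between the lower and upper Darboux sums, hence between the two S-integrals; the paper does not even spell this half out. The paper instead proves non-emptiness and computes the envelopes in one stroke by identifying elements of $\P^\sc$ with the disintegrations $\int\sigma(F|H_i)\tilde{\nu}(\d H_i)$ of $\sigma$ against extensions $\tilde{\nu}$ of $\pi$ to $\langle\L\rangle^*$ (where S-integrability is automatic, so Theorem~1.6 of \cite{brr-bayes} yields strong $\L$-conglomerability), and then invokes Schmeidler's representation of the Choquet integral as the minimum over the core of the totally monotone inner measure $\pi_*$, together with the identity between that Choquet integral and the lower S-integral. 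Your approach trades this machinery for an explicit approximation argument, which is legitimate.

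The genuine gap is in step (c). The approximant $\tilde{P}^\j_\epsilon(\cdot)=\sum_u\pi(A_u)\sigma(\cdot|H_{i_u})$ does \emph{not} belong to $\P^\j$: for $B\in\A_\L$ one has $\sigma(B|H_{i_u})={\bf 1}_{H_{i_u}\subseteq B}$, so $\tilde{P}^\j_\epsilon(B)=\sum_{u:\,H_{i_u}\subseteq B}\pi(A_u)$, which differs from $\pi(B)$ whenever $B$ is not a union of cells of $\L_\epsilon$ (take $\pi(B)=0$ with $H_{i_u}\subseteq B$ and $\pi(A_u)>0$). Hence your net does not live in $\P^\j$, and the appeal to compactness of $\P^\j$ to extract a convergent subnet with limit in $\P^\j$ does not go through as stated; likewise Theorem~\ref{th:extension} is not what makes $\{\tilde{P}^\j_\epsilon,\sigma\}$ coherent (that assessment is coherent because $\tilde{P}^\j_\epsilon$ is the unique joint consistent with $\sigma$ and the finitely supported prior $\mu_\epsilon=\sum_u\pi(A_u)\delta_{H_{i_u}}$, not with $\pi$). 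The repair is to take the subnet in the compact space $[0,1]^{\A}$, note that for each fixed $B\in\A_\L$ the $B$-coordinate eventually equals $\pi(B)$ once the partitions refine $\{B,B^c\}$, and use closure of coherence under pointwise limits of nets (Regazzini's theorem, cited in the paper) to place the limit in $\P^\j$; your refinement directing then puts the limit in $\P^\sc$ and pins its $F_0$-coordinate at $\u{\int}\sigma(F_0|H_i)\pi(\d H_i)$. Until that membership claim is corrected, the construction of minimizing elements, and with it non-emptiness, is not established.
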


As shown in the proof of Theorem~\ref{th:sc}, the lower and upper S-integrals above coincide with the Choquet integrals of $\sigma(F|\cdot)$ computed, respectively, with respect to the inner $\pi_*$ and the outer measure $\pi^*$ induced by $\pi$ on $\langle\L\rangle^*$.

Except for the trivial case when $\sigma(F|\cdot)$ is S-integrable with respect to $\pi$, for $F \in \A$, it is well-known (see, e.g.,\cite{dF-cong,Seidenfeld}) that there are joint probabilities consistent with $\{\pi,\sigma\}$ that are not $\L$-conglomerable in the sense of de Finetti, thus they are neither strongly $\L$-conglomerable: this implies $\P^\sc \subseteq \P^\j$, in general. The following example shows that under particular choices of $\pi$ and $\sigma$, and the related Boolean algebras, it can happen $\P^\sc = \P^\j$.

\begin{example}
Let $\L = \{H_i\}_{i \in \NN}$ and $\E = \{E_1,E_2\}$ with $H_i \wedge E_j \neq \emptyset$, for every $i,j$.
Take $\A_\L = \langle\L\rangle$, $\A_\E = \langle\E\rangle$ and $\A = \langle\A_\L \cup \A_\E\rangle$, together with the
finitely additive prior probability defined for $K \in \A_\L$ as
$$
\pi(K) =
\left\{
\begin{array}{ll}
0 & \mbox{if $K = \bigvee_{i \in I} H_i$ and $\card I < \aleph_0$},\\
1 & \mbox{otherwise},
\end{array}
\right.
$$
and the statistical model on $\A_\E \times \L$ singled out for $i \in \NN$ by
$$
\lambda(E_1|H_i) =
\left\{
\begin{array}{ll}
1 & \mbox{if $i$ is even},\\
0 & \mbox{otherwise},
\end{array}
\right.
\quad\mbox{and}\quad \lambda(E_2|H_i) = 1 - \lambda(E_1|H_i).
$$
The statistical model $\lambda$ extends uniquely to a strategy $\sigma$ on $\A \times \L$.

Notice that $\sigma(E_1|H_i) = {\bf 1}_{A}(H_i)$ and $\sigma(E_2|H_i) = {\bf 1}_{A^c}(H_i)$ with $A = \bigvee_{i \in \NN}H_{2i}$, thus none of them is $S$-integrable with respect to $\pi$.

It holds
$$
\u{P}^\sc(E_1) = \u{\int}\sigma(E_1|H_i)\pi(\d H_i) = \intC{\bf 1}_A(H_i)\pi_*(\d H_i) = \pi_*(A) = 0,
$$
and an analogous computation shows $\u{P}^\sc(E_2) = 0$, thus $\o{P}^\sc(E_1) = 1 - \u{P}^\sc(E_2) = 1$. In turn, Proposition~\ref{prop:bounds-sc} implies that $\u{P}^\j(E_1) = 0$ and $\o{P}^\j(E_1) = 1$, so we obtain the same bounds for $E_1$ determined by the whole set of joint probabilities consistent with $\{\pi,\sigma\}$. Actually, simple computations show that every joint probability in $\P^\j$ is strongly $\L$-conglomerable, i.e., $\P^\sc = \P^\j$, so the envelopes (trivially) coincide on the whole $\A$.
\end{example}

As a natural consequence, the notion of full $\L$-disintegrability can be weakened in the following notion of full strong $\L$-conglomerability.

\begin{definition}
A full conditional probability $\tilde{Q}(\cdot|\cdot)$ on $\A$ extending $\{\pi,\sigma\}$ is {\bf fully strongly $\L$-conglomerable} if, denoting with $\tilde{\pi}^\p = \tilde{Q}_{|\A_\L \times \A_\L^0}$, for every $F|K \in \A \times \A_\L^0$ and every $B \in \A_\L$ such that $B \subseteq K$ it holds
\begin{equation}
\tilde{\pi}^\p(B|K)\inf_{H_i \subseteq B} \sigma(F|H_i) \le \tilde{Q}(F \wedge B|K) \le \tilde{\pi}^\p(B|K)\sup_{H_i \subseteq B} \sigma(F|H_i).
\label{eq:full-cong}
\end{equation}
\end{definition}

Thus we can restrict to the set
$$
\Q^\fsc = \{\tilde{Q} \,:\, \mbox{fully strongly $\L$-conglomerable f.c.p. on $\A$ extending $\{\pi,\sigma\}$}\},
$$
whose topological structure is considered in next theorem. Let us stress that in case $\sigma(F|\cdot)$ is $\A_\L$-continuous, for every $F \in \A$, then $\Q^{\fsc} = \Q^{\fd}$.

\begin{theorem}
\label{th:set-fsc}
The set $\Q^\fsc$ is a non-empty compact subset of the space $[0,1]^{\A \times \A^0}$ endowed with the product topology of pointwise convergence.
\end{theorem}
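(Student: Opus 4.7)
The plan is to deduce compactness from closedness inside the already-compact set $\P$ of all f.c.p. extensions of $\{\pi,\sigma\}$ (Section~\ref{sec:coherent-ext}), and to establish non-emptiness by combining an arbitrary f.c.p. extension of the prior on $\A_\L$ with the existence of strongly $\L$-conglomerable joints granted by Theorem~\ref{th:sc}, applied one conditioning event of $\A_\L^0$ at a time.

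For compactness, I would first note that $\Q^\fsc \subseteq \P$ and that $\P$ is compact in the product topology on $[0,1]^{\A \times \A^0}$ (as recalled after Theorem~\ref{th:extension}), so it suffices to prove closedness of $\Q^\fsc$ in $\P$. The f.c.p. axioms (C1)--(C3) and the constraints $\tilde{Q}(\cdot|\Omega)_{|\A_\L} = \pi$ and $\tilde{Q}(\cdot|H_i) = \sigma(\cdot|H_i)$ are already enforced by membership in $\P$, so the only extra defining condition is (\ref{eq:full-cong}). Because $\inf_{H_i \subseteq B}\sigma(F|H_i)$ and $\sup_{H_i \subseteq B}\sigma(F|H_i)$ are fixed numbers once $\sigma$ is given, (\ref{eq:full-cong}) is a weak inequality between continuous bilinear functions of only the two coordinates $\tilde{Q}(F \wedge B|K)$ and $\tilde{\pi}^\p(B|K)$, hence a closed subset of $[0,1]^{\A \times \A^0}$. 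Intersecting these closed sets over all admissible triples $(F,K,B)$ with $K \in \A_\L^0$, $B \in \A_\L$, $B \subseteq K$ and $F \in \A$ yields closedness of $\Q^\fsc$, whence compactness.

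For non-emptiness, I would first fix an f.c.p. $\tilde{\pi}^\p$ on $\A_\L \times \A_\L^0$ extending $\pi$ (Theorem~\ref{th:extension}). For every $K \in \A_\L^0$, the assessment $\{\tilde{\pi}^\p(\cdot|K), \sigma\}$ is coherent, and Theorem~\ref{th:sc}, applied with $\tilde{\pi}^\p(\cdot|K)$ in place of $\pi$, produces a strongly $\L$-conglomerable joint probability $Q_K$ on $\A$ consistent with it, satisfying in particular $\tilde{\pi}^\p(B|K)\inf_{H_i \subseteq B}\sigma(F|H_i) \le Q_K(F \wedge B) \le \tilde{\pi}^\p(B|K)\sup_{H_i \subseteq B}\sigma(F|H_i)$ for every $F \in \A$ and $B \in \A_\L$ with $B \subseteq K$. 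Setting $\tilde{Q}(F|K) := Q_K(F)$ for $K \in \A_\L^0$, and completing $\tilde{Q}$ on conditional events with $K \in \A^0 \setminus \A_\L^0$ via the coherent extension Theorem~\ref{th:extension} (no extra constraint arises there, since (\ref{eq:full-cong}) only concerns $K \in \A_\L^0$), gives the desired candidate.

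The main obstacle is choosing the family $\{Q_K\}_{K \in \A_\L^0}$ consistently: for $K \subseteq K'$ in $\A_\L^0$, axiom (C3) forces $Q_{K'}(F \wedge K) = Q_{K'}(K)\, Q_K(F)$, and an independent selection of the $Q_K$'s will generally violate this. To handle it I would run a Zorn-type induction on partial extensions $(\D, \tilde{Q})$ of $\{\pi,\sigma\}$ satisfying (C1)--(C3) on $\D$ and (\ref{eq:full-cong}) on $\D \cap (\A \times \A_\L^0)$, ordered by inclusion. A maximal element exists, and the key verification is that at each enlargement one can still invoke Theorem~\ref{th:sc}, because conditioning a strongly $\L$-conglomerable joint $Q_{K'}$ on a positive $Q_{K'}$-probability event $K \in \A_\L$ with $K \subseteq K'$ yields a joint probability that is itself strongly $\L$-conglomerable with respect to $\tilde{\pi}^\p(\cdot|K)$. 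This reconciles the newly chosen $Q_K$ with the already-fixed values along chains and forces the maximal element to have domain $\A \times \A^0$, completing the argument.
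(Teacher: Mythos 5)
Your compactness argument is sound and is essentially the paper's: $\Q^\fsc$ is cut out of the compact set $\P$ by the weak inequalities (\ref{eq:full-cong}), each involving only the two coordinates $\tilde{Q}(F\wedge B|K)$ and $\tilde{Q}(B|K)$ and fixed constants, hence closed (the paper phrases this as preservation of non-strict inequalities under pointwise limits of nets, plus closure of coherent conditional probabilities under such limits).

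The non-emptiness half, however, has a genuine gap. You correctly identify the obstacle — reconciling the per-event choices $Q_K$ with axiom {\bf (C3)} — but the Zorn-type repair does not close it. Your ``key verification'' only handles the \emph{downward} propagation from an already-fixed $Q_{K'}$ with $K\subseteq K'$ and $Q_{K'}(K)>0$ to the forced $Q_K$. It does not address the converse direction: when no superset forces $Q_K$ and you invoke Theorem~\ref{th:sc} to pick $Q_K$ freely, that $Q_K$ may assign positive probability to some $K''\subseteq K$ whose conditional $Q_{K''}$ was \emph{already} fixed at an earlier stage; {\bf (C3)} then demands $Q_K(F\wedge K'')=Q_K(K'')\,Q_{K''}(F)$, and Theorem~\ref{th:sc} gives you \emph{some} strongly $\L$-conglomerable joint consistent with $\{\tilde{\pi}^\p(\cdot|K),\sigma\}$, not one with prescribed conditionals on subevents. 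A similar problem arises when several incomparable supersets in a non-structured partial domain each force a value of $Q_K$: their agreement is not automatic unless the domain already has the form ``algebra $\times$ additive class.'' So the maximality step of the Zorn argument is exactly the hard part and is left unproved.

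The paper avoids the whole issue by constructing all the conditionals \emph{simultaneously} from a single coherent object: extend $\tilde{\pi}^\p$ to a conditional probability $\tilde{\nu}^\p$ on $\langle\L\rangle^*\times\A_\L^0$ (possible by Remark~\ref{rem:no-strategy} and Lemma~\ref{lem:cond-inner}), take a strategy $\rho$ on $\langle\A\cup\langle\L\rangle^*\rangle\times\L$ extending $\sigma$, and set $\tilde{Q}(F|K)=\int\rho(F|H_i)\,\tilde{\nu}^\p(\d H_i|K)$ for all $K\in\A_\L^0$ at once. Strong conglomerability of each $\tilde{Q}(\cdot|K)$ then follows as in the proof of Theorem~\ref{th:sc}, while {\bf (C3)} across different conditioning events is \emph{inherited} from {\bf (C3)} for $\tilde{\nu}^\p$ via the change-of-measure computation in the proof of Proposition~\ref{prop:fully-L-dis}; the final extension to $\A\times\A^0$ is then unconstrained, as you note. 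If you want to salvage your route, you should replace the Zorn induction by this integral representation (or otherwise prove an extension theorem for coherent conditional probabilities that preserves the conglomerability constraints, which is not available off the shelf).
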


Concerning the lower envelope $\u{Q}^\fsc = \min \Q^\fsc$, it can be expressed as the minimum of full conditional probabilities extending $\{\pi,\sigma\}$ that are fully $\L$-disintegrable with respect to an extension of $\pi$ on $\langle\L\rangle^*$. At this aim, consider the core $\P_{\pi_*}$ induced by the inner measure $\pi_*$, which coincides with the set of all finitely additive probabilities extending $\pi$ on $\langle\L\rangle^*$.

Let $\B = \langle\A \cup \langle\L\rangle^*\rangle$ and $\rho$ be any strategy on $\B \times \L$ extending $\sigma$. For every $\tilde{\nu} \in \P_{\pi_*}$ we can consider the set $\Q^\fd_{\tilde{\nu}}$ of fully $\L$-disintegrable full conditional probabilities on $\B$ extending $\{\tilde{\nu},\rho\}$, whose lower envelope is denoted as $\u{Q}^\fd_{\tilde{\nu}} = \min \Q^\fd_{\tilde{\nu}}$. The following characterization of $\u{Q}^\fsc$ is then obtained.

\begin{theorem}
\label{th:lower-fsc}
For every $F|K \in \A \times \A^0$ it holds
$$
\u{Q}^\fsc(F|K) = \min\left\{\u{Q}^\fd_{\tilde{\nu}}(F|K) 	\,:\, \tilde{\nu} \in \P_{\pi^*}\right\}.
$$
\end{theorem}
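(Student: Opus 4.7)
I would prove the identity by the two opposite inequalities, leaning on the compactness of $\Q^\fsc$ and $\Q^\fd_{\tilde{\nu}}$ (Theorems~\ref{th:set-fsc} and~\ref{th:compact}) and on the Choquet/Schmeidler representation of the Stieltjes integrals that drives Theorem~\ref{th:sc}.  I treat $\P_{\pi_*}$, the core of the inner measure $\pi_*$, as the natural interpretation of the set appearing in the statement.

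\textbf{Easy direction $\u{Q}^\fsc(F|K)\le\u{Q}^\fd_{\tilde{\nu}}(F|K)$ for every $\tilde{\nu}\in\P_{\pi_*}$.} Fix $\tilde{\nu}\in\P_{\pi_*}$ and $\tilde{Q}'\in\Q^\fd_{\tilde{\nu}}$, and set $\tilde{\nu}^\p=\tilde{Q}'|_{\langle\L\rangle^*\times(\langle\L\rangle^*)^0}$. The restriction $\tilde{Q}=\tilde{Q}'|_{\A\times\A^0}$ is a f.c.p.\ on $\A$ extending $\{\pi,\sigma\}$.  To check $\tilde{Q}\in\Q^\fsc$, fix $F\in\A$, $K\in\A_\L^0$ and $B\in\A_\L$ with $B\subseteq K$: atomicity of $\L$ and axiom \textbf{(S1)} give $\rho(F\wedge B|H_i)=\sigma(F|H_i){\bf 1}_B(H_i)$, so full $\L$-disintegrability of $\tilde{Q}'$ yields
\[
\tilde{Q}(F\wedge B|K)=\int\sigma(F|H_i){\bf 1}_B(H_i)\,\tilde{\nu}^\p(\d H_i|K)\in\bigl[\tilde{\pi}^\p(B|K)\inf_{H_i\subseteq B}\sigma(F|H_i),\,\tilde{\pi}^\p(B|K)\sup_{H_i\subseteq B}\sigma(F|H_i)\bigr],
\]
where $\tilde{\nu}^\p(B|K)=\tilde{\pi}^\p(B|K)$ because $B\in\A_\L$. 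Hence $\tilde{Q}\in\Q^\fsc$, giving $\tilde{Q}'(F|K)\ge\u{Q}^\fsc(F|K)$, and the claim follows by taking infima first over $\tilde{Q}'$ and then over $\tilde{\nu}$.

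\textbf{Reverse inequality and main obstacle.} By Theorem~\ref{th:set-fsc}, pick $\tilde{Q}^*\in\Q^\fsc$ attaining $\u{Q}^\fsc(F|K)$.  It suffices to construct $\tilde{\nu}^*\in\P_{\pi_*}$ and $\tilde{Q}'\in\Q^\fd_{\tilde{\nu}^*}$ whose restriction to $\A\times\A^0$ is $\tilde{Q}^*$, since this forces $\u{Q}^\fd_{\tilde{\nu}^*}(F|K)\le\tilde{Q}'(F|K)=\u{Q}^\fsc(F|K)$.  Summing the full strong $\L$-conglomerability inequalities~(\ref{eq:full-cong}) over finite $\A_\L$-partitions of each $K'\in\A_\L^0$ yields
\[
\u{\int}\sigma(F'|H_i)\tilde{\pi}^\p(\d H_i|K')\;\le\;\tilde{Q}^*(F'|K')\;\le\;\o{\int}\sigma(F'|H_i)\tilde{\pi}^\p(\d H_i|K'),\qquad F'\in\A,
\]
and the Choquet/Schmeidler representation used in the proof of Theorem~\ref{th:sc} identifies these Stieltjes bounds with the minimum and maximum of $\int\sigma(F'|H_i)\tilde{\nu}^\p(\d H_i|K')$ as $\tilde{\nu}^\p(\cdot|K')$ ranges over the core of $\tilde{\pi}^\p_*(\cdot|K')$ on $\langle\L\rangle^*$.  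Since on $\B=\langle\A\cup\langle\L\rangle^*\rangle$ every $\rho(F'|\cdot)$ is $\langle\L\rangle^*$-continuous, the remark preceding Theorem~\ref{th:set-fsc} gives $\Q^\fsc_\B=\Q^\fd_\B$, so it is enough to extend $\tilde{Q}^*$ to a fully strongly $\L$-conglomerable f.c.p.\ $\tilde{Q}'$ on $\B$ and set $\tilde{\nu}^*=\tilde{Q}'|_{\langle\L\rangle^*}$.  The main obstacle is producing this extension in a coherent way across all $K'$: for each $K'\in\A_\L^0$ one must pick an extension $\tilde{\nu}^\p(\cdot|K')$ of $\tilde{\pi}^\p(\cdot|K')$ realizing $\tilde{Q}^*(F'|K')=\int\rho(F'|H_i)\tilde{\nu}^\p(\d H_i|K')$ for every $F'\in\A$, and the whole family must glue into a coherent f.c.p.\ on $\langle\L\rangle^*\times(\langle\L\rangle^*)^0$.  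I would handle this by a Tychonoff compactness argument on the product $\prod_{K'}\P_{\tilde{\pi}^\p_*(\cdot|K')}$ intersected with the closed constraint sets above; the Stieltjes sandwich ensures solvability for any single $F'$, while simultaneous attainment for a finite family of $F'$ (the delicate point I expect to require the most care) is recovered by an intermediate-value/separation argument exploiting convexity of the cores.  Once a compatible selection is obtained, Theorem~\ref{th:extension} extends $\tilde{Q}'$ to all of $\B\times\B^0$ and Theorem~\ref{th:compact} places it in $\Q^\fd_{\tilde{\nu}^*}$, concluding the proof.
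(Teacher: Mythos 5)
Your first inequality is sound and is essentially what the paper's argument needs as well: the restriction to $\A \times \A^0$ of any element of $\Q^\fd_{\tilde{\nu}}$ satisfies (\ref{eq:full-cong}) by the sandwich bounds for the Stieltjes integral of $\sigma(F|\cdot){\bf 1}_B$. The genuine gap is in the reverse inequality. You propose to take an \emph{arbitrary} minimizer $\tilde{Q}^*\in\Q^\fsc$ and lift it to a fully $\L$-disintegrable f.c.p.\ on $\B$, and the step you flag as delicate --- realizing $\tilde{Q}^*(F'|K')=\int\sigma(F'|H_i)\tilde{\nu}^\p(\d H_i|K')$ \emph{simultaneously} for a finite family of $F'$ by a separation/intermediate-value argument --- is not merely delicate but false. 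Conglomerability pins each single value $\tilde{Q}^*(F|K')$ inside the correct interval, but it does not force the vector $\left(\tilde{Q}^*(F|K')\right)_{F\in\A}$ into the joint range of $\tilde{\nu}\mapsto\left(\int\sigma(F|H_i)\tilde{\nu}(\d H_i|K')\right)_{F\in\A}$: that range is a proper convex subset of the product of intervals, whose supporting half-spaces generally have normals that are not of the $0$--$1$ (subset-sum) type supplied by (\ref{eq:cong})--(\ref{eq:full-cong}). Concretely, take $\L=\{H_i\}_{i\in\NN}$, $\E=\{E_1,\ldots,E_4\}$ with $H_i\wedge E_j\neq\emptyset$, $\A_\L=\langle\L\rangle$, $\pi$ equal to $0$ on finite and $1$ on cofinite disjunctions, and $\lambda(E_\cdot|H_i)=(0.4,0.3,0.2,0.1)$ for $i$ even, $(0.1,0.2,0.3,0.4)$ for $i$ odd. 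Every vector $\left(\int\sigma(E_j|H_i)\tilde{\nu}(\d H_i)\right)_j$ with $\tilde{\nu}\in\P_{\pi_*}$ lies on the segment joining these two points, whereas the joint probability defined by $P(E_\cdot)=(0.25,0.3,0.2,0.25)$, $P(C\wedge E_j)=0$ for finite $C\in\A_\L$ and $P(C\wedge E_j)=P(E_j)$ for cofinite $C$, is consistent with $\{\pi,\sigma\}$ and satisfies (\ref{eq:cong}) (every subset sum of $P(E_\cdot)$ lies between the corresponding subset sums of the two vectors) yet is off the segment. Hence a strongly $\L$-conglomerable extension need not disintegrate with respect to any $\tilde{\nu}$, a minimizer of $\Q^\fsc$ need not lift, and your Tychonoff construction has nothing to intersect.

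What saves the theorem --- and what the paper's terse reduction to the proof of Theorem~\ref{th:set-fsc} actually exploits --- is that the statement is about \emph{envelopes}, not about the two sets coinciding. Summing (\ref{eq:full-cong}) over finite $\A_\L$-partitions of $K'$ bounds every $\tilde{Q}\in\Q^\fsc$ from below by the lower Stieltjes integral $\u{\int}\sigma(\cdot|H_i)\tilde{\pi}^\p(\d H_i|K')$, and by Lemma~\ref{lem:cond-inner} together with Schmeidler's representation this bound equals $\min_{\tilde{\nu}}\int\sigma(\cdot|H_i)\tilde{\nu}(\d H_i|K')$ over the relevant core, i.e.\ it is already attained inside the smaller family $\bigcup_{\tilde{\nu}}\Q^\fd_{\tilde{\nu}}$ (as in the proof of Theorem~\ref{th:sc}); the conditioning events $K\in\A^0\setminus\A_\L^0$ are then handled through the $L^\fd$, $U^\fd$ machinery of Theorem~\ref{th:fully-L-dis-fcp}. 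You should reorganize the second half of your argument along these lines rather than attempting to disintegrate a given element of $\Q^\fsc$.
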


\appendix
\section{Proofs for Section~3}
\label{appendix0}
\begin{proof}[Proof of Proposition~\ref{prop:uniqueness-strategy}]
Every $F \in \A$ is such that $F=\bigvee_{s=1}^m\bigwedge_{t=1}^{n_s} A_{s_t}$,
with $A_{s_t} \in \A_\L \cup \A_\E$. For $H_i \in \L$ it holds
$$
F \wedge H_i = \left(\bigvee_{s=1}^m\bigwedge_{t=1}^{n_s} A_{s_t}\right) \wedge H_i = \bigvee_{s=1}^m\left(\left(\bigwedge_{t=1}^{n_s} A_{s_t}\right) \wedge H_i\right).
$$

Define the index set
$
S = \left\{s \in \{1,\ldots,m\} \,:\, \left(\bigwedge_{t=1}^{n_s} A_{s_t}\right) \wedge H_i \neq \emptyset\right\},
$
and for each $s \in S$ define the index set
$
T_s = \left\{t \in \{1,\ldots,n_s\} \,:\, A_{s_t} \in \A_\E\right\}.
$
This implies that the event $F_{H_i} = \bigvee_{s \in S}\bigwedge_{t \in T_s} A_{s_t}$ belongs to $\A_\E$ and is such that
$F \wedge H_i = F_{H_i} \wedge H_i$, where $F_{H_i} = \emptyset$ if $S = \emptyset$ and $\bigwedge_{t \in T_s} A_{s_t} = \emptyset$ if $T_s = \emptyset$.

Let $\sigma$ be a strategy extending on $\A \times \L$ the statistical model $\lambda$ defined on $\A_\E \times \L$. For $F|H_i \in \A \times \L$ it must be
$
\sigma(F|H_i) = \sigma(F \wedge H_i|H_i) = \sigma(F_{H_i} \wedge H_i|H_i) = \sigma(F_{H_i}|H_i) = \lambda(F_{H_i}|H_i),
$
i.e., $\sigma$ is uniquely determined by $\lambda$. 
\end{proof}

\begin{proof}[Proof of Theorem~\ref{th:joint}]
The proof is trivial if $\L$ is finite. Thus suppose $\card \L \ge \aleph_0$, let $\G = (\A_\L \times \{\Omega\}) \cup (\A \times \L)$.
By Theorem~\ref{th:extension}, for every $F \in \A$, the interval of coherent extensions $\II_{F} = [\u{P}(F), \o{P}(F)]$ can be computed in terms of finite subfamilies of $\G$.

Since for every $\F_1 \subseteq \F_2 \subseteq \G$ with $\card \F_2 < \aleph_0$ one has $\u{P}^{\F_1}(F) \le \u{P}^{\F_2}(F)$, we can restrict to finite subfamilies of $\G$ containing a set of the form
$(\L^\F \times \{\Omega\}) \cup (\{F\} \times \{H_{i_h}\}_{h=1}^n )$,
where $\L^\F = \{H_{i_h}\}_{h=1}^n \cup \{B_k\}_{k=1}^t$ is a finite partition of $\Omega$ contained in $\A_\L$.
Indeed, every finite subfamily can be suitably enlarged in order to contain a set of this form.

For such a set $\F$ we have $\u{P}^\F(F) = \sum_{h=1}^n \sigma(F|H_{i_h})\pi(H_{i_h}) + \sum_{B_k \subseteq F} \pi(B_k)$ and so the thesis follows.
\end{proof}

\begin{proof}[Proof of Theorem~\ref{th:ext-prob-lik-inf}]
The statement is trivial if $F \wedge K = K$ since in this case $\tilde{P}(F|K) = 1$ for every $\tilde{P} \in \P$, for this suppose $F \wedge K \neq K$.

To prove condition {\it (i)}, suppose $\u{P}^\j(K) > 0$, which implies $\tilde{P}^\j(K) > 0$ for every $\tilde{P}^\j \in \P^\j$, and so $\u{P}(F|K) = \min\left\{\frac{\tilde{P}^\j(F \wedge K)}{\tilde{P}^\j(F \wedge K) + \tilde{P}^\j(F^c \wedge K)} \,:\, \tilde{P} \in \P\right\}$. The conclusion follows since the real function $\frac{x}{x+y}$ is increasing in $x$ and decreasing in $y$, so the minimum is attained in correspondence of $\frac{\u{P}^\j(F \wedge K)}{\u{P}^\j(F \wedge K) + U^\j(F^c,K)}$ or
$\frac{L^\j(F,K)}{L^\j(F,K) + \o{P}^\j(F^c \wedge K)}$.

To prove condition {\it (ii)}, let $\G = (\A_\L \times \{\Omega\}) \cup (\A \times \L)$ and assume $\u{P}^\j(K) = 0$.
By Theorem~\ref{th:extension}, for every $F|K \in \A \times \A^0$, the interval of coherent extensions $\II_{F|K} = [\u{P}(F|K), \o{P}(F|K)]$ can be computed in terms of finite subfamilies of $\G$.

Since for every $\F_1 \subseteq \F_2 \subseteq \G$ and $\card \F_2 < \aleph_0$ one has $\u{P}^{\F_1}(F|K) \le \u{P}^{\F_2}(F|K)$, for arbitrary $I_k' \subseteq I_k^{F|K}$ with $\card I_k' < \aleph_0$, $k=1,2,3$, and $I' = I_1' \cup I_2' \cup I_3'$,
we can restrict to finite subfamilies containing
$(\L^\F \times \{\Omega\}) \cup (\C_{\{F,K\}} \times \{H_i\}_{i \in I'})$,
where $\L^\F =\{H_i\}_{i \in I'} \cup \{B_k\}_{k=1}^t$ is a finite partition of $\Omega$ contained in $\A_\L$, and
$\C_{\{F,K\}} = \{A_h\}_{h=1}^m$, with $m \le 4$, is the set of atoms of the algebra generated by $\{F,K\}$.
Indeed, every finite subfamily can be suitably enlarged in order to contain a set of this form.

For such a finite subfamily $\F$, let $\C_\F = \{C_1,\ldots,C_q\}$ be the set of atoms of the algebra generated by $\L^\F \cup \C_{\{F,K\}}$.

Let $\C_1 = \{C_r \in \C_\F \,:\, \u{P}(C_r) = 0\}$.
As described in \cite{cs-libro} (see also \cite{cv-amai}) the lower bound
$\u{P}^\F(F|K)$ can be explicitly computed by solving the optimization problem with non-negative unknowns
$x_{r}^{1}$ for $C_r \in \C_{1}$,
$$
\minimize\left[\sum\limits_{C_r \subseteq {F \wedge K}} x_{r}^{1}\right]
$$
$$
\left\{
\begin{array}{ll}
x_{r}^{1} = \sigma(A_h|H_i) \cdot \left(\sum\limits_{C_s \subseteq H_i} x_{s}^1\right) & \mbox{if $\sigma(K|H_i) > 0$ and $\pi(H_i) = 0$}\\
& \mbox{and $i \in I'$ and $C_r = A_h \wedge H_i \in \C_1$},\\[2ex]
\sum\limits_{C_r \subseteq K} x_{r}^{1} = 1.
\end{array}
\right.
$$
Denote with ${\bf x}^1$, whose $r$-th component is ${\bf x}^1_r$, a solution of previous system.

If $I_2^{F|K} = \emptyset$, in order to be $F \wedge K \neq K$, it must be $I_3^{F|K} \neq \emptyset$ and so we can restrict to finite subfamilies having $I_3' \neq \emptyset$. In this case, previous system has always a solution such that $\sum\limits_{C_r \subseteq {F \wedge K}} {\bf x}_r^1 = 0$ and
$\sum\limits_{C_r \subseteq {F^c \wedge K}} {\bf x}_r^1 = 1$, which implies $\u{P}^\F(F|K)= 0$. Since every finite subfamily can be suitably enlarged to a finite subfamily having $I_3' \neq \emptyset$, then $\u{P}(F|K) = 0$.

If $I_2^{F|K} \neq \emptyset$ and $\card I_2^{F|K} \ge \aleph_0$ we can restrict to finite subfamilies having $I_2' \neq \emptyset$, for which previous system has always a solution such that $\sum\limits_{C_r \subseteq {F \wedge K}} {\bf x}_r^1 = 0$ and $\sum\limits_{C_r \subseteq {F^c \wedge K}} {\bf x}_r^1 = 1$, which implies $\u{P}^\F(F|K)= 0$. Since every finite subfamily can be suitably enlarged to a finite subfamily having $I_2' \neq \emptyset$, then $\u{P}(F|K) = 0$.

Finally, if $I_2^{F|K} \neq \emptyset$ and $\card I_2^{F|K} < \aleph_0$ we can restrict to finite subfamilies having $I_2' = I_2^{F|K}$ for which the minimum of previous optimization problem is achieved in correspondence of those solutions such that $\sum\limits_{C_r \subseteq K \wedge H_i} {\bf x}_{r}^1 = 1$ for $i \in I_2^{F|K}$, that implies $\sum\limits_{C_r \subseteq F \wedge K \wedge H_i} {\bf x}_{r}^1 = \frac{\sigma(F \wedge K|H_i)}{\sigma(K|H_i)}$ for $i \in I_2^{F|K}$, and then
$\u{P}^\F(F|K) = \min\limits_{i \in I_2^{F|K}} \frac{\sigma(F \wedge K|H_i)}{\sigma(K|H_i)}$. Since every finite subfamily can be suitably enlarged to a finite subfamily having $I_2' = I_2^{F|K}$, then $\u{P}(F|K) = \min\limits_{i \in I_2^{F|K}} \frac{\sigma(F \wedge K|H_i)}{\sigma(K|H_i)}$.
\end{proof}

\section{Proofs for Section~4}
\label{appendix1}
Throughout this section $\sigma(F|\cdot)$, viewed as a function of the second variable, is assumed to be an $\A_\L$-continuous function defined on $\L$, for every $F \in \A$.

\begin{proof}[Proof of Theorem~\ref{th:prior-lik-inf-dis}]
The proof is trivial if $F \wedge K = K$ or $P^\dd(K) > 0$, thus suppose $F \wedge K \neq K$ and $P^\dd(K) = 0$.

The proof of condition {\it (ii)} follows the same line of that of condition {\it (ii)} of Theorem~\ref{th:ext-prob-lik-inf}. Let $\G = \A \times (\{\Omega\} \cup \L)$ and take arbitrary $I_k' \subseteq I_k^{F|K}$ with $\card I_k' < \aleph_0$, $k=1,2,3$, and $I' = I_1' \cup I_2' \cup I_3'$. Consider a finite subfamily $\F \subseteq \G$ containing $(\L^\F \times \{\Omega\}) \cup (\C_{\{F,K\}} \times \{H_i\}_{i \in I'})$,
where $\L^\F =\{H_i\}_{i \in I'} \cup \{B_k\}_{k=1}^t$ is a finite partition of $\Omega$ contained in $\A_\L$, and
$\C_{\{F,K\}} = \{A_h\}_{h=1}^m$, with $m \le 4$, is the set of atoms of the algebra generated by $\{F,K\}$.
Let $\C_\F = \{C_1,\ldots,C_q\}$ be the set of atoms of the algebra generated by $\L^\F \cup \C_{\{F,K\}}$ and $\C_1 = \{C_r \in \C_\F \,:\, P^\d(C_r) = 0\}$. Then, the conclusion follows solving an optimization problem analogous to the one in the proof of condition {\it (ii)} of Theorem~\ref{th:ext-prob-lik-inf}.
\end{proof}

Note that full $\L$-disintegrability is essentially determined by the set of full conditional prior probabilities $\P^\p =  \{\tilde{\pi}^\p = \tilde{P}_{|\A_\L \times \A_\L^0} \,:\, \tilde{P} \in \P\}$, with $\P$ the set of f.c.p. on $\A$ extending $\{\pi,\sigma\}$, whose lower envelope $\u{\pi}^\p = \min \P^\p$ is characterized in the following corollary that is an immediate consequence of Theorem~\ref{th:prior-lik-inf-dis}.

\begin{remark}
\label{rem:no-strategy}
The same class $\P^\p$ is obtained extending coherently the sole $\pi$ to $\A_\L \times \A_\L^0$, i.e., the strategy $\sigma$ does not affect it. Indeed, for every $H_i \in \L$, every $\tilde{\pi}^\p \in \P^\p$ is only asked to satisfy $\tilde{\pi}^\p_{|\A_\L \times \{H_i\}} = \sigma_{|\A_\L \times \{H_i\}}$ which constitutes a vacuous constraint.
\end{remark}

\begin{corollary}
\label{cor:cond-prior}
The lower envelope $\u{\pi}^\p$ of the set $\P^\p$ of coherent extensions of $\{\pi,\sigma\}$ to $\A_\L \times \A_\L^0$ satisfies the following properties:
\begin{itemize}
\item[\it (i)] $\u{\pi}^\p(\cdot|K)$ is totally monotone on $\A_\L$, for every $K \in \A_\L^0$;
\item[\it (ii)] for every $F|K \in \A_\L \times \A_\L^0$ it holds $\u{\pi}^\p(F|K) = 1$ when $F \wedge K = K$ and if
$F \wedge K \neq K$
$$
\u{\pi}^\p(F|K) =
\left\{
\begin{array}{ll}
\frac{\pi(F \wedge K)}{\pi(K)} & \mbox{if $\pi(K) > 0$},\\[2ex]
0 & \mbox{otherwise}.
\end{array}
\right.
$$
\end{itemize}
\end{corollary}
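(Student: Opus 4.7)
The plan is to reduce the statement to a situation that involves only $\pi$ (no strategy) and then read off the formula and total monotonicity. The key observation, already recorded in Remark~\ref{rem:no-strategy}, is that restricting to $\A_\L \times \A_\L^0$ the compatibility constraint with $\sigma$ becomes vacuous, so $\P^\p$ coincides with the set of coherent extensions of the sole $\pi$ from $\A_\L \times \{\Omega\}$ to $\A_\L \times \A_\L^0$. Thus only $\pi$ will enter the computation.

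For part \emph{(ii)}, I would split into the three cases in the statement. The case $F \wedge K = K$ is immediate from axiom (C1). For $F \wedge K \neq K$ with $\pi(K) > 0$, axiom (C3) applied to $\tilde{\pi}^\p(F \wedge K|\Omega) = \tilde{\pi}^\p(K|\Omega)\cdot \tilde{\pi}^\p(F|K)$ forces the unique value $\tilde{\pi}^\p(F|K) = \pi(F \wedge K)/\pi(K)$, so $\u{\pi}^\p(F|K)$ equals this common value. For $F \wedge K \neq K$ with $\pi(K) = 0$, I would specialize Theorem~\ref{th:prior-lik-inf-dis}\emph{(ii)} to $F|K \in \A_\L \times \A_\L^0$: since $F \in \A_\L \subseteq \langle\L\rangle^*$, every $H_i \in \L$ is contained in $F$ or in $F^c$, so $H_i \wedge F \wedge K$ and $H_i \wedge F^c \wedge K$ cannot both be non-empty. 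Hence $I_2^{F|K} = \emptyset$, the ``otherwise'' branch of Theorem~\ref{th:prior-lik-inf-dis}\emph{(ii)} applies, and $\u{\pi}^\p(F|K) = 0$. That this lower bound is actually attained can also be seen directly: pick an ultrafilter of $\A_\L$ containing $F^c \wedge K$ (nonempty since $F \wedge K \neq K$); the associated $0$-$1$ valued probability $q$ satisfies $q(K) = 1$ and $q(F \wedge K) = 0$; the assessment $\{\pi(\cdot|\Omega), q(\cdot|K)\}$ is coherent because $\pi(K)=0$ imposes no linear constraint incompatible with $q(K)=1$, and by Theorem~\ref{th:extension} it extends to an element of $\P^\p$ giving value $0$ to $F|K$.

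For part \emph{(i)}, fix $K \in \A_\L^0$ and distinguish the same two subcases. If $\pi(K) > 0$, the formula in \emph{(ii)} shows that $\u{\pi}^\p(\cdot|K)$ is the finitely additive probability $\pi(\cdot \wedge K)/\pi(K)$ on $\A_\L$, which is additive and therefore $n$-monotone for every $n \ge 2$. If $\pi(K) = 0$, \emph{(ii)} yields $\u{\pi}^\p(F|K) = 1$ when $K \subseteq F$ and $0$ otherwise, i.e.\ the unanimity capacity $u_K$. Its total monotonicity is a direct inclusion--exclusion check: for $A_1, \ldots, A_n \in \A_\L$, the right-hand side of \eqref{eq:n-mon} equals $\sum_{\emptyset \neq I \subseteq S} (-1)^{|I|-1}$ where $S = \{i : K \subseteq A_i\}$; this sum is $1$ when $S \neq \emptyset$ and $0$ otherwise, and in either case is bounded above by $u_K\!\left(\bigvee_i A_i\right)$.

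The only nontrivial point is the identification $I_2^{F|K} = \emptyset$ above, which is what allows the direct transfer of Theorem~\ref{th:prior-lik-inf-dis}\emph{(ii)}; this is straightforward, but must be pointed out to justify calling the corollary an ``immediate consequence''. Everything else is either one of the coherence axioms (C1)--(C3) or a standard check on unanimity capacities.
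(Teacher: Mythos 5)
Your proof is correct and follows essentially the paper's own route: the paper gives no explicit proof but declares the corollary an immediate consequence of Theorem~\ref{th:prior-lik-inf-dis}, and your argument simply supplies the details of that specialization (the key observation that $I_2^{F|K}=\emptyset$ because every $H_i$ lies in $F$ or $F^c$ for $F\in\A_\L$, plus the standard checks that a finitely additive probability and the vacuous capacity at $K$ are totally monotone). The extra ultrafilter attainment argument is unnecessary but harmless.
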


Previous result implies that, for $K \in \A_\L^0$, $\u{\pi}^\p(\cdot|K)$ is a finitely additive probability if $\pi(K) > 0$, and otherwise it is a totally monotone capacity {\it vacuous at} $K$ (i.e., for every $F \in \A_\L$ it holds $\u{\pi}^\p(F|K) = 1$ if $K \subseteq F$ and $0$ otherwise).

Given $\tilde{\pi}^\p$ in $\P^\p$, for every $F|K \in \A \times \A_\L^0$ define the function
\begin{equation}
\tilde{P}^\fd(F|K) = \int \sigma(F|H_i)\tilde{\pi}^\p(\d H_i|K).
\label{eq:full-dis}
\end{equation}
Next proposition shows that the function $\tilde{P}^\fd$ is a conditional probability on $\A \times \A_\L^0$ extending $\{P^\dd,\sigma,\tilde{\pi}^\p\}$ which will be referred to as {\it fully $\L$-disintegrable extension} in the following.
It is easily seen that any full conditional probability on $\A$ further extending $\tilde{P}^\fd$ will, in turn, be fully $\L$-disintegrable.

\begin{proposition}
\label{prop:fully-L-dis}
The function $\tilde{P}^\fd$ defined as in equation (\ref{eq:full-dis}) is a conditional probability on $\A \times \A_\L^0$ extending
$\{P^\dd,\sigma,\tilde{\pi}^\p\}$.
\end{proposition}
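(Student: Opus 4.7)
The plan is to verify the three defining axioms (C1)--(C3) of a conditional probability for $\tilde{P}^\fd$ on $\A \times \A_\L^0$ and to check that $\tilde{P}^\fd$ agrees with $\sigma$, $\tilde{\pi}^\p$ and $P^\dd$ on the appropriate subdomains. First I would observe that the right-hand side of (\ref{eq:full-dis}) is well-defined: by (C2) applied to the f.c.p.\ of which $\tilde{\pi}^\p$ is a restriction, each section $\tilde{\pi}^\p(\cdot|K)$ is a finitely additive probability on $\A_\L$, and since $\sigma(F|\cdot)$ is $\A_\L$-continuous by standing assumption, S-integrability follows from Section~\ref{sec:preliminaries}.

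For the extension claims: when $F \in \A_\L$ one has $\sigma(F|H_i) = {\bf 1}_F(H_i)$ on $\L$, whose S-integral against $\tilde{\pi}^\p(\cdot|K)$ collapses to $\tilde{\pi}^\p(F|K)$, specialising to $\pi(F)$ at $K = \Omega$; when $K = H_i \in \L$ the atomicity of $H_i$ in $\A_\L$ forces $\tilde{\pi}^\p(\cdot|H_i)$ to be the point mass at $H_i$, and an approximation argument based on $\A_\L$-continuity of $\sigma(F|\cdot)$ at $H_i$ gives $\tilde{P}^\fd(F|H_i) = \sigma(F|H_i)$; specialising to $K = \Omega$ with arbitrary $F \in \A$ then recovers $P^\dd(F)$. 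For (C1) I would refine the partitions used in the Stieltjes sums to be compatible with $K \in \A_\L^0$: parts in $K^c$ contribute zero since $\tilde{\pi}^\p(\cdot|K)$ vanishes there, and for every atom $H_i \subseteq K$ the strategy satisfies $\sigma(F|H_i) = \sigma(F \wedge K|H_i)$ as an immediate consequence of (S1) and (S2). Axiom (C2) reduces to $\sigma(\Omega|\cdot) \equiv 1$ for normalisation, together with the additivity of each $\sigma(\cdot|H_i)$ transferred through the linearity of the S-integral on sums of $\A_\L$-continuous functions.

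The main obstacle is (C3). Given $K, E \wedge K \in \A_\L^0$ with $E, F \in \A$, I would set $B := E \wedge K \in \A_\L$. Since each $H_i$ is an atom of $\A_\L$, every $H_i \subseteq K$ is either contained in $B$ (whence $\sigma(E|H_i) = 1$ and $\sigma(E \wedge F|H_i) = \sigma(F|H_i)$ by (C1) for $\sigma$) or in $K \wedge E^c$ (whence both $\sigma(E|H_i)$ and $\sigma(E \wedge F|H_i)$ vanish). This identifies $\tilde{P}^\fd(E|K) = \tilde{\pi}^\p(B|K)$ and reduces the left-hand side of (C3) to $\int \sigma(F|H_i) {\bf 1}_B(H_i) \tilde{\pi}^\p(\d H_i|K)$, so everything hinges on the factorisation lemma
\[
\int f(H_i) {\bf 1}_B(H_i) \tilde{\pi}^\p(\d H_i|K) = \tilde{\pi}^\p(B|K) \int f(H_i) \tilde{\pi}^\p(\d H_i|B),
\]
valid for every $\A_\L$-continuous $f$ on $\L$ and every $B \subseteq K$ in $\A_\L^0$.

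I would prove this lemma by refining the partitions underlying the upper and lower Stieltjes sums along $B$ and $B^c$: parts in $B^c$ contribute zero on the left, and on each part $A_u \subseteq B$ axiom (C3) applied to $\tilde{\pi}^\p$ gives $\tilde{\pi}^\p(A_u|K) = \tilde{\pi}^\p(B|K)\,\tilde{\pi}^\p(A_u|B)$, producing the needed rescaling by $\tilde{\pi}^\p(B|K)$. On the right-hand side the partitions of $\Omega$ can likewise be restricted to $B$, because parts disjoint from $B$ carry no $\tilde{\pi}^\p(\cdot|B)$-mass; matching the two infima (resp.\ suprema) across these refinements delivers the lemma, with $\A_\L$-continuity of $f$ ensuring that the upper and lower S-integrals on the right coincide. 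The degenerate case $\tilde{\pi}^\p(B|K) = 0$ is handled uniformly by the same refinement, giving zero on both sides. Applying the lemma with $f = \sigma(F|\cdot)$ completes the proof of (C3), and hence of the proposition.
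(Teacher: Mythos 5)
Your proof is correct and follows essentially the same route as the paper: both verify \textbf{(C1)}--\textbf{(C3)} and the extension claims via properties of the strategy and the Stieltjes integral, and your factorisation lemma is precisely the paper's key step for \textbf{(C3)}, written there in the compact form $\tilde{\pi}^\p(\d H_i|K)/\tilde{P}^\fd(E|K) = \tilde{\pi}^\p(\d H_i|E \wedge K)$. Your partition-refinement argument just makes explicit what the paper leaves to the reader.
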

\begin{proof}[Proof of Proposition~\ref{prop:fully-L-dis}]
The properties of the Stieltjes integral \cite{bhaskara} immediately imply that $\tilde{P}^\fd$ extends $\{P^\dd,\sigma,\tilde{\pi}^\p\}$.
We show that $\tilde{P}^\fd$ is a conditional probability on $\A \times \A_\L^0$. Conditions {\bf (C1)} and {\bf (C2)} follow by properties {\bf (S1)} and {\bf (S2)} of $\sigma$ and linearity of the Stieltjes integral. Finally, condition {\bf (C3)} follows since  for every $K, E\wedge K\in \A_\L^0$ and $E, F\in\A$, the equation
$\tilde{P}^\fd(E\wedge F|K)= \tilde{P}^\fd(E|K) \cdot \tilde{P}^\fd(F|E\wedge K)$ is trivially satisfied if $\tilde{P}^\fd(E|K) = 0$, while if
$\tilde{P}^\fd(E|K) > 0$ it holds
\begin{eqnarray*}
\frac{\tilde{P}^\fd(E \wedge F|K)}{\tilde{P}^\fd(E|K)} &=& \frac{1}{\tilde{P}^\fd(E|K)} \int \sigma(E \wedge F|H_i)\tilde{\pi}^\p(\d H_i|K)\\
&=& \int \sigma(E \wedge F|H_i)\frac{\tilde{\pi}^\p(\d H_i|K)}{\tilde{P}^\fd(E|K)}\\
&=& \int \sigma(E \wedge F|H_i)\tilde{\pi}^\p(\d H_i|E \wedge K) = \tilde{P}^\fd(F|E \wedge K).
\end{eqnarray*}
\end{proof}

The fully $\L$-disintegrable extension $\tilde{P}^\fd$ can then be further extended through coherence to a full conditional probability $\tilde{Q}$ on $\A$. The extension $\tilde{Q}$ is generally not unique so we have a set
$$
\Q_{\tilde{P}^\fd} = \left\{\tilde{Q}\,:\, \mbox{f.c.p on $\A$ extending $\tilde{P}^\fd$}\right\}
$$
whose lower envelope is $\u{Q}_{\tilde{P}^\fd} = \min \Q_{\tilde{P}^\fd}$.

\begin{lemma}
\label{lem:full-fully-L-dis}
The lower envelope $\u{Q}_{\tilde{P}^\fd}(\cdot|\cdot)$ is such that for every
$F|K \in \A \times \A^0$ it holds $\u{Q}_{\tilde{P}^\fd} (F|K) = 1$ when $F \wedge K = K$ and if $F \wedge K \neq K$
$$
\u{Q}_{\tilde{P}^\fd} (F|K) =
\left\{
\begin{array}{ll}
\frac{\tilde{P}^\fd(F \wedge K|A)}{\tilde{P}^\fd(K|A)} & \mbox{if $\exists \, A \in \A_\L^0$ s.t. $K \subseteq A$ and $\tilde{P}^\fd(K|A) > 0$},\\[2ex]
0 & \mbox{otherwise}.
\end{array}
\right.
$$
\end{lemma}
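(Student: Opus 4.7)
The plan is to split the statement according to the structure of $F|K$ and use properties of coherent conditional probabilities. The trivial case $F\wedge K=K$ follows at once from (C1), which forces every $\tilde{Q}\in\Q_{\tilde{P}^\fd}$ to assign value $1$. The remaining two cases correspond to whether the conditional prior $\tilde{\pi}^\p$ (equivalently $\tilde{P}^\fd$) makes $K$ ``visible'' through some $A\in\A_\L^0$.

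In case (i), I would fix any $A\in\A_\L^0$ with $K\subseteq A$ and $\tilde{P}^\fd(K|A)>0$. Since $K\subseteq A$ we have $K\wedge A=K$ and $F\wedge K\wedge A=F\wedge K$, so applying (C3) to an arbitrary $\tilde{Q}\in\Q_{\tilde{P}^\fd}$ yields
$$
\tilde{Q}(F\wedge K|A)=\tilde{Q}(K|A)\cdot\tilde{Q}(F|K).
$$
Because $\tilde{Q}$ restricts to $\tilde{P}^\fd$ on $\A\times\A_\L^0$, both $\tilde{Q}(K|A)=\tilde{P}^\fd(K|A)>0$ and $\tilde{Q}(F\wedge K|A)=\tilde{P}^\fd(F\wedge K|A)$, so $\tilde{Q}(F|K)$ is forced to the value $\tilde{P}^\fd(F\wedge K|A)/\tilde{P}^\fd(K|A)$. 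Since this value does not depend on $\tilde{Q}$, the lower envelope $\u{Q}_{\tilde{P}^\fd}(F|K)$ equals it. Independence from the choice of $A$ is automatic (the left-hand side does not refer to $A$); one can also see it directly by applying (C3) to $\tilde{P}^\fd$ on a pair of nested candidates $K\subseteq A'\subseteq A$ and comparing ratios.

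In case (ii), where $\tilde{P}^\fd(K|A)=0$ for every $A\in\A_\L^0$ containing $K$, the goal is to exhibit $\tilde{Q}^*\in\Q_{\tilde{P}^\fd}$ with $\tilde{Q}^*(F|K)=0$, which immediately gives $\u{Q}_{\tilde{P}^\fd}(F|K)=0$ (note that $F^c\wedge K\neq\emptyset$ here, since $F\wedge K\neq K$). I would consider the enlarged assessment $\hat{P}=\tilde{P}^\fd\cup\{(F|K,0)\}$ on $(\A\times\A_\L^0)\cup\{F|K\}$ and prove it coherent; Theorem~\ref{th:extension} would then supply a full conditional probability extension $\tilde{Q}^*$ on $\A\times\A^0$ with $\tilde{Q}^*(F|K)=0$, which belongs to $\Q_{\tilde{P}^\fd}$. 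Coherence of $\hat{P}$ is checked on finite subfamilies by the linear-programming characterisation of the lower coherent bound $\u{P}^\F(F|K)$ used in the proof of Theorem~\ref{th:ext-prob-lik-inf}(ii): the atoms of the algebra generated by $\F\cup\{F,K\}$ stratify into zero-layers induced by $\tilde{P}^\fd$, and the assumption that $K$ is $\tilde{P}^\fd$-null from every $A\in\A_\L^0$ above it means that at the top layer no mass is placed on atoms below $K$, so one is always free to concentrate the conditional mass given $K$ on the nonempty atoms below $F^c\wedge K$, achieving objective value $0$.

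The main obstacle is the coherence verification in case (ii): one has to propagate the layered zero-mass condition through the optimisation for an arbitrary finite $\F\subseteq\A\times\A_\L^0$ (not only for the special families used in the proof of Theorem~\ref{th:ext-prob-lik-inf}(ii)), and argue that the resulting system always admits a solution with $\sum_{C_r\subseteq F\wedge K}x_r=0$. The key point making this go through is that the hypothesis $\tilde{P}^\fd(K|A)=0$ for all $A\in\A_\L^0$ with $K\subseteq A$ prevents any finite subfamily from forcing positive mass on $F\wedge K$ at the layer where $K$ first becomes positive; hence $\u{P}^\F(F|K)=0$ for every such $\F$, and coherence of $\hat{P}$ follows.
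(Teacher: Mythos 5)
Your overall route coincides with the paper's: the case $F\wedge K=K$ and the case where some $A\in\A_\L^0$ with $K\subseteq A$ has $\tilde{P}^\fd(K|A)>0$ are dispatched exactly as you do (the paper treats them as trivial), and for the remaining case the paper likewise reduces, via Theorem~\ref{th:extension}, to showing $\u{Q}_{\tilde{P}^\fd}^{\F}(F|K)=0$ for every finite subfamily $\F$, which it takes of the product form $\B\times\B_\L^0$ with $\{F,K\}\subseteq\B$. The difference lies in how that finite statement is settled: the paper closes it by invoking Theorem~4 of \cite{cpv-ins2014}, whereas you sketch the zero-layer linear-programming argument directly and, as you yourself acknowledge, leave its decisive step as an assertion.

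That step is a genuine gap as written. The hypothesis controls $\tilde{P}^\fd(K|A)$ only for $A\in\A_\L^0$ \emph{containing} $K$; it says nothing about events $A\in\B_\L^0$ with $A\wedge K\neq\emptyset$ but $K\not\subseteq A$, for which $\tilde{P}^\fd(K\wedge A|A)$, and even $\tilde{P}^\fd(F\wedge K\wedge A|A)$, may be strictly positive (this can happen when $A$ itself is null at the preceding layers). If such an $A$ is activated at the same layer at which $K$ first receives positive mass, the proportionality constraints $P_\beta(E\wedge A)=\tilde{P}^\fd(E|A)\,P_\beta(A)$ inject positive mass into $F\wedge K$, and your claim that one is ``always free to concentrate the conditional mass given $K$ on $F^c\wedge K$'' fails at that layer. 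What rescues the argument --- and what you would need to prove --- is that the activation order is not forced: since $\tilde{P}^\fd(K|K^*_{\B_\L})=0$ for the minimal $K^*_{\B_\L}\in\B_\L^0$ above $K$, every atom below $K$ remains null at each layer up to and including the one activating $K^*_{\B_\L}$, and at the next layer one may put all the mass on a single atom below $F^c\wedge K\neq\emptyset$, thereby activating $K$ with $P(F\wedge K)=0$ while violating no constraint, because no element of $\B_\L^0$ need be activated at that layer and the remaining conditioning events can be deferred to later layers. Supplying this rearrangement argument explicitly (or simply citing the paper's external reference) is what separates your sketch from a complete proof; case \emph{(i)} and the reduction to finite subfamilies are correct as you present them.
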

\begin{proof}[Proof of Lemma~\ref{lem:full-fully-L-dis}]
The proof is trivial in case $F \wedge K = K$ or there exists $A \in \A_\L^0$ s.t. $K \subseteq A$ and $\tilde{P}^\fd(K|A) > 0$, thus suppose $F \wedge K \neq K$ and $\tilde{P}^\fd(K|A) = 0$ for every $A \in \A_\L^0$ with $K \subseteq A$.

Under this hypothesis, let $\G = \A \times \A_\L^0$. By Theorem~\ref{th:extension}, for every $F|K \in \A \times \A^0$, the interval of coherent extensions $\II_{F|K} = \left[\u{Q}_{\tilde{P}^\fd} (F|K), \o{Q}_{\tilde{P}^\fd} (F|K)\right]$ can be computed in terms of finite subfamilies of $\G$.

Since for every $\F_1 \subseteq \F_2 \subseteq \G$ and $\card \F_2 < \aleph_0$ one has $\u{Q}_{\tilde{P}^\fd}^{\F_1}(F|K) \le \u{Q}_{\tilde{P}^\fd}^{\F_2}(F|K)$, we can restrict to finite subfamilies of $\G$ of the form $\F = \B \times \B_\L^0$ where $\B \subseteq \A$ and $\B_\L \subseteq \A_\L$ are finite Boolean algebras with $\{F,K\} \subseteq \B$. Indeed, every finite subfamily can be suitably enlarged in order to meet this form. Now, Theorem~4 in \cite{cpv-ins2014}
implies that $\u{Q}_{\tilde{P}^\fd}^\F(F|K) = 0$ and since this holds for every finite subfamily $\F$ the proof follows.
\end{proof}

Next lemma investigates the topology of the set $\P^\fd$ of fully $\L$-disintegrable extensions of $\{\pi,\sigma\}$ on $\A \times \A_\L^0$.

\begin{lemma}
\label{lem:compact-fully-dis}
The set $\P^\fd$ is a non-empty compact subset of the space $[0,1]^{\A \times \A_\L^0}$ endowed with the product topology of pointwise convergence.
\end{lemma}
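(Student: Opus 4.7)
The plan is to establish non-emptiness and compactness separately. Non-emptiness follows from Theorem~\ref{th:extension} applied to $\pi$ on $\A_\L$, which produces a non-empty set $\P^\p$ of coherent conditional prior probabilities, together with Proposition~\ref{prop:fully-L-dis}: given any $\tilde{\pi}^\p \in \P^\p$, the Stieltjes-integral formula (\ref{eq:full-dis}) yields an element $\tilde{P}^\fd \in \P^\fd$.

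For compactness, $[0,1]^{\A \times \A_\L^0}$ is compact by Tychonoff's theorem, so it suffices to prove that $\P^\fd$ is closed in the product topology of pointwise convergence. Let $\{\tilde{P}_\alpha\}$ be a net in $\P^\fd$ converging pointwise to some $\tilde{P} \colon \A \times \A_\L^0 \to [0,1]$. The conditional probability axioms {\bf (C1)}--{\bf (C3)} and the extension constraints $\tilde{P}(B|\Omega) = \pi(B)$ for $B \in \A_\L$ and $\tilde{P}(F|H_i) = \sigma(F|H_i)$ for $H_i \in \L$, $F \in \A$, are equalities among finite algebraic combinations of values, so they are preserved in the pointwise limit. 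Hence $\tilde{P}$ is automatically a coherent conditional probability on $\A \times \A_\L^0$ extending $\{\pi,\sigma\}$.

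The crucial step is to verify that $\tilde{P}$ remains fully $\L$-disintegrable, i.e., that
$$\tilde{P}(F|K) = \int \sigma(F|H_i)\,\tilde{\pi}(\d H_i|K) \quad \text{for every } F|K \in \A \times \A_\L^0,$$
where $\tilde{\pi} = \tilde{P}_{|\A_\L \times \A_\L^0}$. Each $\tilde{P}_\alpha$ satisfies this with its own restriction $\tilde{\pi}_\alpha$, so the question is whether the Stieltjes integral of $\sigma(F|\cdot)$ is continuous with respect to pointwise convergence of the underlying finitely additive conditional prior measures. This is precisely where the standing $\A_\L$-continuity of $\sigma(F|\cdot)$ becomes decisive. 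Fix $\epsilon > 0$: from the definitions of $\u{\int}$ and $\o{\int}$ combined with the $\A_\L$-continuity of $\sigma(F|\cdot)$, produce a finite partition $\L^\F = \{A_u\}_{u=1}^m \subseteq \A_\L$ of $\Omega$, refining $\{K,K^c\}$, on which the oscillation $\sup_{H_i \subseteq A_u}\sigma(F|H_i) - \inf_{H_i \subseteq A_u}\sigma(F|H_i)$ is uniformly smaller than $\epsilon$. The two induced step functions sandwich $\sigma(F|\cdot)$ within $\epsilon$, and their Stieltjes integrals with respect to any element of $\P^\p$ reduce to finite linear combinations of the values on the atoms $A_u \subseteq K$. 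Pointwise convergence $\tilde{\pi}_\alpha(A_u|K) \to \tilde{\pi}(A_u|K)$ then yields convergence of those finite sums, and letting $\epsilon \downarrow 0$ after passing to the limit in $\alpha$ establishes the required integral identity, so $\tilde{P} \in \P^\fd$.

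The main obstacle is precisely this last passage: pointwise convergence of finitely additive probabilities does not in general preserve integrals of bounded functions, so the argument must exploit the $\A_\L$-continuity hypothesis to reduce Stieltjes integration to a uniformly approximating simple function whose integral depends only on finitely many values of the measure, which in turn are controlled by the product topology of pointwise convergence.
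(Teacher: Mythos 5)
Your proof is correct and follows the same overall architecture as the paper's: non-emptiness is obtained from the non-empty set $\P^\p$ of coherent conditional priors together with the integral formula of Proposition~\ref{prop:fully-L-dis}, and compactness from Tychonoff's theorem plus closedness under pointwise limits of nets, with the preservation of {\bf (C1)}--{\bf (C3)} and of the constraints given by $\pi$ and $\sigma$ handled just as in the paper (which delegates that step to a closure theorem of Regazzini rather than observing, as you do, that these are finite algebraic identities stable under pointwise limits). The one substantive difference lies in the crucial passage to the limit in the disintegration identity: the paper simply invokes Theorem~3.6 of Girotto--Holzer, which asserts convergence of Stieltjes integrals of an $\A_\L$-continuous integrand under pointwise convergence of finitely additive probabilities, whereas you prove this directly. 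Your argument is sound: from $\A_\L$-continuity at finitely many levels $t_k$ one obtains nested events in $\A_\L$ whose generated finite partition has oscillation of $\sigma(F|\cdot)$ below $\epsilon$ on each atom, the integral is then sandwiched between finite sums depending on only finitely many values of the conditional measures (which converge in the product topology), and a three-$\epsilon$ estimate concludes. What your route buys is a self-contained, elementary proof of exactly the convergence fact the paper outsources, and it correctly isolates both the only point where pointwise convergence of finitely additive measures could fail to commute with integration and the precise role of the standing $\A_\L$-continuity hypothesis in preventing that failure.
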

\begin{proof}[Proof of Lemma~\ref{lem:compact-fully-dis}]
The coherence of $\{\pi,\sigma\}$ implies that the set $\P^\p$ of coherent extensions to $\A_\L \times \A_\L^0$ is not empty, moreover, for every $\tilde{\pi}^\p \in \P^\p$, the function $\tilde{P}^\fd$ defined as in equation (\ref{eq:full-dis})
is an element of $\P^\fd$, thus it is not empty.

We prove the compactness of $\P^\fd$. By Thychonoff's theorem $[0,1]^{\A \times \A_\L^0}$ is a compact space endowed with the product topology of pointwise convergence, moreover, it is Hausdorff. Hence, it is sufficient to prove that $\P^\fd$ is a closed subset of $[0,1]^{\A \times \A_\L^0}$ as closed subsets of a compact space are compact.


Thus let $(\tilde{P}^\fd_\alpha)_\alpha$ be a net of elements of $\P^\fd$ converging pointwise to $\tilde{P}^\fd$ and denote with $(\tilde{\pi}^\p_\alpha)_\alpha$ and $\tilde{\pi}^\p$, respectively, the corresponding restrictions on $\A_\L \times \A_\L^0$.

Theorem~5 in \cite{regazzini} implies that $\tilde{P}^\fd$ is a conditional probability extending $\{\pi,\sigma\}$, thus,
it remains to prove that $\tilde{P}^\fd$ is a fully $\L$-disintegrable extension of $\{\pi,\sigma\}$. For every $\alpha$ and every $E|H \in \A \times \A_\L^0$ we have
$$
\tilde{P}^\fd_\alpha(E|H) = \int \sigma(E|H_i)\tilde{\pi}^\p_\alpha(\d H_i | H),
$$
moreover, since for every fixed $H \in \A_\L^0$ the net of finitely additive probabilities $(\tilde{\pi}^\p_\alpha(\cdot|H))_\alpha$ on $\A_\L$ converges pointwise to the finitely additive probability $\tilde{\pi}^\p(\cdot|H)$ on $\A_\L$, Theorem~3.6 in \cite{girotto} implies $\int \sigma(E|H_i) \tilde{\pi}^\p_\alpha(\d H_i | H) \longrightarrow \int \sigma(E|H_i) \tilde{\pi}^\p(\d H_i | H)$ that is $\tilde{P}^\fd_\alpha(E|H) \longrightarrow \int \sigma(E|H_i) \tilde{\pi}^\p(\d H_i | H)$ and so we have $\tilde{P}^\fd(E|H) = \int \sigma(E|H_i) \tilde{\pi}^\p(\d H_i | H)$.
\end{proof}

The following lemma characterizes the lower envelope of the set $\u{P}^\fd = \min \P^\fd$.

\begin{lemma}
\label{lem:lower-fully-L-dis}
The lower envelope $\u{P}^\fd(\cdot|\cdot)$ is such that for every
$F|K \in \A \times \A_\L^0$ it holds $\u{P}^\fd(F|K) = 1$ when $F \wedge K = K$ and if $F \wedge K \neq K$
$$
\u{P}^\fd(F|K) = \intC \sigma(F|H_i) \u{\pi}^\p(\d H_i|K) =
\left\{
\begin{array}{ll}
\frac{\int \sigma(F \wedge K|H_i)\pi(\d H_i)}{\pi(K)} & \mbox{if $\pi(K) > 0$},\\[2ex]
\inf\limits_{H_i \subseteq K}\sigma(F|H_i) & \mbox{otherwise}.
\end{array}
\right.
$$
\end{lemma}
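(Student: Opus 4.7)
The plan is to reduce the minimization defining $\u{P}^\fd(F|K)$ to a minimization over the set $\P^\p$ of coherent conditional priors, and then exploit the totally monotone structure of $\u{\pi}^\p(\cdot|K)$ established in Corollary~\ref{cor:cond-prior}. By Proposition~\ref{prop:fully-L-dis} every $\tilde{\pi}^\p \in \P^\p$ induces a fully $\L$-disintegrable extension via formula (\ref{eq:full-dis}), while every $\tilde{P}^\fd \in \P^\fd$ restricts on $\A_\L \times \A_\L^0$ to some element of $\P^\p$. Hence
$$
\u{P}^\fd(F|K) \;=\; \min_{\tilde{\pi}^\p \in \P^\p} \int \sigma(F|H_i)\, \tilde{\pi}^\p(\d H_i|K).
$$

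Next I would show that the projection $\{\tilde{\pi}^\p(\cdot|K) \,:\, \tilde{\pi}^\p \in \P^\p\}$ coincides with the core $\P_{\u{\pi}^\p(\cdot|K)}$. The inclusion ``$\subseteq$'' is immediate from the definition of lower envelope. For the reverse inclusion, given $\mu \in \P_{\u{\pi}^\p(\cdot|K)}$, Corollary~\ref{cor:cond-prior} shows that $\mu$ is either forced to equal $\pi(\cdot \wedge K)/\pi(K)$ when $\pi(K) > 0$, or an arbitrary finitely additive probability concentrated on $K$ when $\pi(K) = 0$; in either case the pair $\{\pi, \mu(\cdot|K)\}$ is coherent on $(\A_\L \times \{\Omega\}) \cup (\A_\L \times \{K\})$ and so extends by Theorem~\ref{th:extension} to some $\tilde{\pi}^\p \in \P^\p$ (Remark~\ref{rem:no-strategy} ensures $\sigma$ imposes no further restriction). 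Since $\u{\pi}^\p(\cdot|K)$ is totally monotone and $\sigma(F|\cdot)$ is $\A_\L$-continuous by the standing assumption of this appendix, the core representation of the Choquet integral recalled in the preliminaries yields
$$
\u{P}^\fd(F|K) \;=\; \min_{\mu \in \P_{\u{\pi}^\p(\cdot|K)}} \int \sigma(F|H_i)\, \mu(\d H_i) \;=\; \intC \sigma(F|H_i)\, \u{\pi}^\p(\d H_i|K).
$$

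It then remains to evaluate this Choquet integral explicitly. If $\pi(K) > 0$, the capacity $\u{\pi}^\p(\cdot|K)$ is finitely additive, the Choquet integral collapses to the ordinary Stieltjes integral, and the pointwise identity $\sigma(F|H_i)\mathbf{1}_K(H_i) = \sigma(F \wedge K|H_i)$ delivers the first branch of the stated formula. If $\pi(K) = 0$, then $\u{\pi}^\p(\cdot|K)$ is the totally monotone capacity vacuous at $K$, whose inner measure on $\langle\L\rangle^*$ satisfies $\u{\pi}^\p_*(\sigma(F|\cdot) \ge t \,|\, K) = 1$ exactly when $t \le \inf_{H_i \subseteq K}\sigma(F|H_i)$, so integrating over $t$ yields the second branch. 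The main obstacle I anticipate is the ``$\supseteq$'' direction of the core identification in the case $\pi(K) = 0$: one has to verify carefully that every finitely additive probability concentrated on $K$ is actually realized by a coherent extension of $\{\pi,\sigma\}$ to $\A_\L \times \A_\L^0$, which is precisely where the vacuous character of the strategy constraint on the prior part, noted in Remark~\ref{rem:no-strategy}, becomes essential.
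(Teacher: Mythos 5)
Your proposal is correct and follows essentially the same route as the paper, whose proof of this lemma is a one-line appeal to Corollary~\ref{cor:cond-prior} and the properties of the Choquet integral; you have simply filled in the details that the paper leaves implicit (the reduction of the minimum over $\P^\fd$ to a minimum over $\P^\p$, the identification of the projection of $\P^\p$ at $K$ with the core of $\u{\pi}^\p(\cdot|K)$ via Remark~\ref{rem:no-strategy}, the Schmeidler core representation, and the explicit evaluation of the Choquet integral in the two cases $\pi(K)>0$ and $\pi(K)=0$). The point you flag as the main obstacle --- realizing every finitely additive probability concentrated on a $\pi$-null $K$ as a coherent conditional extension of $\pi$ --- is indeed the only nontrivial step, and your sketch of it via Theorem~\ref{th:extension} is sound.
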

\begin{proof}[Proof of Lemma~\ref{lem:lower-fully-L-dis}]
The proof immediately follows by Corollary~\ref{cor:cond-prior} and the properties of the Choquet integral \cite{denneberg}.
\end{proof}

Finally, the set of fully $\L$-disintegrable full conditional probabilities on $\A$ extending $\{\pi,\sigma\}$ is given by
\begin{eqnarray*}
\Q^\fd &=& \{\tilde{Q} \,:\, \mbox{fully $\L$-disintegrable f.c.p. on $\A$ extending $\{\pi,\sigma\}$}\}\\
&=& \bigcup \{\Q_{\tilde{P}^\fd} \,:\, \tilde{P}^\fd \in \P^\fd\}.\nonumber
\end{eqnarray*}

The set $\Q^\fd$ turns out to be a compact subset of the space $[0,1]^{\A \times \A^0}$ endowed with the product topology of pointwise convergence.

\begin{proof}[Proof of Theorem~\ref{th:compact}]
The proof follows by the proof of Lemma~\ref{lem:compact-fully-dis} and the closure of coherent conditional probabilities under limits of nets \cite{regazzini}.
\end{proof}

Next theorem characterizes the lower envelope $\u{Q}^\fd$ of the set $\Q^\fd$, relying on the following functions, defined for $F \in \A$, $K \in \A^0$ and $A \in \A_\L^0$ with $K \subseteq A$ as
\begin{eqnarray}
L^\fd(F, K;A) &=& \min\left\{\tilde{P}^\fd(F \wedge K|A) \,:\, \tilde{P}^\fd \in \P^\fd,\right.\label{eq:Ld}\\\nonumber
&&~~~~~~~~~~~~\left.\tilde{P}^\fd(F^c \wedge K|A) = \o{P}^\fd(F^c \wedge K|A)\right\},\\
U^\fd(F, K;A) &=& \max\left\{\tilde{P}^\fd(F \wedge K|A) \,:\, \tilde{P}^\fd \in \P^\fd,\right.\label{eq:Ud}\\\nonumber
&&~~~~~~~~~~~~\left.\tilde{P}^\fd(F^c \wedge K|A) = \u{P}^\fd(F^c \wedge K|A)\right\},
\end{eqnarray}
for which it holds
$$
\u{P}^\fd(F \wedge K|A) \le L^\fd(F, K;A) \le U^\fd(F, K;A) \le \o{P}^\fd(F \wedge K|A).
$$

\begin{remark}
\label{rem:compact}
Notice that in (\ref{eq:Ld}) we can write $\min$ in place of $\inf$ since the set
$
\D = \left\{\tilde{P}^\fd \in \P^\fd \,:\, \tilde{P}^\fd(F^c \wedge K|A) = \o{P}^\fd(F^c \wedge K|A)\right\}
$
is a closed (and so compact) subset of $\P^\fd$ endowed with the relative topology inherited by $[0,1]^{\A \times \A_\L^{0}}$. Indeed, every converging net $(\tilde{P}^\fd_\alpha)_\alpha$ contained in $\D$ has limit $\tilde{P}^\fd$ contained in $\D$ since $\tilde{P}^\fd$ belongs to $\P^\fd$ and $\lim_\alpha \tilde{P}^\fd_\alpha(F^c \wedge K|A) =
\lim_\alpha \o{P}^\fd(F^c \wedge K|A) = \o{P}^\fd(F^c \wedge K|A)$. Analogous consideration holds for equation (\ref{eq:Ud}).
\end{remark}

In order to prove the following lemma it is useful to recall that a coherent conditional probability $P$ on a set $\G$ of conditional events is monotone with respect to the following implication among conditional events \cite{gn}
\begin{equation}
E|H \gn F|K \Longleftrightarrow
\left\{
\begin{array}{ll}
E \wedge H \subseteq F \wedge K,\\
E^c \wedge H \supseteq F^c \wedge K,
\end{array}
\right.
\end{equation}
that is $E|H \gn F|K \Longrightarrow P(E|H) \le P(F|K)$, for $E|H,F|K \in \G$.

\begin{lemma}
\label{lem:lower-fully-L-dis-full}
The lower envelope $\u{Q}^\fd(\cdot|\cdot)$ is such that
for every $F|K \in \A \times \A^0$ it holds $\u{Q}^\fd(F|K) = 1$ when $F \wedge K = K$ and if $F \wedge K \neq K$, then
if there exists $A \in \A_\L^0$ such that $K \subseteq A$ and $\u{P}^\fd(K|A) > 0$ we have that
\begin{eqnarray*}
\u{Q}^\fd(F|K) &=& \min\left\{
\frac{\u{P}^\fd(F \wedge K|A)}{\u{P}^\fd(F \wedge K|A) + U^\fd(F^c,K;A)},\right.\\
&&~~~~~~~~~~~~\left.\frac{L^\fd(F,K;A)}{L^\fd(F,K;A) + \o{P}^\fd(F^c \wedge K|A)}
\right\},
\end{eqnarray*}
otherwise $\u{Q}^\fd(F|K) = 0$.
\end{lemma}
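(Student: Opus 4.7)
The plan is to combine the decomposition $\Q^\fd = \bigcup_{\tilde P^\fd \in \P^\fd} \Q_{\tilde P^\fd}$ (noted in Section~\ref{sec:disintegrable}) with Lemma~\ref{lem:full-fully-L-dis} to obtain
$$
\u{Q}^\fd(F|K) = \min_{\tilde P^\fd \in \P^\fd} \u{Q}_{\tilde P^\fd}(F|K),
$$
the minimum being attained thanks to the compactness established in Theorem~\ref{th:compact}. The case $F \wedge K = K$ is immediate, so I assume $F \wedge K \neq K$ and distinguish the two regimes of Lemma~\ref{lem:full-fully-L-dis} according to whether the chosen $\tilde P^\fd$ gives $\tilde P^\fd(K|A)>0$ for some $A\in\A_\L^0$ containing $K$.

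For the ``otherwise'' case, where $\u{P}^\fd(K|A) = 0$ for every $A \in \A_\L^0$ containing $K$, the aim is to exhibit a single $\tilde P^\fd_0 \in \P^\fd$ satisfying $\tilde P^\fd_0(K|A) = 0$ simultaneously for all such $A$, so that Lemma~\ref{lem:full-fully-L-dis} gives $\u{Q}_{\tilde P^\fd_0}(F|K) = 0$. For each admissible $A$ the set
$$
\P^\fd_A = \{\tilde P^\fd \in \P^\fd : \tilde P^\fd(K|A) = 0\}
$$
is a closed, hence compact, subset of $\P^\fd$ (Lemma~\ref{lem:compact-fully-dis}), and it is nonempty because $\u{P}^\fd(K|A) = 0$ is attained. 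The family $\{\P^\fd_A\}_A$ has the finite intersection property: for $A_1, \ldots, A_n \in \A_\L^0$ containing $K$, let $A = \bigwedge_{k=1}^n A_k$ (still in $\A_\L^0$ and still containing $K$); any $\tilde P^\fd \in \P^\fd_A$ satisfies, by the chain rule \textbf{(C3)} applied to $K = K \wedge A \subseteq A \subseteq A_k$,
$$
\tilde P^\fd(K|A_k) = \tilde P^\fd(A|A_k) \cdot \tilde P^\fd(K|A) = 0,
$$
so $\P^\fd_A \subseteq \bigcap_{k=1}^n \P^\fd_{A_k}$, which is therefore nonempty. Compactness of $\P^\fd$ then gives $\bigcap_A \P^\fd_A \neq \emptyset$, and any of its elements serves as the required $\tilde P^\fd_0$.

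For the main case, fix $A \in \A_\L^0$ with $K \subseteq A$ and $\u{P}^\fd(K|A) > 0$; then every $\tilde P^\fd \in \P^\fd$ satisfies $\tilde P^\fd(K|A) > 0$, and Lemma~\ref{lem:full-fully-L-dis} rewrites
$$
\u{Q}_{\tilde P^\fd}(F|K) = \frac{x}{x+y}, \qquad x = \tilde P^\fd(F \wedge K|A),\; y = \tilde P^\fd(F^c \wedge K|A).
$$
Since $f(x,y) = x/(x+y)$ is nondecreasing in $x$ and nonincreasing in $y$, and the projection of the compact set $\P^\fd$ (Lemma~\ref{lem:compact-fully-dis}) onto the $(x,y)$ coordinates is compact, the minimum is attained at a pair where either $x = \u{P}^\fd(F \wedge K|A)$ with $y$ maximized to $U^\fd(F^c, K; A)$ (the max being attained by the closedness noted in Remark~\ref{rem:compact}), or $y = \o{P}^\fd(F^c \wedge K|A)$ with $x$ minimized to $L^\fd(F, K; A)$. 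Taking the smaller of the two values yields the stated formula. The main obstacle is justifying this reduction rigorously, i.e.\ ruling out an interior minimizer $(x^*, y^*)$ with $x^* > \u{P}^\fd(F \wedge K|A)$ and $y^* < \o{P}^\fd(F^c \wedge K|A)$; this is handled exactly as in the analogous step of part (i) of Theorem~\ref{th:ext-prob-lik-inf}, invoking the GN-monotonicity of coherent conditional probabilities recalled just before the statement to force the minimum onto the two boundary pairs captured by $L^\fd$ and $U^\fd$.
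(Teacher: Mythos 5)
Your proposal is correct and follows essentially the same route as the paper: the positive case is handled by the same monotonicity argument on $x/(x+y)$ reducing the minimum to the two boundary pairs $L^\fd$ and $U^\fd$, and the degenerate case by the same finite-intersection-property argument in the compact set $\P^\fd$ to produce a single $\tilde{P}^\fd_0$ vanishing on $K$ given every $A$. The only (cosmetic) differences are that you index the closed family by the events $A$ themselves and propagate the zeros via the product rule \textbf{(C3)}, whereas the paper indexes by finite subalgebras $\B_\L$ via $K^*_{\B_\L}$ and uses monotonicity with respect to the relation $\gn$.
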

\begin{proof}[Proof of Lemma~\ref{lem:lower-fully-L-dis-full}]
The statement is trivial if $F \wedge K = K$, thus suppose $F \wedge K \neq K$.

If there exists $A \in \A_\L^0$ with $K \subseteq A$ such that $\u{P}^\fd(K|A) > 0$, then it holds $\tilde{P}^\fd(K|A) > 0$ for every $\tilde{P}^\fd \in \P^\fd$, and so we have 
$$
\u{Q}^\fd(F|K) = \min\left\{\frac{\tilde{P}^\fd(F \wedge K|A)}{\tilde{P}^\fd(F \wedge K|A) + \tilde{P}^\fd(F^c \wedge K|A)} \,:\, \tilde{P}^\fd \in \P^\fd\right\}.
$$
The conclusion follows since the real function $\frac{x}{x+y}$ is increasing in $x$ and decreasing in $y$, so
the minimum is attained in correspondence of
$\frac{\u{P}^\fd(F \wedge K|A)}{\u{P}^\fd(F \wedge K|A) + U^\fd(F^c,K;A)}$ or
$\frac{L^\fd(F,K;A)}{L^\fd(F,K;A) + \o{P}^\fd(F^c \wedge K|A)}$.

Otherwise, for all $A \in \A_\L^0$ with $K \subseteq A$ it holds $\u{P}^\fd(K|A) = 0$, which implies for every such $A$ the existence of
$\tilde{P}^\fd_A \in \P^\fd$ such that $\tilde{P}^\fd_A(K|A) = 0$ and so $\tilde{P}^\fd_A(K|B) = 0$ for every $B \in \A_\L^0$ with
$A \subseteq B$.

We show the existence of $\tilde{P}^\fd_0 \in \P^\fd$ such that $\tilde{P}^\fd_0(K|A) = 0$ for all $A \in \A_\L^0$ with $K \subseteq A$. By Lemma~\ref{lem:compact-fully-dis}, $\P^\fd$ is a compact subset of $[0,1]^{\A \times \A_\L^0}$ endowed with the product topology of pointwise convergence, thus $\P^\fd$ is a compact space with the relative topology inherited by $[0,1]^{\A \times \A_\L^0}$. In turn, the compactness of $\P^\fd$ is equivalent to the fact that every family of
non-empty closed subsets of $\P^\fd$ with the finite intersection property has non-empty intersection.

For an arbitrary finite subalgebra $\B_\L \subseteq \A_\L$ define
$K^*_{\B_\L} = \bigwedge \{B \in \B_\L^0 \,:\, K \subseteq B\}$, which belongs to $\B_\L^0$ since $\B_\L$ is finite.
Introduce the collection
$$
\EE_0 = \left\{\D_0^{\B_\L} = \left\{\tilde{P}^\fd \in \P^\fd \,:\, \tilde{P}^\fd(K|K_{\B_\L}^*) = 0\right\} \,:\, \mbox{$\B_\L \subseteq \A_\L$, $\card \B_\L < \aleph_0$}\right\},
$$
which is easily seen to be a family of non-empty closed subsets of $\P^\fd$.

We show that $\EE_0$ has the finite intersection property.
For any ${\B_\L}_1,\ldots,{\B_\L}_n$ finite subalgebras of $\A_\L$, the corresponding generated Boolean algebra $\B_\L' = \langle\bigcup_{i=1}^n {\B_\L}_i\rangle$ is still a finite subalgebra of $\A_\L$, moreover,
$K_{\B_\L'}^* \subseteq K_{{\B_\L}_i}^*$ for $i=1,\ldots,n$, and so $K|K_{{\B_\L}_i}^* \gn K|K_{\B_\L'}^*$ for $i=1,\ldots,n$. Hence, for every $\tilde{P}^\fd \in \D_0^{\B_\L'}$ we have $\tilde{P}^\fd(K|K_{\B_\L'}^*) = 0$ and by the monotonicity of $\tilde{P}^\fd$ with respect to $\gn$ relation, it follows $\tilde{P}^\fd(K|K_{{\B_\L}_i}^*) = 0$ for $i=1,\ldots,n$, and so $\tilde{P}^\fd \in \D_0^{{\B_\L}_i}$ for $i=1,\ldots,n$.
This implies $\bigcap_{i=1}^n \D_0^{{\B_\L}_i} \neq \emptyset$ and so $\EE_0$ satisfies the finite intersection property which, in turn, implies $\bigcap \EE_0 \neq \emptyset$, i.e., there exists $\tilde{P}^\fd_0 \in \bigcap \EE_0$ such that $\tilde{P}^\fd_0(K|A) = 0$ for every $A \in \A_\L^0$ with $K \subseteq A$.

Finally, Lemma~\ref{lem:full-fully-L-dis} implies $\u{Q}^\fd(F|K) = \u{Q}_{\tilde{P}^\fd_0}(F|K) = 0$.
\end{proof}

\begin{proof}[Proof of Theorem~\ref{th:fully-L-dis-fcp}]
The proof follows by Lemma~\ref{lem:lower-fully-L-dis} and Lemma~\ref{lem:lower-fully-L-dis-full}.
\end{proof}

\section{Proofs for Section~5}
\label{appendix2}

\begin{proof}[Proof of Proposition~\ref{prop:bounds-sc}]
We prove only the first inequality as the other has similar proof. By Theorem~\ref{th:joint}, for every finite partition $\L^\F = \{A_u\}_{u=1}^m = \{H_{i_h}\}_{h=1}^n \cup \{B_k\}_{k=1}^t$ such that $m = n + t$ and $\L^\F \subseteq \A_\L$ it holds
\begin{eqnarray*}
&&\sum_{h=1}^n \sigma(F|H_{i_h})\pi(H_{i_h}) + \sum_{B_k \subseteq F}\pi(B_k)\\
&&=\sum_{h=1}^n \left(\inf_{H_i \subseteq H_{i_h}}\sigma(F|H_i)\right)\pi(H_{i_h}) + \sum_{B_k \subseteq F}\left(\inf_{H_i \subseteq B_k}\sigma(F|H_i)\right)\pi(B_k)\\
&&\le \sum_{h=1}^n \left(\inf_{H_i \subseteq H_{i_h}}\sigma(F|H_i)\right)\pi(H_{i_h}) + \sum_{k=1}^t\left(\inf_{H_i \subseteq B_k}\sigma(F|H_i)\right)\pi(B_k).
\end{eqnarray*}
Hence, since both $\u{P}^\j(F)$ and $\u{\int}\sigma(F|H_i)\pi(\d H_i)$ are computed taking the supremum over all the finite partitions $\L^\F \subseteq \A_\L$ the conclusion follows.
\end{proof}

\begin{proof}[Proof of Theorem~\ref{th:sc}]
First we prove $\P^\sc$ is not empty. The prior probability $\pi$ can be extended to $\langle\L\rangle^*$ giving rise to a set of finitely additive probabilities $\W = \{\tilde{\nu(\cdot)}\}$
whose lower and upper envelopes $\u{\nu} = \min \W$ and $\o{\nu} = \max \W$ are, respectively, a totally monotone and a totally alternating capacity (see, e.g., \cite{cpv-sma,denneberg-cond,choquet}) and coincide with the inner and outer measure induced by $\pi$ \cite{bhaskara}, i.e., $\W = \P_{\pi_*}$. For every $\tilde{\nu} \in \W$, define the function $\tilde{P}$ setting for every $F \in \A$
$$
\tilde{P}(F) = \int\sigma(F|H_i)\tilde{\nu}(\d H_i),
$$
which is seen to be a finitely additive probability on $\A$ consistent with $\{\pi,\sigma\}$.
We show that $\tilde{P}$ is strongly $\L$-conglomerable. Let $\B = \langle\A \cup \langle\L\rangle^*\rangle$ and $\rho$ be any strategy on $\B \times \L$ extending $\sigma$. The assessment $\{\tilde{\nu},\rho\}$ is coherent and the joint probability $\tilde{R}$ on $\B$ setting for every $A \in \B$
$$
\tilde{R}(A) = \int \rho(A|H_i)\tilde{\nu}(\d H_i),
$$
is consistent with $\{\tilde{\nu},\rho\}$ and is an extension of $\tilde{P}$.
Since for every $A \in \B$, $\rho(A|\cdot)$ is S-integrable with respect to $\tilde{\nu}$, Theorem~1.6 in \cite{brr-bayes} implies that for every $A \in \B$ and $D \in \langle\L\rangle^*$ it holds
$$
\tilde{\nu}(D) \inf_{H_i \subseteq D} \rho(A|H_i) \le \tilde{R}(A \wedge D) \le \tilde{\nu}(D) \sup_{H_i \subseteq D} \rho(A|H_i)
$$
which for every $F \in \A$ and $B \in \A_\L$ reduces to
$$
\pi(B) \inf_{H_i \subseteq B} \sigma(F|H_i) \le \tilde{P}(F \wedge B) \le \pi(B) \sup_{H_i \subseteq B} \sigma(F|H_i).
$$
This implies that the set $\P^\sc = \{\tilde{P} \,:\, \tilde{\nu} \in \W\}$ is not empty.

To prove $\P^\sc$ is compact, it is sufficient to consider a net $(\tilde{P}_\alpha)_\alpha$ in $\P^\sc$ converging pointwise to $\tilde{P}$. The compactness of $\P^\j$ implies $\tilde{P}$ is an element of $\P^\j$, moreover, since the pointwise limits of nets preserve non-strict inequalities, it follows that $\tilde{P}$ is also an element of $\P^\sc$ and the claim follows.

By Proposition~3 in \cite{schmeidler}, for every $F \in \A$ the lower and upper envelopes of the set $\P^\sc$ are given by
\begin{eqnarray*}
\u{P}^\sc(F) &=& 
\intC \sigma(F|H_i)\u{\nu}(\d H_i) = \u{\int}\sigma(F|H_i) \pi(\d H_i),\\
\o{P}^\sc(F) &=& 
\intC \sigma(F|H_i)\o{\nu}(\d H_i) = \o{\int}\sigma(F|H_i) \pi(\d H_i),
\end{eqnarray*}
where the last equality follows in both equations by Theorem~8.25 in \cite{deCooman-book}.
\end{proof}

The proof of Theorem~\ref{th:set-fsc} relies on the following lemma.

\begin{lemma}
\label{lem:cond-inner}
Let $\A$ be a Boolean algebra, $P$ a full conditional probability on $\A$, and $\P = \{\tilde{P}(\cdot|\cdot)\}$ the set of all conditional probabilities extending $P$ on $\langle\A\rangle^* \times \A^0$. The lower envelope $\u{P} = \min \P$ is such that for every $K \in \A^0$, $\u{P}(\cdot|K)$ coincides with the inner measure on $\langle\A\rangle^*$ generated by $P(\cdot|K)$, thus is a totally monotone capacity.
\end{lemma}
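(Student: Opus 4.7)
My plan is to fix $K \in \A^0$ and prove the equality $\u{P}(\cdot|K) = P_*(\cdot|K)$ as capacities on $\langle\A\rangle^*$, where $P_*(\cdot|K)$ denotes the inner measure induced by the finitely additive probability $P(\cdot|K)$ on $\A$. The asserted total monotonicity of $\u{P}(\cdot|K)$ will then follow for free, since inner measures of finitely additive probabilities are totally monotone by the results of \cite{choquet,denneberg-inner} recalled in Section~\ref{sec:preliminaries}. I will split the equality into the two natural inequalities.

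For the routine direction $\u{P}(E|K) \ge P_*(E|K)$, I would use that every $\tilde{P} \in \P$ restricts to a finitely additive probability $\tilde{P}(\cdot|K)$ on $\langle\A\rangle^*$ extending $P(\cdot|K)$, so that monotonicity gives $\tilde{P}(E|K) \ge \tilde{P}(B|K) = P(B|K)$ for every $B \in \A$ contained in $E$; taking the supremum over such $B$ yields the bound.

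For the reverse inequality $\u{P}(E|K) \le P_*(E|K)$, I would invoke Theorem~\ref{th:extension} to write $\u{P}(E|K) = \sup_\F \u{P}^\F(E|K)$ as $\F$ ranges over finite subfamilies of $\A \times \A^0$. After enlarging $\F$ to a form $\A_\F \times \H_\F$ with $\A_\F \subseteq \A$ a finite subalgebra containing $K$ and $\H_\F \ni K$, the atoms of $\langle\A_\F \cup \{E\}\rangle$ refine those of $\A_\F$: each atom $A$ of $\A_\F$ is either contained in $E$, disjoint from $E$, or splits between $E$ and $E^c$. Mimicking the LP-style analysis in the proof of Theorem~\ref{th:ext-prob-lik-inf}\,(ii), I would conclude that the minimum of $\tilde{P}(E|K)$ over the coherent extensions of $P_{|\F}$ to $\F \cup \{E|K\}$ is attained by moving all of the mass $P(A|K)$ of each splitting atom $A \subseteq K$ onto $A \wedge E^c$, producing
\[
\u{P}^\F(E|K) = P(B_\F|K), \qquad B_\F := \bigvee\{A : A \text{ atom of } \A_\F,\ A \subseteq E \wedge K\} \in \A.
\]
Since $B_\F \subseteq E$ this gives $\u{P}^\F(E|K) \le P_*(E|K)$, and conversely, inserting any $B \in \A$ with $B \subseteq E \wedge K$ into $\F$ forces $B \subseteq B_\F$, so the supremum over $\F$ reconstructs $P_*(E|K)$ exactly.

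The hardest step will be justifying that the LP value $P(B_\F|K)$ is actually attained by a genuine coherent extension of the whole restriction $P_{|\F}$ to $\F \cup \{E|K\}$, rather than by some finitely additive extension of the isolated slice $P(\cdot|K)$ to $\langle\A_\F \cup \{E\}\rangle$. Concretely, one must check that the choice $\tilde{P}(A \wedge E|K) = 0$ on the splitting atoms is compatible with the \textbf{(C3)} couplings between $\tilde{P}(\cdot|K)$ and the other slices $\tilde{P}(\cdot|H)$ for $H \in \H_\F$; since those couplings involve only events of $\A_\F$, I expect that each slice $\tilde{P}(\cdot|H)$ can be extended to $\langle\A_\F \cup \{E\}\rangle$ independently of the chosen minimizer at $K$, so the minimum obtained at the level of $K$ is preserved by a full coherent extension.
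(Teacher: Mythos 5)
Your proposal is correct and follows essentially the same route as the paper: both reduce $\u{P}(F|K)$ via Theorem~\ref{th:extension} to finite subalgebras $\B \subseteq \A$ containing $K$, identify $\u{P}^\F(F|K)$ with $P(B|K)$ for the maximal $B \in \B$ below $F$, and recover the inner measure by taking the supremum. The \textbf{(C3)}-compatibility issue you flag as the hardest step is not treated in any more detail by the paper either, which simply relies on the same LP/coherence machinery already invoked in the proof of Theorem~\ref{th:ext-prob-lik-inf}.
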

\begin{proof}[Proof of Lemma~\ref{lem:cond-inner}]
For every $F|K \in \langle\A\rangle^* \times \A^0$, Theorem~\ref{th:extension} implies that we can restrict to finite subfamilies $\F \subseteq \A \times \A^0$ of the form $\F = \B \times \B^0$, with $\B \subseteq \A$ finite subalgebra containing $K$. Let us denote with $B$ the maximal element of $\B$ with respect to implication relation such that $B \subseteq F$.
In turn, this implies
$$
\u{P}(F|K) = \sup\{P(B|K) \,:\, B \subseteq F, B \in \A\},
$$
so $\u{P}(\cdot|K)$ coincides with the inner measure on $\langle\A\rangle^*$ generated by $P(\cdot|K)$ and is therefore a totally monotone capacity.
\end{proof}

\begin{proof}[Proof of Theorem~\ref{th:set-fsc}]
We prove first that $\Q^\fsc$ is not empty. At this aim, consider the set $\P^\p = \{\tilde{\pi}^\p(\cdot|\cdot)\}$ of conditional prior probabilities full on $\A_\L$ extending $\{\pi,\sigma\}$. Remark~\ref{rem:no-strategy} implies that every $\tilde{\pi}^\p$ in $\P^\p$ can be coherently extended to $\langle\L\rangle^* \times \A_\L^0$ without being affected from $\sigma$, obtaining a set $\W^\p = \{\tilde{\nu}^\p(\cdot|\cdot)\}$ of conditional probabilities on $\langle\L\rangle^* \times \A_\L^0$ with lower and upper envelopes $\u{\nu}^\p = \min \W^\p$ and $\o{\nu}^\p = \max \W^\p$.
By Lemma~\ref{lem:cond-inner}, for every $K \in \A_\L^0$, $\u{\nu}^\p(\cdot|K)$ is a totally monotone capacity on $\langle\L\rangle^*$.

Thus, for a fixed $\tilde{\pi}^\p$ in $\P^\p$, for every $K \in \A_\L^0$ the proof goes along the same line of the proof of Thereom~\ref{th:sc} using $\tilde{\pi}^\p(\cdot|K)$ in place of $\pi(\cdot)$, and the claim follows.

To prove $\Q^\fsc$ is compact, it is sufficient to consider a net $(\tilde{Q}_\alpha)_\alpha$ in $\Q^\fsc$ converging pointwise to $\tilde{Q}$. Denote with $(\tilde{\pi}^\p_\alpha)_\alpha$ and ${\tilde{\pi}^\p}$ the restrictions of $(\tilde{Q}_\alpha)_\alpha$ and $\tilde{Q}$ on $\A_\L \times \A_\L^0$, respectively.
Theorem~5 in \cite{regazzini} implies that $\tilde{Q}$ is a full conditional probability on $\A$ extending $\{\pi,\sigma\}$.
For every $\alpha$, $F|K \in \A \times \A_\L^0$ and $B \in \A_\L$ it holds
$$
\tilde{\pi}^\p_\alpha(B|K)\inf_{H_i \subseteq B \wedge K} \sigma(F|H_i) \le \tilde{Q}_\alpha(F \wedge B|K) \le \tilde{\pi}^\p_\alpha(B|K)\sup_{H_i \subseteq B \wedge K} \sigma(F|H_i),
$$
and since the pointwise limits of nets preserve non-strict inequalities, it follows
$$
\tilde{\pi}^\p(B|K)\inf_{H_i \subseteq B \wedge K} \sigma(F|H_i) \le \tilde{Q}(F \wedge B|K) \le \tilde{\pi}^\p(B|K)\sup_{H_i \subseteq B \wedge K} \sigma(F|H_i),
$$
that is, $\tilde{Q}$ is an element of $\Q^\fsc$ and the claim follows.
\end{proof}

\begin{proof}[Proof of Theorem~\ref{th:lower-fsc}]
The proof easily follows by the proof of Theorem~\ref{th:set-fsc}.
\end{proof}

\section*{Acknowledgements}
\noindent This work was partially supported by INdAM-GNAMPA through the Project 2015 {\tt U2015/000418} {\it ``Envelopes of coherent conditional probabilities and their applications to statistics and artificial intelligence''}, and by the Italian Ministry of Education, University and
Research, funding of Research Projects of National Interest (PRIN 2010-11) under the grant {\tt 2010FP79LR\_003}
{\it ``Logical methods of information management''}.

\bibliographystyle{plain}
\bibliography{biblio.bib}














\end{document}